\newtheorem{defin}{Definition}
\newtheorem{lemma}{Lemma}
\newtheorem{thm}{Theorem}
\newtheorem{remark}{Remark}
\newtheorem{prop}{Proposition}
\newtheorem{corl}{Corollary}
\def\Z{\mathbb Z}
\newcommand{\N}{\mathbb N}
\newcommand{\R}{\mathbb R}
\newcommand{\Q}{\mathbb Q}
\begin{document}
\title{Tanaka's equation on the circle and stochastic flows}
\date{\today}
\maketitle
\begin{center}
\renewcommand{\thefootnote}{(\arabic{footnote})}
  \scshape Hatem Hajri\footnote{Universit\'e du Luxembourg, Email: Hatem.Hajri@uni.lu}
  and Olivier Raimond\footnote{Universit\'e Paris Ouest Nanterre La D\'efense, Email: oraimond@u-paris10.fr}
\renewcommand{\thefootnote}{\arabic{footnote}}\setcounter{footnote}{0}
\end{center}
\hglue0.02\linewidth\begin{minipage}{0.9\linewidth}
\end{minipage}

\maketitle





\begin{abstract}
  We define a Tanaka's equation on an oriented graph with two edges and two vertices. This graph will be embedded in the unit circle. Extending this equation to flows of kernels, we show that the laws of the flows of kernels $K$ solutions of Tanaka's equation can be classified by pairs of probability measures $(m^+,m^-)$ on $[0,1]$, with mean $1/2$. What happens at the first vertex is governed by $m^+$, and at the second by $m^-$. For each vertex $P$, we construct a sequence of stopping times along which the image of the whole circle by $K$ is reduced to $P$. We also prove that the supports of these flows contain a finite number of points, and that except for some particular cases this number of points can be arbitrarily large.
\end{abstract}

\maketitle

\section{Introduction}

Consider Tanaka's equation
\begin{equation}\label{cc}
\varphi_{s,t}(x)=x+\int_{s}^t \textrm{sgn}(\varphi_{s,u}(x))dW_{u},\ \ s\leq t,\; x\in\R,
\end{equation}
where $(W_{t})_{t\in\R}$ is a Brownian motion on $\R$ (that is $(W_{t})_{t\geq 0}$ and $(W_{-t})_{t\geq 0}$ are two independent standard Brownian motions) and $\varphi=(\varphi_{s,t};\;s\le t)$ is a stochastic flow of mappings on $\R$. We refer to \cite{MR2060298} for a precise definition. Roughly, $\varphi_{s,t}$ and $\varphi_{0,t-s}$ are equal in law, for any sequence $\{[s_i,t_i], 1\le i\le n\}$ of non-overlapping intervals the mappings $\varphi_{s_i,t_i}$ are independent, and we have the flow property: for all $x\in\R$, $s\le t\le u$, a.s. $\varphi_{s,u}(x)=\varphi_{t,u}\circ \varphi_{s,t}(x)$. 
In \cite{MR2235172}, (\ref{cc}) is extended (\ref{cc}) to flows of kernels. A stochastic flow of kernels $K=(K_{s,t};\;s\le t)$ is the same as a stochastic flow of mappings, but the mappings are replaced by kernels, and the flow property being now that  for all $x\in\R$, $s\le t\le u$, a.s. $K_{s,u}(x)=K_{s,t}K_{t,u}(x)$  (with the usual composition of kernels). 
For $x\in\R$ and $s\le t$, $K_{s,t}(x)$ is a probability measure on $\R$ which describes the transport by the flow of a Dirac measure at $x$ from time $s$ to time $t$. A simple example of flow of kernels is $K_{s,t}(x)=\delta_{\varphi_{s,t}(x)}$, where $\varphi$ is a stochastic flow of mappings.  

By applying It\^o's formula, it is easy to see that $(\varphi,W)$ solves (\ref{cc}) if and only if, setting $K=\delta_{\varphi}$, we have for all $s\leq t$, $x\in\R$ and $f\in C^2_b(\R)$ ($f$ is $C^2$ on $\R$ and $f',f''$ are bounded), a.s.
\begin{equation}\label{dd}
K_{s,t}f(x)=f(x)+\int_s^tK_{s,u}(f'\textrm{sgn})(x)dW_{u}+\frac{1}{2}\int_s^t K_{s,u}f''(x)du.
\end{equation}
Now, if $K$ is a stochastic flow of kernels and $W$ is a Brownian motion on $\R$, we will say that $(K,W$) solves Tanaka's equation if and only if \eqref{dd} holds for all  $s\leq t$, $x\in\R$ and $f\in C^2_b(\R)$. To give an intuitive meaning of this SDE, the transport by a solution $K$ is governed by $W$ on $]0,\infty[$ and by $-W$ on  $]-\infty,0[$, but with possible splitting at $0$. We will also be interested in diffusive solutions of Tanaka's equation, i.e. solutions $K$ that cannot be written in the form $\delta_{\varphi}$. 
The main result of \cite{MR2235172} is a one-to-one correspondence between probability measures $m$ on $[0,1]$ with mean $\frac{1}{2}$ and laws of solutions to (\ref{dd}). Denote by $\mathbb P^m$, the law of the solution $(K,W)$ associated to $m$. Then
$$K_{s,t}(x)=\delta_{x+\textrm{sgn}(x)W_{s,t}} 1_{\{t\leq \tau_{s,x}\}}+(U_{s,t}\delta_{W^+_{s,t}}+(1-U_{s,t})\delta_{-W^+_{s,t}}) 1_{\{t>\tau_{s,x}\}}$$
where $W_{s,t}=W_t-W_s,\ W_{s,t}^+=W_{t}-\displaystyle\inf_{u\in[s,t]}W_{u}=W_{s,t}-\displaystyle\inf_{u\in[s,t]}W_{s,u},$
$$\tau_{s,x}=\inf\{t\geq s: W_{s,t}=-|x|\}$$
and where $U_{s,t}$ is independent of $W$, with law $m$. In particular, when $m=\delta_{\frac{1}{2}}$, then $U_{s,t}=\frac{1}{2}$ and $K$ is $\sigma(W)$-measurable; this is also the unique $\sigma(W)$-measurable solution of (\ref{dd}).
For $m=\frac{1}{2}(\delta_0+\delta_1)$, we recover the unique flow of mappings solving (\ref{cc}) which was firstly introduced in \cite{MR1816931}. In \cite{MR2835247}, a more general Tanaka's equation has been defined on a graph related to Walsh's Brownian motion. In this work, we deal with another simple oriented graph with two edges and two vertices that will be embedded in the unit circle $\mathscr C=\{z\in\mathbb C : |z|=1\}$.\\
A function $f$ defined on $\mathscr C$ is said to be derivable in $z_0\in\mathscr C$ if
$$f'(z_0):=\lim_{h\rightarrow0}\frac{f(z_0 e^{ih})-f(z_0)}{h}$$
exists.
Let $C^2(\mathscr C)$ be the space of all functions $f$ defined on $\mathscr C$ having first and second continuous derivatives $f'$ and $f''$. Let $\mathcal{P}(\mathscr C)$ be the space of all probability measures on $\mathscr C$ and $(f_n)_{n\in\N}$ be a sequence of functions dense in $\{f\in C(\mathscr C), ||f||_{\infty}\leq 1\}$. We equip $\mathcal{P}(\mathscr C)$ with the following distance $d$ and its associated Borel $\sigma$-field:
\begin{equation}\label{sare}
d(\mu,\nu)=\left(\sum_{n}2^{-n}\left(\int f_nd\mu-\int f_nd\nu\right)^2\right)^{\frac{1}{2}} \ \textrm{with}\ \mu, \nu\in \mathcal{P}(\mathscr C).
\end{equation}
In the following, $\arg(z)\in[0,2\pi[$ denotes the argument of $z\in\mathbb C$ and in all the paper $l$ is a fixed parameter in $]0,\pi]$. Define for $z\in\mathscr C$, $$\epsilon(z)={1}_{\{arg(z)\in[0,l]\}}-1_{\{arg(z)\in]l,2\pi[\}}$$
and denote by $\mathscr C_l$ (or simply by $\mathscr C$ since $l$ will not vary) the graph embedded in $\mathscr C$ with two vertices $1$ and $e^{il}$ and two edges
$\mathscr C^+=\{z\in\mathscr C : arg(z)\in]0,l[\}$ and $\mathscr C^-=\mathscr C\setminus\mathscr C^+$ with orientation given by $\varepsilon$ (see Figure 1 below).

\begin{figure}[h]
\begin{center}
\resizebox{4cm}{4cm}{\input{fig2_article3.pstex_t}}
\caption{The graph $\mathscr C$.}
\end{center}
\end{figure}

\begin{defin}
On a probability space $(\Omega,\mathcal A,\mathbb P)$, let $W$ be a Brownian motion on $\R$ and $K$ be a stochastic flow of kernels on $\mathscr C$. We say that $(K,W)$ solves Tanaka's equation on $\mathscr C$ denoted $(T_{\mathscr C})$ if for all $s\leq t$, $f\in C^2(\mathscr C)$ and $x\in \mathscr C$, as.
\begin{equation}\label{jhk}
K_{s,t}f(x)=f(x)+\int_s^tK_{s,u}(\epsilon f')(x)dW_{u} + \frac{1}{2}\int_s^tK_{s,u}f''(x)du.
\end{equation}
If $(K,W)$ is a solution of $(T_{\mathscr C})$ and $K=\delta_\varphi$ with $\varphi$ a stochastic flow of mappings, we simply say that $(\varphi,W)$ solves $(T_{\mathscr C})$.
\end{defin}

\noindent If $(K,W)$ is a solution of $(T_{\mathscr C})$, then following Lemma 3.1 of \cite{MR2235172}, we have $\sigma(W)\subset\sigma(K)$ (see Lemma \ref{rz} (ii) below). So we will simply say that $K$ solves $(T_{\mathscr C})$.\\
In this paper, given two probability measures on $[0,1]$, $m^+$ and $m^-$ with mean $\frac{1}{2}$, we construct a flow $K^{m^+, m^-}$ solution of $(T_{\mathscr C})$. Let $(K^+,K^-,W)$ be such that given $W$, the flows $K^+$ and $K^-$ are independent and $(K^{\pm},\pm W)$ has for law $\mathbb P^{m^{\pm}}$. The flows $K^+$ and $K^-$ provide the additional randomness when $K^{m^+,m^-}$ passes through $1$ or $e^{il}$. Away from these two points, $K^{m^+,m^-}$ just follows $W$ on $\mathscr C^+$ and $-W$ on $\mathscr C^-$. We now state our first result.
\begin{thm}\label{a}  
\begin{itemize}
\item[(1)]  Let $m^+$ and $m^-$ be two probability measures on $[0,1]$ satisfying
\begin{equation}\label{mp}
\int_{0}^{1} u\ m^+ (du)=\int_{0}^{1} u\ m^- (du)=\frac{1}{2}.
\end{equation}
There exist a stochastic flow of kernels (unique in law) $K^{m^+, m^-}$ and a Brownian motion $W$ on $\R$ such that $(K^{m^+, m^-},W)$ solves $(T_{\mathscr C})$ and such that if $W_{s,t}^+=W_{t}-\displaystyle\inf_{u\in[s,t]}W_{u}$,  $W^{-}_{s,t}=\displaystyle\sup_{u\in[s,t]}W_{u}-W_t$ and
$$\rho_s=\inf\{t\geq s,\ \sup(W^{+}_{s,t},W^{-}_{s,t})=l\},$$
then conditionally to $\{s\leq t<\rho_s\}$, a.s.
\begin{eqnarray}
K^{m^+,m^-}_{s,t}(1)&=&U^+_{s,t}\delta_{\exp(iW^+_{s,t})}+(1-U^{+}_{s,t})\delta_{\exp(-iW^+_{s,t})},\nonumber\\
K^{m^+,m^-}_{s,t}(e^{il})&=&U^-_{s,t}\delta_{\exp(i(l+W^-_{s,t}))}+(1-U^{-}_{s,t})\delta_{\exp(i(l-W^-_{s,t}))}\nonumber\
\end{eqnarray}
and conditionally to $\{s\leq t<\rho_s\}$, $(U^+_{s,t},U^-_{s,t})$ is independent of $W$ and has for law $m^+\otimes m^-$ .\\
\item[(2)]  For all flow $K$ solution of $(T_{\mathscr C})$, there exists a unique pair of probability measures $(m^+,m^-)$ satisfying (\ref{mp}) such that $K\overset{law}{=}K^{m^+,m^-}$.
\end{itemize}
\end{thm}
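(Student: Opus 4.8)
The plan is to prove (1) by an explicit construction of $K^{m^+,m^-}$ obtained by gluing two real-line Tanaka flows along $\mathscr C$, and to prove (2) together with the uniqueness in law of (1) by reducing to the $n$-point motions of an arbitrary solution and showing that these are governed by exactly two scalar data, one attached to each vertex.

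\textbf{Construction.} On a suitable space carry a Brownian motion $W$ and, conditionally on $W$, two independent stochastic flows of kernels $K^+,K^-$ on $\R$ such that $(K^\pm,\pm W)$ has law $\mathbb P^{m^\pm}$; this is possible by \cite{MR2235172} since $m^\pm$ has mean $1/2$. Define $K=K^{m^+,m^-}$ by transporting mass along $\mathscr C$: while a portion of mass lies in the interior of an edge its argument evolves by $\theta\mapsto\theta+\epsilon\,W_{s,\cdot}$, a deterministic functional of $W$; when mass reaches the vertex $1$ (resp.\ $e^{il}$), its subsequent splitting and motion, read in the chart $z\mapsto\arg z$ near $1$ (resp.\ $z\mapsto\arg z-l$ near $e^{il}$) identifying a neighbourhood of the vertex with a neighbourhood of $0\in\R$, is prescribed by $K^+$ driven by $W$ (resp.\ by $K^-$ driven by $-W$, the sign change being precisely the flip of $\epsilon$ across $e^{il}$). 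Any transfer of mass from one vertex to the other forces $W$ to oscillate by at least $\min(l,2\pi-l)=l$, so only finitely many occur on a finite interval and the procedure is well defined. One then checks that $K$ is a stochastic flow of kernels — measurability and the cocycle identity $K_{s,u}=K_{s,t}K_{t,u}$ are inherited from $K^\pm$, and independence over disjoint time intervals together with stationarity follow from those of $(K^\pm,W)$ — and that $(K,W)$ solves $(T_{\mathscr C})$, by applying the It\^o--Tanaka formula to $f(Z_t)$, $f\in C^2(\mathscr C)$, for a tagged particle $Z$: in edge interiors $d\arg Z=\epsilon(Z)\,dW$, while at the vertices the local Tanaka structure of $K^\pm$ produces no extra local-time term because $f'$ is the genuine circle derivative, so $\epsilon f'$ has no jump across the vertices. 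Finally, before $\rho_s$ neither $W^+_{s,\cdot}$ nor $W^-_{s,\cdot}$ has reached $l$, hence the mass issued from $1$ has not yet reached $e^{il}$ and conversely; reading in the two charts the real-line formula of $\mathbb P^{m^\pm}$ (with hitting time $\tau_{s,0}=s$, so only the ``universal'' measure survives) yields the displayed expressions for $K_{s,t}(1)$ and $K_{s,t}(e^{il})$ and the stated conditional law $m^+\otimes m^-$ of $(U^+_{s,t},U^-_{s,t})$.

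\textbf{Uniqueness in law and classification.} Let $(K,W)$ solve $(T_{\mathscr C})$; recall $\sigma(W)\subset\sigma(K)$. Stopping the flow before its image reaches $e^{il}$ and reading it in the chart at $1$, equation $(T_{\mathscr C})$ becomes the real-line Tanaka equation \eqref{dd} driven by $W$; by the classification of \cite{MR2235172}, the behaviour of $K$ near $1$ is then governed by a unique probability measure $m^+$ on $[0,1]$ of mean $1/2$, namely the common law of the splitting ratios over the excursions of the reflected driving Brownian motion, and likewise by some $m^-$ near $e^{il}$ (with driving Brownian motion $-W$). It remains to see that $(m^+,m^-)$ is a complete invariant. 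For this, fix $W$: the $n$-point motion $(X^1,\dots,X^n)$ of $K$ on $\mathscr C^n$ then follows, between vertex visits, the deterministic dynamics $d\arg X^i=\epsilon(X^i)\,dW$; at $1$ the incoming particles split according to $m^+$, at $e^{il}$ according to $m^-$, and — again transporting the real-line result to the charts — the two families of splitting variables are conditionally independent given $W$. Hence the law of the whole family of $n$-point motions of $K$ depends only on $(m^+,m^-)$, and since a stochastic flow of kernels is determined in law by its $n$-point motions we get $K\overset{law}{=}K^{m^+,m^-}$, which is the uniqueness in (1). Part (2) follows: the construction of (1) realises every admissible $(m^+,m^-)$, and distinct pairs give distinct laws since the conditional law of $(U^+_{0,t},U^-_{0,t})$ on $\{t<\rho_0\}$ recovers $m^+\otimes m^-$.

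\textbf{Main obstacle.} The delicate point is the passage from ``$(T_{\mathscr C})$ is locally the real Tanaka equation near each vertex'' to ``the behaviour at the vertex is governed by a single measure on $[0,1]$, and the two vertices are coupled only through $W$''. This requires transplanting to the charts the exchangeability/martingale-problem argument of \cite{MR2235172} that the splitting ratios of the incoming mass are i.i.d.\ across the excursions of the reflected driving Brownian motion with common law $m^\pm$ — while accounting for the fact that different particles reach a vertex at different times yet must split consistently — and then verifying that no additional randomness links the two vertices. Establishing this, and checking the global-in-time well-posedness of the gluing in the construction, are the parts that need genuine work; the remainder follows the template of \cite{MR2235172}.
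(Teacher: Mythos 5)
Your plan follows the paper's route very closely: part (1) is built by gluing two real-line flows $K^\pm$ (with laws $\mathbb P^{m^\pm}$) across the two vertices, and part (2) is a classification argument that reads $(T_{\mathscr C})$ in a chart at each vertex, extracts a splitting measure, and then argues that the pair $(m^+,m^-)$ determines the law. The displayed formulas for $K_{s,t}(1)$ and $K_{s,t}(e^{il})$ on $\{t<\rho_s\}$ and the product structure $m^+\otimes m^-$ are exactly what the paper's Section 2 produces, and the remark that $\epsilon f'$ produces no local-time contribution (because $y\mapsto f(e^{iy})$ is $C^2$ on $\R$) is the right way to check the SDE.

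What your sketch glosses over — and what the paper spends most of Section 3 doing — is precisely the content of your ``Main obstacle,'' but there is one missing ingredient you do not mention. You cannot ``fix $W$'' and then read off $m^\pm$ from the $n$-point motions directly, because you first need a way to see the driving noise inside $K$, and in particular to relate the hitting time $\rho^+$ and the support of $K_{0,t}(1)$ to measurable functionals of $K$. The paper gets this by first proving uniqueness of the \emph{Wiener} solution via a Wiener-chaos expansion (Lemma \ref{kwir} and its iteration), which yields the key identity $K^W_{s,t}(z)=E[\delta_{\varphi^c_{s,t}(z)}\mid\sigma(W)]$ for the coalescing flow $\varphi^c$ built from the $n$-point motions of $K$. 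Only then can one deduce $\varphi^c_t(1)\in\{e^{iW^+_t},e^{-iW^+_t}\}$ for $t\le\rho^+$ and express $\rho^+$ as a stopping time of $K$ itself, which is the entry point for the excursion-theoretic identification of $m^+$ (Lemmas \ref{r}, \ref{uhb}, \ref{hub}, \ref{bhu}, \ref{buh}) and for the conditional independence of $U^+$ and $U^-$ given $W$ (Proposition \ref{wouk}(ii), proved by a decomposition over $\{g^-_t<s<g^+_t\}$). Your $n$-point-motion phrasing of the final identification is morally the same as the paper's time-partitioning argument (Proposition \ref{jhkh}), but without the Wiener-solution step and the coalescing-flow filtering the chart-reading claim ``by the classification of \cite{MR2235172}, the behaviour near $1$ is governed by a unique $m^+$'' does not yet make sense, since that classification is for flows on all of $\R$ for all time, not for a flow that is stopped at a random time and wrapped around a circle. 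So: right architecture, but the Wiener-chaos/coalescing-flow scaffolding is a genuinely necessary extra layer, not just bookkeeping.
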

\noindent Contrary to Tanaka's equation, where flows are concentrated on at most two points, flows associated to $(T_{\mathscr C})$ have nontrivial supports. The version $(K^{m^+, m^-},W)$ defined in Theorem \ref{a} (1), and constructed in Section 2, satisfies Proposition \ref{telv} and Proposition \ref{nahi} below. Proposition \ref{telv} shows the existence of some times at which the support of $K^{m^+, m^-}$ is only concentrated on a single point. For all $-\infty\leq s\leq t\leq +\infty$, let
\begin{equation}\label{jij}
\mathcal F^W_{s,t}=\sigma(W_{u,v},\ s\leq u\leq v\leq t).
\end{equation}
\begin{prop}\label{telv}
\begin{itemize}
\item[(1)] There exists an increasing sequence $(S_k)_{k\geq1}$ of $(\mathcal F^W_{0,t})_{t\geq0}$-stopping times such that a.s.  $\lim_{k\rightarrow\infty} S_k=+\infty$ and $K^{m^+,m^-}_{0,S_k}(z)=\delta_{e^{il}}$ for all $z\in\mathscr C$ and all $k\geq 1$.
\item[(2)] There exists an increasing sequence $(T_k)_{k\geq1}$ of $(\mathcal F^W_{0,t})_{t\geq0}$-stopping times such that a.s. $\lim_{k\rightarrow\infty} T_k=+\infty$ and $K^{m^+,m^-}_{0,T_k}(z)=\delta_{1}$ for all $z\in\mathscr C$ and all $k\geq 1$.
\end{itemize}
\end{prop}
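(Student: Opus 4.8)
I would first reduce the proposition to the existence, for the vertex $e^{il}$, of a single $(\mathcal F^W_{0,t})$-stopping time $\tau$, depending only on $W$, which is a.s.\ finite and for which a.s.\ $K^{m^+,m^-}_{0,\tau}(z)=\delta_{e^{il}}$ for every $z\in\mathscr C$. Granting such a $\tau$, one iterates: put $S_1=\tau$, and given $S_k$ apply the strong Markov property at $S_k$ — the shifted family $(K_{S_k+s,\,S_k+t})_{s\le t}$ is again a solution of $(T_{\mathscr C})$, driven by $(W_{S_k+t}-W_{S_k})_t$ and, conditionally on $\mathcal F^W_{0,S_k}$, of the same law — so there is a stopping time $\tau^{(k)}>0$ of the shifted filtration with $K_{S_k,\,S_k+\tau^{(k)}}(z)=\delta_{e^{il}}$ for all $z$; set $S_{k+1}=S_k+\tau^{(k)}$. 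Each $S_k$ is an $(\mathcal F^W_{0,t})$-stopping time, and by the flow property $K_{0,S_{k+1}}(z)=K_{S_k,S_{k+1}}K_{0,S_k}(z)=K_{S_k,S_{k+1}}(e^{il})=\delta_{e^{il}}$. Since $W$ has stationary independent increments the $\tau^{(k)}$ are i.i.d.\ positive random variables distributed as $\tau$, whence $S_k=\sum_{j<k}\tau^{(j)}$ is strictly increasing with $S_k\to+\infty$ a.s. The sequence $(T_k)$ for the vertex $1$ is produced by the same scheme with the symmetric statement: a point of $\mathscr C$ is also at intrinsic distance (argument travelled along $\mathscr C^\pm$) at most $2\pi-l$ from $1$, every point drifts toward $1$ when $W$ \emph{decreases}, and the mass at $1$ (governed now by $K^+$) is held there while $W$ sits at a running minimum.

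To obtain $\tau$ I would work with the construction of $K^{m^+,m^-}$ from $(W,K^+,K^-)$ given in Section~2: on the open arcs the flow moves a point's argument like $+W$ on $\mathscr C^+$ and like $-W$ on $\mathscr C^-$, at the vertex $1$ it is the Tanaka flow $K^+$ (driven by $W$) and at $e^{il}$ the Tanaka flow $K^-$ (driven by $-W$). From the explicit form of a Tanaka flow recalled in the Introduction, every point whose argument is within distance $a$ of the sticky vertex has coalesced with the cluster at that vertex as soon as the driving motion has moved by $a$ away from its running extremum, and this cluster equals $\delta$ at the vertex whenever the driving motion is at a record value (for $e^{il}$ the cluster is $U^-\delta_{e^{i(l+W^-_{0,\cdot})}}+(1-U^-)\delta_{e^{i(l-W^-_{0,\cdot})}}$ with $W^-_{0,t}=\sup_{[0,t]}W-W_t$, equal to $\delta_{e^{il}}$ iff $W^-_{0,t}=0$). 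Because every point of $\mathscr C$ is at intrinsic distance at most $2\pi-l$ from $e^{il}$ (the extreme point being the one of $\mathscr C^-$ next to $1$) and every point drifts toward $e^{il}$ when $W$ increases, the natural candidate is a hitting time at which $W$ has risen by at least $2\pi-l$ and simultaneously sits at a record maximum — the first passage $\tau_0=\inf\{t\ge0:W_t=W_0+(2\pi-l)\}$ being the simplest such time. Using the two Tanaka-flow facts above together with the flow property (two starting points with coinciding images at some time have coinciding images afterwards), one then checks that at the chosen time all of $\mathscr C$, and the whole mass standing at either vertex, has been brought to $e^{il}$.

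The delicate point — and what I expect to be the main obstacle — is the bookkeeping of mass that passes between the two vertices before $\tau$: a piece sitting at $e^{il}$ can be pushed, during a downward fluctuation of $W$, back onto $\mathscr C^+$ or $\mathscr C^-$ and reach $1$, where it is handed over to $K^+$; one must then follow it out of $1$ — where the next upward move of $W$ drives it toward $e^{il}$ along both arcs — and verify it is recollected at $e^{il}$ by time $\tau$. Each such round-trip consumes a definite amount of motion of $W$ (a fall of $l$ below a running maximum, then a rise of $2\pi-l$ above a running minimum), so only finitely many occur on $[0,\tau]$; forcing the \emph{last} of them to be completed exactly by time $\tau$ is what fixes the precise form of $\tau$ — in general one must enlarge the threshold beyond $2\pi-l$, or take $\tau$ to be a finite iteration of first-passage times matching the worst-case chain of hand-overs — and the finiteness of the support of $K$ (established elsewhere in the paper) is what makes that chain finite. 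Finally, to pass from an arbitrary configuration at an intermediate, non-adapted instant (such as the last visit of $W$ to its running minimum before $\tau$) to the stated conclusion, one uses the flow property $K_{0,\tau}=K_{v,\tau}K_{0,v}$ along deterministic times $v$ increasing to that instant together with the right-continuity of the flow, or tracks $K_{0,t}(z)$ through the construction directly. The remaining verifications — the exact choice of $\tau$ and the case analysis of the vertex dynamics — are routine but somewhat lengthy.
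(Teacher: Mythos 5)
Your iteration scheme at the very end (produce one good $\sigma(W)$-measurable stopping time $\tau$, then iterate via the strong Markov property and the flow identity $K_{0,S_{k+1}}=K_{S_k,S_{k+1}}K_{0,S_k}$) is fine, and the reduction of the proposition to finding such a $\tau$ is sound. The gap is in the construction of $\tau$ itself, which is where the whole argument lives.

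The candidate $\tau_0=\inf\{t\ge 0: W_t=2\pi-l\}$ does not work almost surely, for exactly the reason you raise yourself: during a downward fluctuation of $W$ before $\tau_0$, mass sitting at $e^{il}$ can be expelled onto an arc, reach $1$, split there under $K^+$, and reach $e^{il}$ again only partially by time $\tau_0$. Your two suggested repairs do not close this. Enlarging the threshold to any fixed $a>2\pi-l$ still fails with positive probability by the same mechanism (just look at the last fluctuation before $T_a$). And there is no ``finite iteration of first-passage times matching the worst-case chain of hand-overs'', because there is no worst case: Proposition~\ref{nahi} shows the support of $K_{0,\cdot}(1)$ can contain arbitrarily many points with positive probability, so the number of hand-overs on $[0,T_a]$ is almost surely finite but unbounded, and no deterministic finite scheme of first-passage times covers all of them. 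In short, the step you describe as routine is the non-routine step.

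The paper sidesteps the hand-over bookkeeping entirely by working on a positive-probability event rather than almost surely, and then applying Borel--Cantelli. Two ideas you are missing. First, one does not attack the whole circle at once: at the stopping time $\rho=\inf\{t:\,\sup(W^+_{0,t},W^-_{0,t})=l\}$, on the event $\{W^+_{0,\rho}=l\}$ the \emph{entire} image $\varphi_{0,\rho}(\mathscr C)$ already lies in the arc $\{\arg\in[l,2\pi-l]\}$, and in particular nothing is at or near the vertex $1$. Second, to sweep that arc into $e^{il}$, one conditions on a clean upswing: $A_\rho=\{T_{\rho,2(\pi-l)}<\gamma^-_{\rho,\delta}\}$, i.e.\ $W$ rises by $2(\pi-l)$ after $\rho$ before ever dropping more than a fixed $\delta\in(0,l)$ below its running maximum. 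On $A_\rho$ the cluster at $e^{il}$ (driven by $K^-$ through $W^-$) is confined to distance $\delta<l$ from $e^{il}$, so it never re-enters $1$, while the outermost point $e^{-il}$ is carried counterclockwise all the way to $e^{il}$; coalescence handles the intermediate points. So on $\{W^+_{0,\rho}=l\}\cap A_\rho$, which has positive probability, the whole circle is at $e^{il}$ at the \emph{deterministic} stopping time $T_{\rho,2(\pi-l)}$. Repeating along $\sigma_{k+1}=T_{\rho_{\sigma_k},2(\pi-l)}$ produces independent events $C_k$ of equal positive probability; Borel--Cantelli gives infinitely many successes; and the final sequence $(S_k)$ is an extraction $\sigma_{k_n}$ of the successful indices (checked to be $\mathcal F^W$-stopping times). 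It is this passage from an almost-sure claim to a positive-probability claim, cured by independence and Borel--Cantelli, that you are missing, and the ``collapse first into the arc $[l,2\pi-l]$'' step is what makes the clean-upswing event suffice. (For $l=\pi$ your picture is essentially correct and no Borel--Cantelli is needed: the two vertices are antipodal and a symmetry argument, Lemma~\ref{ho}, collapses everything to $-1$ at $\gamma^+_\pi$.)
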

The next proposition shows that the support of $K^{m^+,m^-}$ may contain an arbitrary large number of points with positive probability (more informations can be found in Section \ref{yiu}).
\begin{prop}\label{nahi}
Assume that $m^+$ and $m^-$ are both distinct from $\frac{1}{2}(\delta_{0}+\delta_1)$. Then there exists a sequence of events $(C_n)_{n\geq 0}$ and a sequence of $(\mathcal F^W_{0,t})_{t\geq0}$-stopping times $(\sigma_n)_{n\geq 0}$ such that for all $n\geq 0$,
\begin{enumerate}
 \item [(i)] $\mathbb P(C_n)>0$,
\item[(ii)] $\textrm{Card supp}\left(K^{m^+,m^-}_{0,\sigma_n}(1)\right)=n+1$ a.s. on $C_n$.
\end{enumerate}
\end{prop}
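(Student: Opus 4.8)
The plan is to construct $(C_n,\sigma_n)$ by induction on $n$, raising by one the number of atoms of $K^{m^+,m^-}_{0,\sigma_n}(1)$ at each step, using three ingredients. First, from \eqref{mp} and the hypothesis, neither $m^+$ nor $m^-$ can be carried by $\{0,1\}$ (that, together with mean $\tfrac12$, would force it to equal $\tfrac12(\delta_0+\delta_1)$), so $m^\pm((0,1))>0$: a \emph{genuine} splitting at either vertex, producing two atoms of positive mass at distinct points, occurs with positive probability. Second, the flow property $K_{0,v}(x)=\int K_{u,v}(y)\,K_{0,u}(x)(dy)$ together with the description provided by Theorem~\ref{a}(1) (and its Section~2 construction): an atom in the interior of an edge is transported rigidly, by $+W$ on $\mathscr C^+$ and by $-W$ on $\mathscr C^-$, until it reaches a vertex; an atom sitting at a vertex $P$ splits, as soon as $W$ moves so as to carry mass off $P$, into two atoms placed symmetrically about $P$ on the two edges, with masses in ratio $U:1-U$, where $U$ has law $m^+$ if $P=1$ and $m^-$ if $P=e^{il}$, and $U$ is independent of $W$ and of the past. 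Third, the strong Markov property of $(K,W)$ at an $(\mathcal F^W_{0,t})$-stopping time $\sigma$: $(K_{\sigma,\sigma+\cdot},W_{\sigma,\sigma+\cdot})$ is an independent copy of $(K_{0,\cdot},W_{0,\cdot})$, which in particular gives $\mathbb P(C_{n+1})=\mathbb P(C_{n+1}\mid C_n)\,\mathbb P(C_n)$. (The construction below uses $l\neq\pi$.)

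For the base case take $\sigma_0=0$, $C_0=\Omega$, so $K_{0,\sigma_0}(1)=\delta_1$ has support of cardinality $1$. The inductive invariant carried along $C_n$ is: $K_{0,\sigma_n}(1)=\sum_{j=0}^{n}p_j\delta_{z_j}$ with all $p_j>0$, all $z_j$ distinct, exactly one atom $z_0$ within a small distance $\eta_n$ of one vertex, the other $n$ atoms in the interiors of $\mathscr C^\pm$ at distances $\ge\eta_n$ from the two vertices and from each other, and all the relevant arc–distances to $1$ and $e^{il}$ generic, in the sense that no short further displacement of $W$ can produce a coincidence of two atoms or a premature arrival of an atom at a vertex.

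For the inductive step, on $C_n$ let $W$ run from time $\sigma_n$ along a prescribed piecewise–monotone schedule whose pieces are short relative to $\eta_n$. A first short monotone piece brings $z_0$ to its nearby vertex $P$ with no other atom reaching a vertex and no collision; then a short piece of $W$ that carries mass off $P$: on the event — one of the two constituents of $C_{n+1}\subset C_n$ — that the fresh splitting variable at $P$ during $[\sigma_n,\sigma_{n+1}]$ lies in $(0,1)$, the atom at $P$ splits into two distinct atoms of positive mass, symmetric about $P$; finally $W$ is run through finitely many more short monotone stretches that shepherd one of the two new children into the interior of an edge and separate it from its sibling — here $l\neq\pi$ is used, since the two children are then at different distances from the opposite vertex and hence do not recoalesce there — ending at a time $\sigma_{n+1}$ at which the $n+2$ atoms are pairwise distinct, one of them lies within $\eta_{n+1}$ of a vertex, and the configuration is again generic. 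All step sizes are chosen to avoid the finitely many bad values. The event that $W_{\sigma_n,\cdot}$ follows this schedule within the prescribed tolerances is a measurable function of the increments of $W$ after $\sigma_n$, has positive probability because Brownian motion charges every open set of paths, and is independent of $\{U\in(0,1)\}$, whose probability is $m^\pm((0,1))>0$; hence $\mathbb P(C_{n+1}\mid C_n)>0$, and by induction $\mathbb P(C_n)>0$ for every $n$. On $C_{n+1}$ the flow property gives $K_{0,\sigma_{n+1}}(1)=\sum_{j=0}^{n}p_j\,K_{\sigma_n,\sigma_{n+1}}(z_j)$, where each $K_{\sigma_n,\sigma_{n+1}}(z_j)$ with $j\neq0$ is a single Dirac mass (rigid transport in an edge interior) and $K_{\sigma_n,\sigma_{n+1}}(z_0)$ is a sum of two Dirac masses at distinct points, and the generic placement keeps all $n+2$ atoms distinct, so $\textrm{Card supp}\big(K^{m^+,m^-}_{0,\sigma_{n+1}}(1)\big)=n+2$ a.s.\ on $C_{n+1}$.

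The main obstacle is the shepherding step. Because a single Brownian motion drives every atom — forward on $\mathscr C^+$, backward on $\mathscr C^-$ — atoms cannot be moved independently, and the two children born at a vertex are symmetric about it and tend to recoalesce at the next vertex; keeping all $n+2$ atoms simultaneously distinct, with exactly one poised near a vertex and the rest safely inside edges, requires a careful choice of the short monotone stretches and of their lengths, genuinely using $l\neq\pi$ and the unequal lengths of $\mathscr C^+$ and $\mathscr C^-$, and producing a schedule whose ``good'' event is $\mathcal F^W$-measurable with positive probability so that $\sigma_{n+1}$ is indeed an $(\mathcal F^W_{0,t})$-stopping time. Making this schedule explicit, possibly distinguishing subranges of $l$, and verifying that no unwanted splitting or merging occurs along it, is where the real work lies; the case $l=\pi$ is degenerate and excluded here.
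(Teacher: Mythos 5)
Your outline shares the right intuition with the paper --- one atom at a time is parked at a vertex, split there into two children using $m^\pm((0,1))>0$, and a positive-probability event of the driving Brownian motion guarantees that the splits accumulate rather than cancel. But the crucial content, namely an explicit choice of stopping times and events under which \emph{all} atoms stay distinct and no unwanted absorption at a vertex occurs, is exactly what your proposal defers (``Making this schedule explicit \dots is where the real work lies''). That is not a detail to be filled in later: it is the whole proposition. As you yourself observe, one Brownian motion drives every atom, and the two children of a split at a vertex both drift toward the \emph{opposite} vertex, where (for $l=\pi$) they would recoalesce; for $l\neq\pi$ they arrive at different times, but the one arriving first then splits again, and one must control all the newly produced atoms simultaneously along with the old ones. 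A vague ``piecewise-monotone schedule'' does not establish that this bookkeeping closes.

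The paper sidesteps your shepherding problem entirely by making the natural stopping times $\rho^k=\rho^k_0$ (successive times at which $\sup(W^+,W^-)$ reaches $l$) do the work. On $[\rho^{k-1},\rho^k]$ exactly one of the two vertices is visited, so exactly one atom splits and the others move rigidly; the events $A_k$ simply alternate which vertex is visited (via $\{W^+_{\cdot}=l\}$ or $\{W^-_{\cdot}=l\}$) and pin down the corresponding sup or inf of $W$ between two levels $\alpha_k<\alpha_{k-1}$ drawn from a fixed decreasing sequence. That quantitative constraint is what prevents collisions and premature arrivals at a vertex, and since the $A_k$ depend on increments of $W$ over disjoint time intervals they are independent, so $\mathbb P(C_n)=\prod_{k\le n}\mathbb P(A_k)>0$ follows from Lemma~\ref{kar} with no strong-Markov gymnastics. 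The induction (properties $\mathcal P_{2k}$, $\mathcal P_{2k+1}$) then records the exact arguments of all $n+1$ atoms. Your proposal, by contrast, never names a concrete schedule, never exhibits the $\mathcal F^W$-measurable positive-probability event, and never verifies non-collision for more than two atoms --- these are not cosmetic omissions but the substance of the proof. (One point in your favour: you explicitly build $\{U^\pm\in(0,1)\}$ into $C_{n+1}$, which is needed for the stated ``a.s.\ on $C_n$'' when $m^\pm$ may charge $\{0,1\}$; the paper's purely $\mathcal F^W$-measurable $C_n$ glosses over this.)
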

We also mention that all the sequences of stopping times discussed in the previous two propositions will be constructed independently of $(m^+,m^-)$. They take values in $\{\rho_n, n\in\N\}$ where $\rho_0=0$ and $\rho_{n+1}=\inf\{t\geq \rho_n,\ \sup(W^{+}_{\rho_n,t},W^{-}_{\rho_n,t})=l\}$ for $n\geq 0$. Set, for $z\in\mathscr C, n\in\N$,
$$X^z_n=\textrm{supp}\left(K^{m^+,m^-}_{0,\rho_n}(z)\right)$$
where $m^+$ and $m^-$ are distinct from $\frac{1}{2}(\delta_{0}+\delta_1)$. Then $(X^z_n)_n$ is a strong Markov chain on $E=\cup_{k\geq 1}\mathscr C^k.$ Proposition
\ref{telv} asserts that $\{1\}$ and $\{e^{il}\}$ are recurrent for this chain. Proposition \ref{nahi} asserts that for all $n\geq 0$, both $\{1\}$ and $\{e^{il}\}$ (by analogy) communicate with $\mathscr C^{n+1}$. So one can deduce the following immediate 
\begin{corl}\label{tila}
For all $z\in\mathscr C, n\geq 0$, $\mathscr C^{n+1}$ is a recurrent set for $X^z$ (i.e. a.s. $\forall n\geq 0,\ X^z_k \in \mathscr C^{n+1}$ for infinitely many $k$).
\end{corl}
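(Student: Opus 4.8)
\noindent\textbf{Sketch of proof of Corollary~\ref{tila}.} The plan is to read the statement off the classical fact that, in a time-homogeneous strong Markov chain, a recurrent state $a$ from which a set $B$ is reachable with positive probability is visited at arbitrarily large times together with $B$. Throughout we keep the standing assumption of Proposition~\ref{nahi}, that $m^+$ and $m^-$ are both distinct from $\frac12(\delta_0+\delta_1)$, so that $X^z=(X^z_k)_{k\ge0}$ is a time-homogeneous strong Markov chain on $E=\cup_{k\ge1}\mathscr C^k$; write $(\mathcal G_k)_{k\ge0}$ for the filtration it generates. Since $z$ is fixed and a countable intersection of a.s.\ events is a.s., it suffices to show that for each fixed $n\ge0$ one has a.s.\ $X^z_k\in\mathscr C^{n+1}$ for infinitely many $k$. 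Two inputs enter. First, applying Proposition~\ref{telv}(2) with initial point $1$: one has $X^1_0=\textrm{supp}(K^{m^+,m^-}_{0,0}(1))=\{1\}$, the stopping times $T_k$ of that statement belong to $\{\rho_m:m\in\N\}$ and tend to $\infty$ so that the corresponding indices are unbounded (the $\rho_m$ increasing to $\infty$), and $K^{m^+,m^-}_{0,T_k}(1)=\delta_1$ forces $X^1$ to equal $\{1\}$ along that unbounded sequence of indices; hence $\{1\}$ is recurrent for the chain issued from $\{1\}$. Second, Proposition~\ref{nahi} gives, for our fixed $n$, an event $C_n$ with $\mathbb P(C_n)>0$ and a stopping time $\sigma_n\in\{\rho_m:m\in\N\}$, a.s.\ finite on $C_n$, with $\textrm{Card supp}(K^{m^+,m^-}_{0,\sigma_n}(1))=n+1$ on $C_n$; since $\sigma_n$ equals some $\rho_m$ with $m$ a finite random index, $X^1$ visits $\mathscr C^{n+1}$ on $C_n$, whence $p_n:=\mathbb P(X^1_k\in\mathscr C^{n+1}\text{ for some }k\ge0)\ge\mathbb P(C_n)>0$.

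Next I would run a Borel--Cantelli argument along the excursions of $X^1$ away from $\{1\}$. Let $0=:R_0<R_1<R_2<\cdots$ be the successive return times of $X^1$ to the state $\{1\}$; by the recurrence just recorded these are a.s.\ finite and $R_j\uparrow\infty$. By the strong Markov property at the $R_j$ together with time-homogeneity, the excursions $\mathcal E_j:=(X^1_{R_{j-1}},\dots,X^1_{R_j})$, $j\ge1$, are i.i.d., and hence so are the events $F_j:=\{X^1_k\in\mathscr C^{n+1}\text{ for some }R_{j-1}\le k<R_j\}$. Since $R_j\uparrow\infty$, any index $k$ with $X^1_k\in\mathscr C^{n+1}$ lies in exactly one block $[R_{j-1},R_j)$, so a.s.\ $\bigcup_{j\ge1}F_j=\{X^1_k\in\mathscr C^{n+1}\text{ for some }k\}$; combined with $p_n>0$ this forces $\mathbb P(F_1)=\mathbb P(F_j)>0$. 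By the second Borel--Cantelli lemma, a.s.\ infinitely many $F_j$ occur, i.e.\ a.s.\ $X^1_k\in\mathscr C^{n+1}$ for infinitely many $k$.

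Finally, to replace the initial point $1$ by an arbitrary $z\in\mathscr C$: by Proposition~\ref{telv}(2) the chain $X^z$ reaches the state $\{1\}$ at some a.s.\ finite $(\mathcal G_k)$-stopping time $\theta$, and by the strong Markov property at $\theta$ the shifted chain $(X^z_{\theta+k})_{k\ge0}$ has the law of $X^1$; the previous paragraph applied to it gives that a.s.\ $X^z_{\theta+k}\in\mathscr C^{n+1}$ for infinitely many $k$, hence a.s.\ $X^z_k\in\mathscr C^{n+1}$ for infinitely many $k$. Intersecting over $n\in\N$ yields the corollary. The one slightly delicate point is the implication $\mathbb P(C_n)>0\Rightarrow\mathbb P(F_1)>0$: it relies on $\sigma_n$ being a.s.\ finite on $C_n$, so that the visit to $\mathscr C^{n+1}$ guaranteed by Proposition~\ref{nahi} occurs at a finite index, inside one of the excursions --- which is implicit in the statement of Proposition~\ref{nahi}. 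Everything else is the standard recurrence/Borel--Cantelli machinery, which is why the statement is flagged as immediate.
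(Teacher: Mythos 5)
Your proof is correct and is exactly the standard Markov-chain argument that the paper compresses into the word ``immediate'': Proposition~\ref{telv}(2) makes $\{1\}$ a recurrent state for the strong Markov chain $X^1$, Proposition~\ref{nahi} gives $\mathbb P(X^1\text{ visits }\mathscr C^{n+1})>0$, and the i.i.d.\ excursion decomposition at $\{1\}$ together with the second Borel--Cantelli lemma upgrades this to almost-sure infinitely-many visits, after which the general $z$ follows by strong Markov at the first hit of $\{1\}$. One small streamlining you could make: Proposition~\ref{telv}(2) gives $K^{m^+,m^-}_{0,T_k}(z)=\delta_1$ \emph{simultaneously for all} $z\in\mathscr C$, and the $T_k$ lie in $\{\rho_m\}$, so $X^z$ itself (not just $X^1$) already hits $\{1\}$ along an unbounded sequence of indices; you could therefore run the excursion/Borel--Cantelli argument directly on $X^z$ and skip the final reduction to the case $z=1$. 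Also note that the stopping times $\sigma_n$ of Proposition~\ref{nahi} (which are concretely $\rho^n$) are a.s.\ finite on all of $\Omega$, not merely on $C_n$; this does not affect your argument. Otherwise the reasoning, including the care taken over the fact that every visit to $\mathscr C^{n+1}$ falls into exactly one excursion block, is sound and matches what the authors intend.
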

Even that the supports of the flows $K^{m^+,m^-}$ may be concentrated on arbitrarily many points at some times, these random sets are always finite in the following sense: a.s. $$\forall z\in\mathscr C,\; t\geq 0,\quad \textrm{Card supp}\left(K^{m^+,m^-}_{0,t}(z)\right)<\infty.$$
Let us describe the organization of this paper. In Section 2, we prove the first part of Theorem \ref{a}. The proof of the second part will be the subject of Section 3.  In Section 4, we prove Proposition \ref{telv}. Section 5 gives some informations about the support of $K^{m^+,m^-}$ and proves Proposition \ref{nahi}.
\section{Construction of flows associated to $(T_{\mathscr C})$}\label{TAB}
Fix two probability measures $m^+$ and $m^-$ on $[0,1]$ with mean $\frac{1}{2}$.
\subsection{Coupling flows associated with two Tanaka's equations on $\R$. \label{klp} \newline}
In this section, we follow \cite{MR2235172}. By Kolmogorov extension theorem, there exists a probability space $(\Omega,\mathcal A,\mathbb P)$ on which one can construct a process $(\varepsilon_{s,t}^+,\varepsilon_{s,t}^-, U_{s,t}^+,U_{s,t}^-,W_{s,t})_{-\infty<s\leq t<\infty}$ taking values in $\{-1,1\}^2\times [0,1]^2\times \R$ such that (i), (ii), (iii), (iv) and (v) are satisfied, where
\begin{itemize}
 \item [(i)] $W_{s,t}:=W_t-W_s$ for all $s\leq t$ and $W$ is a Brownian motion on $\R$.
 \item[(ii)] Given $W$, $(\varepsilon_{s,t}^+,U_{s,t}^+)_{s\leq t}$ and $(\varepsilon_{s,t}^-,U_{s,t}^-)_{s\leq t}$ are independent.
\item[(iii)]  For fixed $s<t$, $(\varepsilon_{s,t}^{\pm},U_{s,t}^{\pm})$ is independent of $W$ and $$(\varepsilon_{s,t}^{\pm},U_{s,t}^{\pm})\overset{law}{=}(u\delta_{1}(dx)+(1-u)\delta_{-1}(dx))m^{\pm}(du).$$
\noindent In particular $\mathbb P(\varepsilon^{\pm}_{s,t}=1|U^{\pm}_{s,t})=U^{\pm}_{s,t}.$
\item[(iv)] Define for all $s\leq t$
$$\textrm{m}^+_{s,t}=\inf\{W_{u}; u\in[s,t]\}\quad \hbox{and} \quad \textrm{m}^-_{s,t}=\sup\{W_{u}; u\in[s,t]\}.$$
Then for all $s<t$ and $u<v$, then 
\begin{equation}\label{houd}
\mathbb P(\varepsilon^{\pm}_{s,t}=\varepsilon^{\pm}_{u,v}, U^{\pm}_{s,t}=U^{\pm}_{u,v}|\textrm{m}^{\pm}_{s,t}=\textrm{m}^{\pm}_{u,v})=1.
\end{equation}
\item[(v)] For all $s<t$ and $\{(s_{i},t_{i}); 1\leq i\leq n \}$ with $s_{i}<t_{i}$, the law of $(\varepsilon^{\pm}_{s,t},U^{\pm}_{s,t})$ knowing
  $(\varepsilon^{\pm}_{s_{i},t_{i}},U^{\pm}_{s_{i},t_{i}})_{1\leq i\leq n}$ and $W$ is given by
$$(u\delta_{1}(dx)+(1-u)\delta_{-1}(dx))m^{\pm}(du)$$
 when $\textrm{m}^{\pm}_{s,t}\not\in\{\textrm{m}^{\pm}_{s_{i},t_{i}}; 1\leq i\leq n\}$ and is otherwise given by
 $$\delta_{\varepsilon^{\pm}_{s_{i},t_{i}},U^{\pm}_{s_{i},t_{i}}}$$
 on the event $\{\textrm{m}^{\pm}_{s,t}=\textrm{m}^{\pm}_{s_{i},t_{i}}\}$ with $1\leq i\leq n$.
\end{itemize}

\medskip

\noindent Note that (i)-(v) uniquely define the law of $$(\varepsilon^+_{s_{1},t_{1}},U^+_{s_{1},t_{1}},\varepsilon^-_{s_{1},t_{1}},U^-_{s_{1},t_{1}},\cdots,\varepsilon^+_{s_{n},t_{n}},U^+_{s_{n},t_{n}},\varepsilon^-_{s_{n},t_{n}},U^-_{s_{n},t_{n}},W)$$ for all $s_i<t_i$, $1\leq i\leq n.$ This family of laws is consistent by construction.
Note in particular that, when (iv) is satisfied for $(s_i,t_i)$ and $(s_j,t_j)$ with $1\le i,j\le n$, then (v) properly defines the law of $(\varepsilon^{\pm}_{s,t},U^{\pm}_{s,t})$ knowing  $(\varepsilon^{\pm}_{s_{i},t_{i}},U^{\pm}_{s_{i},t_{i}})_{1\leq i\leq n}$ and $W$, and we have that (iv) also holds for $(s,t)$ and $(s_j,t_j)$ with $1\le j\le n$.\\
\noindent For $s\leq t$, $x\in \R$, define $$\tau^{\pm}_{s}(x)=\inf\{r\geq s:\; W_{s,r}=\mp|x|\}$$ and set
\begin{eqnarray}
\varphi^{\pm}_{s,t}(x)&=&(x\pm\textrm{sgn}(x)W_{s,t})1_{ \{t\leq \tau^{\pm}_{s}(x)\}}+\varepsilon^{\pm}_{s,t}W_{s,t}^{\pm}1_{ \{t> \tau^{\pm}_{s}(x)\}},\nonumber\\
K^{\pm}_{s,t}(x)&=&\delta_{x\pm\textrm{sgn}(x)W_{s,t}}1_{ \{t\leq \tau^{\pm}_{s}(x)\}}+\big(U^{\pm}_{s,t}\delta_{W_{s,t}^{\pm}}+(1-U^{\pm}_{s,t})\delta_{-W_{s,t}^{\pm}}\big) 1_{ \{t> \tau^{\pm}_{s}(x)\}}.\nonumber\
\end{eqnarray}
Recall the following
\begin{thm}(\cite{MR2235172}\label{mama})\\
(i) $(\varphi^{+},W)$ and $(\varphi^{-},-W)$  solve Tanaka's equation (\ref{cc}).\\
(ii) $(K^{+},W)$ and $(K^{-},-W)$  solve Tanaka's equation (\ref{dd}).\\
(iii) For all $x\in \R$, all $s\leq t$ and all bounded continuous function $f$, a.s.
$$K^{\pm}_{s,t}f(x)=E[f(\varphi^{\pm}_{s,t}(x))|K^{\pm}].$$
\end{thm}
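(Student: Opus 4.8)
The plan is to reduce each of the three assertions to an application of It\^o's formula on the excursion intervals of a reflected Brownian motion, combined with the defining properties (ii)--(v) of the auxiliary variables $(\varepsilon^{\pm},U^{\pm})$ from Section~\ref{klp}.

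For (i) and (ii) I would fix $s\le t$ and $x\in\R$, set $\tau:=\tau^+_s(x)$, and split according to $\{t\le\tau\}$ and $\{t>\tau\}$. On $\{t\le\tau\}$ the sign of $\varphi^+_{s,u}(x)=x+\mathrm{sgn}(x)W_{s,u}$ does not change and $K^+_{s,u}(x)=\delta_{x+\mathrm{sgn}(x)W_{s,u}}$, so both Tanaka equations reduce to It\^o's formula for $u\mapsto f(x+\mathrm{sgn}(x)W_{s,u})$ (with $f=\mathrm{id}$ for (i)). On $\{t>\tau\}$ one works with $R_u:=W^+_{s,u}=W_u-\mathrm{m}^+_{s,u}$, which for $u\ge\tau$ is a Brownian motion reflected at $0$, issued from $0$ at time $\tau$, with $dR_u=dW_u+dA_u$, where $A_u:=-\inf_{\tau\le r\le u}(W_r-W_\tau)$ is continuous, nondecreasing, and increases only on the zero set $Z:=\{u\ge\tau:R_u=0\}$. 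The key point is that, since $\mathrm{m}^+_{s,u}$ decreases exactly at the times when $R_u=0$, property~(iv) applied to the pairs $(s,u),(s,u')$ forces $u\mapsto(\varepsilon^+_{s,u},U^+_{s,u})$ to be (for a suitable version) constant on every excursion interval of $R$ away from $0$. On such an interval $(a,b)$ one has $R_u=W_u-W_a>0$, so $\varphi^+_{s,u}(x)=\varepsilon_{(a,b)}(W_u-W_a)$ and $K^+_{s,u}f(x)=U_{(a,b)}f(W_u-W_a)+(1-U_{(a,b)})f(-(W_u-W_a))$ with $\varepsilon_{(a,b)},U_{(a,b)}$ constant, and a direct application of It\^o's formula, linear in the two copies of $f$, shows that on $(a,b)$ the right-hand sides of \eqref{cc} and \eqref{dd} have exactly the prescribed drift $\tfrac12K^+_{s,u}f''(x)\,du$ and martingale part $K^+_{s,u}(f'\mathrm{sgn})(x)\,dW_u$ (resp. $\mathrm{sgn}(\varphi^+_{s,u}(x))\,dW_u$). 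It then remains to glue across $Z$: there $\varphi^+_{s,u}(x)=0$ and $K^+_{s,u}f(x)=f(0)$ \emph{independently} of $\varepsilon^+_{s,u},U^+_{s,u}$, which makes $u\mapsto\varphi^+_{s,u}(x)$ and $u\mapsto K^+_{s,u}f(x)$ continuous, and a short computation with It\^o's formula shows that the candidate right-hand sides return to the same value ($0$, resp. $f(0)$) at the two endpoints of each excursion interval; since $Z$ has zero Lebesgue measure and all processes are continuous, the two sides of \eqref{cc}, resp. \eqref{dd}, agree on $Z$ as well, hence on all of $[\tau,\infty)$. For $\varphi^-,K^-$ one runs the same argument with $\widetilde W:=-W$, noting that $W^-_{s,t}=\widetilde W_t-\inf_{s\le u\le t}\widetilde W_u$ and $\tau^-_s(x)=\inf\{r\ge s:\widetilde W_{s,r}=-|x|\}$ play for $\widetilde W$ the roles of $W^+$ and $\tau^+$ for $W$; this turns the assertions ``$(\varphi^-,-W)$ solves \eqref{cc}'' and ``$(K^-,-W)$ solves \eqref{dd}'' into the ones already established.

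For (iii) I would first check that, up to negligible sets, $\sigma(K^+)=\sigma(W)\vee\sigma(U^+_{u,v}:u\le v)$: from the explicit formula one recovers $W$ (from $K^+_{u,v}(y)=\delta_{y+\mathrm{sgn}(y)W_{u,v}}$ for $|y|$ large) and, for every $u<v$ with $W^+_{u,v}>0$, the mass $U^+_{u,v}$ (as the weight $K^+_{u,v}(y)$ puts on the point $W^+_{u,v}$, for $|y|$ small), while conversely $K^+$ is a function of $W$ and the $U^+_{u,v}$; in particular $\sigma(K^+)$ does not see the signs $\varepsilon^+_{u,v}$. On $\{t\le\tau^+_s(x)\}\cup\{W^+_{s,t}=0\}$ both sides of the claimed identity equal $f(x+\mathrm{sgn}(x)W_{s,t})$, resp. $f(0)$, so there is nothing to prove; on the complementary event $f(\varphi^+_{s,t}(x))=\mathbf{1}_{\{\varepsilon^+_{s,t}=1\}}f(W^+_{s,t})+\mathbf{1}_{\{\varepsilon^+_{s,t}=-1\}}f(-W^+_{s,t})$ with $W^+_{s,t}$ measurable with respect to $\sigma(K^+)$, so that the identity reduces to $\mathbb P(\varepsilon^+_{s,t}=1\mid\sigma(K^+))=U^+_{s,t}$. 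This last fact follows from properties (iii), (iv) and (v) of the construction: testing against functionals $g(W)\prod_ih_i(U^+_{u_i,v_i})$ and first conditioning on $W$ together with the pairs $(\varepsilon^+_{u_i,v_i},U^+_{u_i,v_i})$, property~(v) yields that the conditional law of $\varepsilon^+_{s,t}$ is Bernoulli$(U^+_{s,t})$ when $\mathrm{m}^+_{s,t}$ is a fresh running minimum and equals $\delta_{\varepsilon^+_{u_i,v_i}}$ with $U^+_{s,t}=U^+_{u_i,v_i}$ (by (iv)) otherwise; averaging out the signs $\varepsilon^+_{u_i,v_i}$ with the Bernoulli property $\mathbb P(\varepsilon^+_{u_i,v_i}=1\mid U^+_{u_i,v_i})=U^+_{u_i,v_i}$ of (iii) reduces the second case to the first. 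The identity for $K^-$ again follows by replacing $W$ with $-W$.

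The main obstacle is not any single computation but the two places where one passes from the finite-dimensional description of Section~\ref{klp} to a pathwise or $\sigma$-field statement: producing a version of $u\mapsto(\varepsilon^+_{s,u},U^+_{s,u})$ that is almost surely constant on \emph{all} excursion intervals of $R$ simultaneously -- which is exactly what properties (iv) and (v) are designed to supply, together with the identification of $\{u\ge\tau:R_u=0\}$ with the set of times at which $\mathrm{m}^+_{s,\cdot}$ decreases -- and the conditioning argument for $\mathbb P(\varepsilon^+_{s,t}=1\mid\sigma(K^+))=U^+_{s,t}$, where one must take care to condition only on $W$ and the masses $U^+$ and never on the signs $\varepsilon^+$. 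The It\^o computations on excursion intervals and the continuity argument used to glue across the zero set are then routine.
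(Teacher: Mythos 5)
The paper does not prove Theorem~\ref{mama}: it is imported verbatim from \cite{MR2235172} with a citation, and no argument is given. So there is no ``paper's own proof'' to compare against directly. The paper does, however, reprove the closely analogous statement for the circle flow in Section~2.6 (``First step''), and that proof goes by a genuinely different route from yours: it first establishes that $\varphi^{+}_{S,S+\cdot}$ is a Brownian motion (using the modification of Section~2.2 and the i.i.d.\ structure of the signs across excursions), and then applies Tanaka's formula together with uniqueness of the Doob--Meyer decomposition of the reflected Brownian motion $W^{+}_{S,\cdot}$ to identify the martingale part and deduce the SDE. Your approach, by contrast, is a pathwise excursion-by-excursion It\^o computation followed by a gluing step.

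The on-excursion It\^o computation is fine, and so is the reduction of $(\varphi^{-},-W)$ and $(K^{-},-W)$ to the $+$ case via $\widetilde W=-W$. Part~(iii) also looks essentially right: $\sigma(K^{+})=\sigma(W)\vee\sigma(U^{+})$ up to null sets, $\sigma(K^{+})$ does not see $\varepsilon^{+}$, and the conditional Bernoulli property $\mathbb P(\varepsilon^{+}_{s,t}=1\mid W,(U^{+}_{u,v})_{u\le v})=U^{+}_{s,t}$ follows from properties (iii)--(v) of Section~\ref{klp} by the finite-family argument you outline.

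The one place where the argument as written has a genuine gap is the gluing across the zero set $Z$ of $W^{+}_{s,\cdot}$. You assert that ``since $Z$ has zero Lebesgue measure and all processes are continuous, the two sides agree on $Z$ as well.'' This inference is not valid: a continuous function can be constant on every complementary interval of a Lebesgue-null closed set and still not be constant (the Cantor function), so knowing that the increments of the candidate right-hand side vanish over each excursion and that $Z$ has measure zero does not by itself give agreement on $Z$. What actually saves the argument is a stronger fact that you do not state: the candidate right-hand side takes the value $0$ (resp.\ $f(0)$) at \emph{every} zero of $W^{+}_{s,\cdot}$ past $\tau$, which can be seen either (a) by replacing $\mathrm{sgn}(\varphi^{+}_{s,\cdot}(x))$ by $\mathrm{sgn}(\varphi^{+}_{s,\cdot}(x))\mathbf{1}_{\{W^{+}_{s,\cdot}>\delta\}}$, noting that the resulting stochastic integral vanishes identically at each zero of $W^{+}$ because each complete excursion of $W^{+}$ above level $\delta$ contributes exactly zero (constant integrand $\varepsilon$ times $W_{\beta}-W_{\alpha}=W^{+}_{s,\beta}-W^{+}_{s,\alpha}=\delta-\delta=0$), and then passing to the limit $\delta\to 0$ in $L^{2}$; or (b) by avoiding the excursion decomposition entirely and running the paper's own argument: show $\varphi^{+}_{s,\cdot}(x)$ is a Brownian motion, apply Tanaka's formula to $|\varphi^{+}_{s,\cdot}(x)|=W^{+}_{s,\cdot}$, and invoke Doob--Meyer uniqueness. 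You should replace the ``zero Lebesgue measure plus continuity'' sentence by one of these; as it stands, that sentence is the load-bearing step and it is not a correct deduction.
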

\subsection{Modification of flows.\label{chayynt}\newline}
For our later needs, we will construct modifications of $\varphi^{\pm}$ and of $K^{\pm}$ which are measurable with respect to $(s,t,x,\omega)$.
On a set of probability $1$, define for all $s<t$, $(s_n,t_n)=(\frac{\lfloor ns \rfloor+1}{n},\frac{\lfloor nt \rfloor-1}{n})$ and
$$(\widetilde{\varepsilon}^{\pm}_{s,t},\widetilde{U}^{\pm}_{s,t})=(\limsup_{n\rightarrow\infty}{\varepsilon}^{\pm}_{s_n,t_n},\limsup_{n\rightarrow\infty}{U}^{\pm}_{s_n,t_n}).$$
\noindent Then, we have the following
\begin{lemma}\label{aa}
(i) For all $s<t$, a.s. $\widetilde{\varepsilon}^{\pm}_{s,t}={\varepsilon}^{\pm}_{s,t},\ \widetilde{U}^{\pm}_{s,t}={U}^{\pm}_{s,t}.$\\
(ii) Consider the random sets
$$\mathscr {D}^+=\{(s,t)\in\R^2; s<t, \textrm{m}^+_{s,t}<\textrm{min}(W_s,W_t)\},$$
$$\mathscr {D}^-=\{(s,t)\in\R^2; s<t, \textrm{m}^-_{s,t}>\textrm{max}(W_s,W_t)\}.$$
Then a.s. for all $(s,t)$ and $(u,v)$ in $\mathscr D^{\pm}$,
$$\textrm{m}^{\pm}_{s,t}=\textrm{m}^{\pm}_{u,v}\Longrightarrow(\widetilde{\varepsilon}^{\pm}_{s,t},\widetilde{U}^{\pm}_{s,t})=(\widetilde{\varepsilon}^{\pm}_{u,v},\widetilde{U}^{\pm}_{u,v}).$$
\end{lemma}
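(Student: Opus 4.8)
The plan is to reduce everything to a countable sub-family of the data, exploiting two features of the approximating intervals $(s_n,t_n)=(\frac{\lfloor ns\rfloor+1}{n},\frac{\lfloor nt\rfloor-1}{n})$: they always have rational coordinates, and they satisfy $s<s_n$, $t_n<t$, $s_n\to s$ and $t_n\to t$. The one elementary observation I will use repeatedly is the monotonicity of the running extrema, namely $[a,b]\subseteq[a',b']\Rightarrow\textrm{m}^+_{a',b'}\le\textrm{m}^+_{a,b}$ (and the reverse inequality for $\textrm{m}^-$); consequently, whenever $[s_n,t_n]$ is sandwiched between two intervals sharing the same value of $\textrm{m}^{\pm}$, it shares that value too. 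I will write the argument for the superscript $+$; the superscript $-$ is identical with $\inf,\min$ replaced by $\sup,\max$.

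For (i) I would fix $s<t$ and note first that, for a Brownian path on the fixed interval $[s,t]$, a.s. neither $s$ nor $t$ is a minimizer of $W$ on $[s,t]$, so there is a point $w^{\ast}\in(s,t)$ with $W_{w^{\ast}}=\textrm{m}^+_{s,t}$. Since $s<s_n<t_n<t$ with $s_n\to s$ and $t_n\to t$, for all $n$ large one has $s_n<w^{\ast}<t_n$, hence $\textrm{m}^+_{s_n,t_n}=\textrm{m}^+_{s,t}$ by the sandwich. On the other hand, property (iv) applied to the pair $((s_n,t_n),(s,t))$ says that for each fixed $n$, a.s. on $\{\textrm{m}^+_{s_n,t_n}=\textrm{m}^+_{s,t}\}$ one has $(\varepsilon^+_{s_n,t_n},U^+_{s_n,t_n})=(\varepsilon^+_{s,t},U^+_{s,t})$. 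Intersecting these countably many a.s. events, I obtain that a.s. the sequence $(\varepsilon^+_{s_n,t_n},U^+_{s_n,t_n})$ is eventually constant equal to $(\varepsilon^+_{s,t},U^+_{s,t})$, so its $\limsup$, which is by definition $(\widetilde\varepsilon^+_{s,t},\widetilde U^+_{s,t})$, equals $(\varepsilon^+_{s,t},U^+_{s,t})$.

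For (ii) I would introduce the a.s. event $\Omega_0$ on which $W$ is continuous and, for every pair of rationals $a<b$ and $c<d$, $\textrm{m}^+_{a,b}=\textrm{m}^+_{c,d}$ implies $(\varepsilon^+_{a,b},U^+_{a,b})=(\varepsilon^+_{c,d},U^+_{c,d})$; this has full probability, being a countable intersection of instances of (iv). On $\Omega_0$, let $(s,t)\in\mathscr D^+$, so $m:=\textrm{m}^+_{s,t}<\min(W_s,W_t)$; as in (i) pick $w^{\ast}\in(s,t)$ with $W_{w^{\ast}}=m$ and then rationals $p\in(s,w^{\ast})$, $q\in(w^{\ast},t)$, which gives $w^{\ast}\in[p,q]\subseteq[s,t]$ and hence $\textrm{m}^+_{p,q}=m$. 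For $n$ large, $s<s_n<p<q<t_n<t$, so $[p,q]\subseteq[s_n,t_n]\subseteq[s,t]$ and the sandwich forces $\textrm{m}^+_{s_n,t_n}=m=\textrm{m}^+_{p,q}$; since $(s_n,t_n)$ and $(p,q)$ are pairs of rationals, the defining property of $\Omega_0$ then yields $(\varepsilon^+_{s_n,t_n},U^+_{s_n,t_n})=(\varepsilon^+_{p,q},U^+_{p,q})$ for all $n$ large, and passing to the $\limsup$ gives $(\widetilde\varepsilon^+_{s,t},\widetilde U^+_{s,t})=(\varepsilon^+_{p,q},U^+_{p,q})$. If $(u,v)\in\mathscr D^+$ also satisfies $\textrm{m}^+_{u,v}=m$, the same recipe produces rationals $p'<q'$ with $\textrm{m}^+_{p',q'}=m$ and $(\widetilde\varepsilon^+_{u,v},\widetilde U^+_{u,v})=(\varepsilon^+_{p',q'},U^+_{p',q'})$; one last use of the property of $\Omega_0$, now applied to the two rational pairs $(p,q)$ and $(p',q')$ which share the value $m$, gives $(\varepsilon^+_{p,q},U^+_{p,q})=(\varepsilon^+_{p',q'},U^+_{p',q'})$, whence $(\widetilde\varepsilon^+_{s,t},\widetilde U^+_{s,t})=(\widetilde\varepsilon^+_{u,v},\widetilde U^+_{u,v})$.

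I expect the main obstacle to be the uniformity demanded in (ii): the conclusion must hold a.s. simultaneously over the uncountable family of pairs in $\mathscr D^{\pm}$, whereas (iv) only supplies an a.s. equality for one pair of intervals at a time. The device that defuses this is that the strict inequality $\textrm{m}^+_{s,t}<\min(W_s,W_t)$ built into the definition of $\mathscr D^{\pm}$ pushes the extremizer into the open interior, which is exactly what permits inserting a fixed rational interval $[p,q]$ strictly between the extremizer's location and the endpoints of $[s,t]$; after that, only the countable family $\{(\varepsilon^+_{a,b},U^+_{a,b}):a,b\in\Q\}$ is ever touched, and the single full-probability event $\Omega_0$ handles all pairs at once. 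Two small points will need care: that $(s_n,t_n)$ indeed has rational coordinates, so that the ambient family really is countable; and that the bookkeeping genuinely delivers an \emph{eventually constant} sequence $(\varepsilon^+_{s_n,t_n},U^+_{s_n,t_n})$ rather than merely infinitely many good indices, so that the $\limsup$ is the value one expects.
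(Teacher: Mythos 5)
Your proof is correct and follows essentially the same route as the paper's: it rests on the same full-probability event obtained by intersecting property (iv) over countably many rational pairs, on the almost-sure interiority of minimizers (which for part (i) is a property of the fixed Brownian interval and for part (ii) is forced deterministically by the strict inequality in the definition of $\mathscr D^{\pm}$ together with path continuity), and on the stabilization of $\textrm{m}^{\pm}_{s_n,t_n}$ along the rational approximating sequence $(s_n,t_n)$. The only cosmetic difference is in part (ii), where you introduce auxiliary rational intervals $[p,q]$ and $[p',q']$ strictly trapping the minimizers and then chain through them, whereas the paper compares the two approximating sequences $(s_n,t_n)$ and $(u_n,v_n)$ to each other directly; your version is a touch more explicit but uses no extra ideas.
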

\begin{proof}
\textbf{(i)} By (\ref{houd}), a.s. for all $s<t, u<v$ such that $(s,t,u,v)\in\Q^4$, we have
$$m^{\pm}_{s,t}=m^{\pm}_{u,v}\Longrightarrow (\varepsilon^{\pm}_{s,t}, U^{\pm}_{s,t})=(\varepsilon^{\pm}_{u,v},U^{\pm}_{u,v}).$$
Fix $s<t$. With probability $1$, $\textrm{m}^{\pm}_{s,t}$ is attained in $]s,t[$ and thus a.s. there exists $n_0$ such that
\begin{equation}\label{hmom}
m^{\pm}_{s,t}=m^{\pm}_{s_n,t_n}=m^{\pm}_{s_{n_0},t_{n_0}}\ \textrm{for all}\ n\geq n_0.
\end{equation}
Taking the limit, we get $(\widetilde{\varepsilon}^{\pm}_{s,t},\widetilde{U}^{\pm}_{s,t})=({\varepsilon}^{\pm}_{s_{n_0},t_{n_0}},{U}^{\pm}_{s_{n_0},t_{n_0}})$ a.s. From (\ref{houd}) and (\ref{hmom}), we also have that $({\varepsilon}^{\pm}_{s,t},{U}^{\pm}_{s,t})=({\varepsilon}^{\pm}_{s_{n_0},t_{n_0}},{U}^{\pm}_{s_{n_0},t_{n_0}})$ a.s. and (i) is proved.

\textbf{(ii)} With probability $1$, for all $(s,t)$ and $(u,v)$ in $\mathscr D^{\pm}$, if $\textrm{m}^{\pm}_{s,t}=\textrm{m}^{\pm}_{u,v}$, then
$\exists n_0: \textrm{m}^{\pm}_{s_n,t_n}=\textrm{m}^{\pm}_{u_n,v_n}\ \textrm{for all}\ n\geq n_0$, which implies that $$\exists n_0: ({\varepsilon}^{\pm}_{s_{n},t_{n}},{U}^{\pm}_{s_{n},t_{n}})=({\varepsilon}^{\pm}_{u_{n},v_{n}},{U}^{\pm}_{u_{n},v_{n}})\ \textrm{for all}\ n\geq n_0$$
and thus that $\widetilde{\varepsilon}^{\pm}_{s,t}=\widetilde{\varepsilon}^{\pm}_{u,v}$ and that $\widetilde{U}^{\pm}_{s,t}=\widetilde{U}^{\pm}_{u,v}$.
\end{proof}

\noindent We may now consider the following modifications of ${\varphi}^{\pm}$ and ${K}^{\pm}$ defined for all $s\leq t, x\in \R$ by
\begin{eqnarray}
\widetilde{\varphi}^{\pm}_{s,t}(x)&=&(x\pm\textrm{sgn}(x)W_{s,t})1_{ \{t\leq \tau^{\pm}_{s}(x)\}}+\widetilde{\varepsilon}^{\pm}_{s,t}W_{s,t}^{\pm}1_{ \{t> \tau^{\pm}_{s}(x)\}},\nonumber\\
\widetilde{K}^{\pm}_{s,t}(x)&=&\delta_{x\pm\textrm{sgn}(x)W_{s,t}}1_{ \{t\leq \tau^{\pm}_{s}(x)\}}+\big(\widetilde{U}^{\pm}_{s,t}\delta_{W_{s,t}^{\pm}}+(1-\widetilde{U}^{\pm}_{s,t})\delta_{-W_{s,t}^{\pm}}\big) 1_{ \{t> \tau^{\pm}_{s}(x)\}}.\nonumber\
\end{eqnarray}
Then Theorem \ref{mama} holds also for $\widetilde{\varphi}^{\pm}, \widetilde{K}^{\pm}$ (because (i), (ii), (iii) and (iv) stated at the begining of Section \ref{klp} are satisfied by $(\widetilde{\varepsilon}^{\pm},\widetilde{U}^{\pm},W)$).
\begin{lemma}\label{feda}

(i) The mapping $$(s,t,x,\omega)\longmapsto (\widetilde{\varphi}^{\pm}_{s,t}(x,\omega),\widetilde{K}^{\pm}_{s,t}(x,\omega))$$
is measurable from $\{(s,t,x,\omega), s\leq t, x\in\R, \omega\in\Omega\}$ into $\R\times\mathcal P(\R)$.\\
(ii) For all $s,t,x$, a.s.
$${\varphi}^{\pm}_{s,t}(x)=\widetilde{\varphi}^{\pm}_{s,t}(x)\ \textrm{and}\ \ {K}^{\pm}_{s,t}(x)=\widetilde{K}^{\pm}_{s,t}(x).$$
\end{lemma}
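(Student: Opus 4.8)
The plan is to prove (i) and (ii) separately, with (ii) being essentially immediate from Lemma \ref{aa}(i) and (i) being the substantive point. For (ii): fix $s\le t$ and $x\in\R$. The random variables $\tau^{\pm}_s(x)$, $W_{s,t}$, $W^{\pm}_{s,t}$ are all deterministic functions of the Brownian path, so the only randomness distinguishing $\varphi^{\pm}$ from $\widetilde\varphi^{\pm}$ (resp. $K^{\pm}$ from $\widetilde K^{\pm}$) on the event $\{t>\tau^{\pm}_s(x)\}$ is the pair $(\varepsilon^{\pm}_{s,t},U^{\pm}_{s,t})$ versus $(\widetilde\varepsilon^{\pm}_{s,t},\widetilde U^{\pm}_{s,t})$; Lemma \ref{aa}(i) says these agree a.s., so the two formulas coincide a.s. on that event, and they coincide identically on the complementary event, giving (ii).

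For (i), I would argue that each ingredient in the defining formulas is a measurable function of $(s,t,x,\omega)$, and that the formulas are built from these by measurable operations. First, $(s,t,\omega)\mapsto W_{s,t}(\omega)=W_t(\omega)-W_s(\omega)$ is jointly measurable since $t\mapsto W_t$ is (a.s.) continuous, hence the map $(t,\omega)\mapsto W_t$ is product-measurable; similarly $(s,t,\omega)\mapsto \inf_{u\in[s,t]}W_u$ and $\sup_{u\in[s,t]}W_u$ are jointly measurable (continuity lets one restrict the inf/sup to rationals in $[s,t]$, a countable operation depending measurably on $(s,t)$), so $W^{\pm}_{s,t}$ is jointly measurable. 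Next, $(s,x,\omega)\mapsto\tau^{\pm}_s(x)=\inf\{r\ge s: W_{s,r}=\mp|x|\}$ is jointly measurable: it is a hitting time of a jointly measurable, continuous-in-$r$ process, and one can write $\{\tau^{\pm}_s(x)<q\}$ in terms of countably many conditions $\sup$/$\inf$ of $W_{s,\cdot}$ over rational subintervals exceeding/falling below $\mp|x|$, each jointly measurable in $(s,x,\omega)$. Consequently the indicator sets $\{t\le\tau^{\pm}_s(x)\}$ and $\{t>\tau^{\pm}_s(x)\}$ are jointly measurable. The key point that makes the \emph{modified} objects work, as opposed to the original ones, is that $(s,t,\omega)\mapsto(\widetilde\varepsilon^{\pm}_{s,t}(\omega),\widetilde U^{\pm}_{s,t}(\omega))$ is jointly measurable \emph{by construction}: it is defined pathwise as $\limsup_{n\to\infty}(\varepsilon^{\pm}_{s_n,t_n},U^{\pm}_{s_n,t_n})$ with $(s_n,t_n)=(\frac{\lfloor ns\rfloor+1}{n},\frac{\lfloor nt\rfloor-1}{n})$, i.e. a countable $\limsup$ of maps each of which is, for fixed $n$, a composition of the measurable step function $(s,t)\mapsto(s_n,t_n)$ with the measurable map $(u,v,\omega)\mapsto(\varepsilon^{\pm}_{u,v}(\omega),U^{\pm}_{u,v}(\omega))$ (measurable in $\omega$ for fixed $(u,v)$, and the step function takes only countably many values of $(u,v)$, so the composite is jointly measurable). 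Then $\widetilde\varphi^{\pm}_{s,t}(x,\omega)$ and $\widetilde K^{\pm}_{s,t}(x,\omega)$ are obtained from $W_{s,t}$, $W^{\pm}_{s,t}$, $\widetilde\varepsilon^{\pm}_{s,t}$, $\widetilde U^{\pm}_{s,t}$, $\mathrm{sgn}(x)$, $x$ and the two indicator functions by sums, products and the maps $a\mapsto\delta_a$ (continuous from $\R$ to $\mathcal P(\R)$) and $(p,a,b)\mapsto p\delta_a+(1-p)\delta_b$ (continuous from $[0,1]\times\R^2$ to $\mathcal P(\R)$), all of which are measurable; composition of measurable maps then yields the claim.

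The main obstacle, and the only place requiring genuine care, is establishing joint measurability of the hitting time $\tau^{\pm}_s(x)$ in the triple $(s,x,\omega)$ and, correspondingly, of the truncation events $\{t\lessgtr\tau^{\pm}_s(x)\}$ in $(s,t,x,\omega)$; everything else is bookkeeping once one notes that $\widetilde\varepsilon^{\pm},\widetilde U^{\pm}$ were \emph{defined} so as to be jointly measurable. I would handle the hitting time by reducing, via path continuity, to a countable family of events of the form $\{\sup_{u\in[s,r]}(\pm(W_u-W_s))>\mp|x|\}$ over rational $r>s$, each of which is jointly measurable in $(s,x,\omega)$ by the same inf/sup-over-rationals argument used for $W^{\pm}_{s,t}$, and then express $\{t\le\tau^{\pm}_s(x)\}$ as a countable combination of such events together with $\{t\le$ rational$\}$-type conditions.
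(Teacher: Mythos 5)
Your proposal is correct and takes essentially the same route as the paper: (ii) follows directly from Lemma \ref{aa}(i), and (i) is proved by checking joint measurability of each ingredient, with the only nontrivial point being the hitting time $\tau^{\pm}_s(x)$, which the paper handles via $\{\tau^+_s(x)>t\}=\{\inf_{s\le r\le t}W_{s,r}+|x|>0\}$ — the same inf-over-the-path reduction you use. Your explicit justification that $(s,t,\omega)\mapsto(\widetilde\varepsilon^{\pm}_{s,t},\widetilde U^{\pm}_{s,t})$ is jointly measurable (countable $\limsup$ of compositions with the countably-valued step map $(s,t)\mapsto(s_n,t_n)$) is simply the spelled-out version of the paper's ``clearly.''
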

\begin{proof}
\textbf{(i)} Clearly, the mapping
 $$(s,t,\omega)\longmapsto (\widetilde{\varepsilon}^{\pm}_{s,t}(\omega), \widetilde{U}^{\pm}_{s,t}(\omega), W_{s,t}(\omega))$$
 is measurable. For all $t\geq s$, we have
$$\{\tau^{+}_s(x)>t\}=\{\inf_{s\leq r\leq t}W_{s,r}+|x|>0\}$$
which shows that $(s,x,\omega)\longmapsto \tau^{+}_s(x,\omega)$ is measurable and a fortiori $(s,x,\omega)\longmapsto \tau^{-}_s(x,\omega)$ is also measurable. 
\textbf{(ii)} is a consequence of Lemma \ref{aa} (i).
\end{proof}
\noindent To simplify the notation, throughout the rest of the paper, we will denote $\widetilde{\varepsilon}^{\pm}_{s,t}, \widetilde{U}^{\pm}_{s,t},\widetilde{\varphi}^{\pm}_{s,t}, \widetilde{K}^{\pm}_{s,t}$ simply by $\varepsilon^{\pm}_{s,t}, U^{\pm}_{s,t},{\varphi}^{\pm}_{s,t}, K^{\pm}_{s,t}$.
\subsection{The construction of $K^{m^+,m^-}$. \label{ss}\newline}
In this paragraph, we construct a stochastic flow of kernels $K^{m^+,m^-}$ and a stochastic flow of mappings $\varphi$ respectively from $(K^+,K^-)$ and from $(\varphi^+,\varphi^-)$. Let
\begin{equation}\label{mom}
\rho_s=\inf\{r\geq s,\ \sup(W^{+}_{s,r},W^{-}_{s,r})=l\}.
\end{equation}
We first define $(\varphi_{s,t})_{s\leq t\leq \rho_s}$. For $t\in[s,\rho_s]$, set
\begin{eqnarray}
\varphi_{s,t}(1)&=&\exp(i\varphi^+_{s,t}(0)),\nonumber\\
\varphi_{s,t}(e^{il})&=&\exp(i(l+\varphi^-_{s,t}(0)))\nonumber\
\end{eqnarray}
and for $z\in\mathscr C\setminus\{1,e^{il}\}$ and $t\in[s,\rho_s]$, set
\begin{eqnarray}
\varphi_{s,t}(z)&=&ze^{i\epsilon(z)W_{s,t}} 1_{\{t\leq \tau_s(z)\}}\nonumber\\
&+&\left(\varphi_{s,t}(1)1_{\{ze^{i\epsilon(z)W_{s,\tau_{s}(z)}}=1\}}+\varphi_{s,t}(e^{il})1_{\{ze^{i\epsilon(z)W_{s,\tau_{s}(z)}}=e^{il}\}}\right)1_{\{t>\tau_s(z)\}},\nonumber\
\end{eqnarray}
where $$\tau_s(z)=\inf\{r\geq s,\ ze^{i\epsilon(z)W_{s,r}}=1\ \textrm{or}\ e^{il}\}.$$
Note that on $\{\tau_s(z)<\rho_s\}\cap\{ze^{i\epsilon(z)W_{s,\tau_{s}(z)}}=1\}$, we have $W^{+}_{s,\tau_s(z)}=0$ and consequently $\varphi_{s,\tau_s(z)}(1)=1$. Also, on $\{\tau_s(z)<\rho_s\}\cap\{ze^{i\epsilon(z)W_{s,\tau_{s}(z)}}=e^{il}\}$, we have $W^{-}_{s,\tau_s(z)}=0$ and so $\varphi_{s,\tau_s(z)}(e^{il})=e^{il}$.\\
Since $(s,\omega)\longmapsto \rho_s(\omega)$ and $(s,z,\omega)\longmapsto \tau_s(z,\omega)$ are measurable, it follows from Lemma \ref{feda} that $$(s,t,z,\omega)\longmapsto \varphi_{s,t}(z,\omega) 1_{\{s\leq t\leq \rho_s(\omega)\}}$$
is measurable from $\{(s,t,z,\omega), s\leq t, z\in\mathscr C,\omega\in\Omega\}$ into $\mathscr C$. Now we consider the sequence of stopping times $(\rho^k_s)_{k\geq 0}$ such that $\rho^0_s=s$ and $\rho^{k+1}_s=\rho_{\rho^k_s}$ for $k\geq 0$.\\
Define for all $s\leq t$,
$$\varphi_{s,t}=\sum_{k\geq0} 1_{\{\rho^k_s\leq t<\rho^{k+1}_s\}}\varphi_{\rho^k_s,t}\circ \varphi_{\rho^{k-1}_s,\rho^k_s}\circ\cdots\circ \varphi_{s,\rho_s}.$$
Then $(s,t,z,\omega)\longmapsto \varphi_{s,t}(z,\omega)$ is measurable from $\{(s,t,z,\omega), s\leq t, z\in\mathscr C,\omega\in\Omega\}$ into $\mathscr C$. By the same way, we define $(K^{m^+,m^-}_{s,t})_{s\leq t\leq\rho_s}$ for $t\in[s,\rho_s]$
\begin{eqnarray}
K^{m^+,m^-}_{s,t}(1)&=&U^+_{s,t}\delta_{\exp(iW^+_{s,t})}+(1-U^{+}_{s,t})\delta_{\exp(-iW^+_{s,t})},\nonumber\\
K^{m^+,m^-}_{s,t}(e^{il})&=&U^-_{s,t}\delta_{\exp(i(l+W^-_{s,t}))}+(1-U^{-}_{s,t})\delta_{\exp(i(l-W^-_{s,t}))}\nonumber\
\end{eqnarray}
and for $z\in\mathscr C\setminus\{1,e^{il}\}$ and $t\in[s,\rho_s]$
\begin{eqnarray}
K^{m^+,m^-}_{s,t}(z)&=&\delta_{ze^{i\epsilon(z)W_{s,t}}} 1_{\{t\leq \tau_s(z)\}}\nonumber\\
&+&\left(K^{m^+,m^-}_{s,t}(1)1_{\{ze^{i\epsilon(z)W_{s,\tau_{s}(z)}}=1\}}+K^{m^+,m^-}_{s,t}(e^{il})1_{\{ze^{i\epsilon(z)W_{s,\tau_{s}(z)}}=e^{il}\}}\right)1_{\{t>\tau_s(z)\}}.\nonumber\
\end{eqnarray}
Define now for all $s\leq t$,
$$K^{m^+,m^-}_{s,t}=\sum_{k\geq0} 1_{\{\rho^k_s\leq t<\rho^{k+1}_s\}}K^{m^+,m^-}_{s,\rho_s}\cdots K^{m^+,m^-}_{\rho^{k-1}_s,\rho^k_s}K^{m^+,m^-}_{\rho^k_s,t}.$$
Then $(s,t,z,\omega)\longmapsto K^{m^+,m^-}_{s,t}(z,\omega)$ is measurable from $\{(s,t,z,\omega), s\leq t, z\in\mathscr C,\omega\in\Omega\}$ into $\mathcal P(\mathscr C)$.\\
For every choice $s_1<t_1<\cdots<s_n<t_n$, $(\varphi_{s_i,t_i},K^{m^+,m^-}_{s_i,t_i})$ is $\sigma(\varepsilon_{u,v}^{+},\varepsilon_{u,v}^{-},U_{u,v}^{+},U_{u,v}^{-},W_{u,v}, s_i\leq u\leq v\leq t_i )$ measurable and these  $\sigma$-fields are independent for $1\leq i\leq n$ by construction. This implies the independence of the family $\{(\varphi_{s_i,t_i},K^{m^+,m^-}_{s_i,t_i}),\ 1\leq i\leq n\}$. It is also clear that the laws of $\varphi_{s,t}$ and $K^{m^+,m^-}_{s,t}$ only depend on $t-s$.

\subsection{The flow property for $K^{m^+,m^-}$ and $\varphi$.\newline}
To prove the flow property for both $\varphi$ and $K^{m^+,m^-}$, we start by the following
\begin{prop}\label{b}
Let $S$ and $T$ be two finite $(\mathcal F^W_{-\infty,r})_{r\in\R}$-stopping times such that $S \leq T\leq \rho_S$. Then a.s.  for all $u\in[T,\rho_S], z\in\mathscr C$, we have
$$\varphi_{S,u}(z)=\varphi_{T,u}\circ \varphi_{S,T}(z)$$
and $$K^{m^+,m^-}_{S,u}(z)=K^{m^+,m^-}_{S,T}K^{m^+,m^-}_{T,u} (z).$$
\end{prop}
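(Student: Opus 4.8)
The plan is to reduce the flow property on the circle, stated for the pair $(S,T)$ of $W$-stopping times, to the flow property for the underlying Tanaka flows $\varphi^{\pm}$ and $K^{\pm}$ on $\R$ together with the algebraic structure of the definition of $\varphi$ and $K^{m^+,m^-}$ on $[s,\rho_s]$. First I would treat the two distinguished starting points $z=1$ and $z=e^{il}$. By construction, on $[S,\rho_S]$ we have $\varphi_{S,u}(1)=\exp(i\varphi^{+}_{S,u}(0))$, and since $T\le\rho_S$ forces $W^{+}$ not to have reached level $l$, the point $\varphi_{S,T}(1)=\exp(i\varphi^{+}_{S,T}(0))$ lies in the closed arc from $1$ of length $<l$, so its $\epsilon$-value is $+1$ and its Tanaka trajectory on $[T,\rho_S]$ is again governed by $+W$; thus $\varphi_{T,u}\circ\varphi_{S,T}(1)$ unwinds to $\exp\big(i\,\varphi^{+}_{T,u}(\varphi^{+}_{S,T}(0))\big)$, and the identity follows from the flow property of $\varphi^{+}$ on $\R$ applied at the stopping times $S\le T$ (Theorem \ref{mama}, and the fact that $\varphi^{+}$ as constructed in Section \ref{klp}--\ref{chayynt} enjoys the flow property along $\mathcal F^W$-stopping times). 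The symmetric argument with $\varphi^{-}$, $-W$, $W^{-}$ handles $z=e^{il}$. The kernel statements for $1$ and $e^{il}$ are identical, using $K^{\pm}$ in place of $\varphi^{\pm}$ and the composition of kernels.

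Next I would handle a generic $z\in\mathscr C\setminus\{1,e^{il}\}$ by splitting on the hitting time $\tau_S(z)=\inf\{r\ge S: ze^{i\epsilon(z)W_{S,r}}\in\{1,e^{il}\}\}$ and comparing it with $T$. On $\{T\le\tau_S(z)\}$: up to time $T$ the trajectory is the deterministic rotation $ze^{i\epsilon(z)W_{S,\cdot}}$, so $\varphi_{S,T}(z)=ze^{i\epsilon(z)W_{S,T}}$, which is still in $\mathscr C^{+}$ or $\mathscr C^{-}$ (not yet at a vertex) with the same $\epsilon$-value $\epsilon(z)$; hence for $u\in[T,\rho_S]$ the trajectory started at $\varphi_{S,T}(z)$ is $\varphi_{S,T}(z)e^{i\epsilon(z)W_{T,u}}=ze^{i\epsilon(z)(W_{S,T}+W_{T,u})}=ze^{i\epsilon(z)W_{S,u}}$ as long as it has not hit a vertex, and the hitting time is exactly $\tau_S(z)$; at and after $\tau_S(z)$ both sides are routed into $\varphi_{\cdot,\cdot}(1)$ or $\varphi_{\cdot,\cdot}(e^{il})$ according to which vertex is hit, reducing to the vertex case already proved. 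On $\{T>\tau_S(z)\}$: then $\varphi_{S,T}(z)$ already equals $\varphi_{S,T}(1)$ or $\varphi_{S,T}(e^{il})$ (and the "note'' after the definition, giving $W^{+}_{S,\tau_S(z)}=0$ resp.\ $W^{-}_{S,\tau_S(z)}=0$, guarantees $\varphi_{S,\tau_S(z)}(1)=1$ resp.\ $\varphi_{S,\tau_S(z)}(e^{il})=e^{il}$), so $\varphi_{T,u}\circ\varphi_{S,T}(z)=\varphi_{T,u}(1)$ or $\varphi_{T,u}(e^{il})$, while $\varphi_{S,u}(z)$ equals $\varphi_{S,u}(1)$ or $\varphi_{S,u}(e^{il})$ for the same vertex; so again the identity is the vertex identity. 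The same case analysis works verbatim for $K^{m^+,m^-}$, the only change being that "the trajectory is routed to the vertex'' becomes "the kernel is routed to $K^{m^+,m^-}_{\cdot,\cdot}(1)$ or $K^{m^+,m^-}_{\cdot,\cdot}(e^{il})$'', and bilinearity of kernel composition lets one distribute $K^{m^+,m^-}_{S,T}(z)$ over these indicators.

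The one genuine subtlety — and the step I expect to be the main obstacle — is upgrading the flow property of $\varphi^{\pm}$, $K^{\pm}$ from fixed times to the stopping times $S\le T\le\rho_S$, and, relatedly, checking that the exceptional null sets can be removed uniformly. Here I would use the measurable modifications $\widetilde\varphi^{\pm},\widetilde K^{\pm}$ from Lemma \ref{feda}, whose dependence on $(s,t,x,\omega)$ is jointly measurable, so that quantities like $\varphi^{+}_{S,T}(0)$ and $\varphi^{+}_{T,\rho_S}(\cdot)$ are bona fide random variables; then the strong Markov property of $W$ (the increments $W_{T,\cdot}$ being independent of $\mathcal F^W_{-\infty,T}$, and the extra randomness $(\varepsilon^{\pm},U^{\pm})$ being attached to $W$ through its running infimum/supremum via property (iv)--(v)) gives that $(\varphi^{+}_{T,T+\cdot},K^{+}_{T,T+\cdot})$ is, conditionally on $\mathcal F^W_{-\infty,T}$, a fresh Tanaka flow, which yields the flow identity at $(S,T)$ first for all rational $u\ge T$ and then for all $u\in[T,\rho_S]$ by right-continuity/continuity of both sides in $u$. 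A clean way to organize this is to note that $\rho_S$ and $\tau_S(z)$ are themselves $(\mathcal F^W_{-\infty,r})$-stopping times and to reduce everything, via the concatenation formula defining $\varphi_{s,t}$ and $K^{m^+,m^-}_{s,t}$ over the successive intervals $[\rho^k_S,\rho^{k+1}_S]$, to the single-interval statement on $[s,\rho_s]$ proved above; since the hypothesis already restricts to $u\le\rho_S$, no concatenation across several $\rho$-intervals is actually needed for this proposition, which keeps the bookkeeping light.
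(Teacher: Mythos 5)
Your proposal correctly identifies the overall structure (treat the vertices $1,e^{il}$ first, then split a general $z$ on whether $T\lessgtr\tau_S(z)$ and route to the vertex identity), and it correctly flags that the crux is upgrading the flow property to $\mathcal F^W$-stopping times. But the resolution you propose for that crux has a real gap.

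First, a small but telling error: you assert that on $[S,\rho_S]$ the point $\varphi_{S,T}(1)=\exp(i\varphi^+_{S,T}(0))$ has $\epsilon$-value $+1$ because it lies within angular distance $<l$ of $1$. That is false. One has $\varphi^+_{S,T}(0)=\varepsilon^+_{S,T}W^+_{S,T}$, so $\varphi_{S,T}(1)$ is $e^{iW^+_{S,T}}\in\mathscr C^+$ when $\varepsilon^+_{S,T}=1$ but $e^{-iW^+_{S,T}}\in\mathscr C^-$ when $\varepsilon^+_{S,T}=-1$. The paper's computation in its case (iii) is careful on exactly this point: it shows $\epsilon(Z)=\varepsilon^+_{S,T}$ (not $+1$), and the whole argument then has to track which of $\varepsilon^+_{S,T},\varepsilon^+_{S,u},\varepsilon^+_{T,u}$ coincide.

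Second, and more importantly, your key step — that the strong Markov property of $W$ together with property (iv)--(v) shows $(\varphi^+_{T,T+\cdot},K^+_{T,T+\cdot})$ is, conditionally on $\mathcal F^W_{-\infty,T}$, a fresh Tanaka flow, and that this ``yields the flow identity at $(S,T)$'' — does not close. The flow identity $\varphi^+_{S,u}(x)=\varphi^+_{T,u}(\varphi^+_{S,T}(x))$ is a \emph{pathwise coupling} statement between $\varphi^+_{S,\cdot}$ and $\varphi^+_{T,\cdot}$, and knowing the conditional \emph{law} of $\varphi^+_{T,T+\cdot}$ gives no information about that coupling. What actually produces the coupling is the pathwise content of Lemma~\ref{aa}~(ii): on a full-measure event $\Omega_1$, whenever $(s,t)$ and $(u,v)$ are both in $\mathscr D^{\pm}$ with $\textrm{m}^{\pm}_{s,t}=\textrm{m}^{\pm}_{u,v}$, the random signs agree, $\varepsilon^{\pm}_{s,t}=\varepsilon^{\pm}_{u,v}$. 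The paper's proof works entirely on $\Omega_1\cap\Omega_2$ (where $\Omega_2$ ensures $W$ takes strictly smaller/larger values immediately after $S$ and after $T$, so the relevant pairs such as $(S,u),(T,u)$ really do lie in $\mathscr D^{\pm}$), and then checks four cases $E_{(i)},\dots,E_{(iv)}$ by tracking equalities of running minima/maxima and invoking $\Omega_1$ to equate the corresponding $\varepsilon$'s. Your ``rational $u$ then continuity'' plan is also insufficient on its own: the problem is not continuity in $u$ but that the flow identity is being asserted at \emph{random} initial times $S\le T$, which the fixed-time flow property and a distributional Markov argument do not reach.

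In short: your high-level reduction (vertices, then $\tau_S(z)\lessgtr T$) mirrors the paper, but the engine of the proof must be the pathwise identification of $\varepsilon^{\pm}$ via Lemma~\ref{aa}~(ii) on $\Omega_1\cap\Omega_2$, not a strong-Markov/fresh-flow argument. If you want to route through a flow property of $\varphi^{\pm}$ at stopping times on $\R$ (which is a reasonable organizing idea), that statement itself must be proved by the same pathwise case analysis; the paper chooses to carry it out directly on the circle flow.
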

\begin{proof}
Define
\begin{eqnarray}
\Omega_1&=&\{\omega\in\Omega : \; \forall\ (s_1,t_1), (s_2,t_2)\in \mathscr {D}^{\pm},\; m^{\pm}_{s_1,t_1}=m^{\pm}_{s_2,t_2}\Rightarrow \varepsilon^{\pm}_{s_1,t_1}=\varepsilon^{\pm}_{s_2,t_2}\}\nonumber\\
\Omega_2&=&\{\omega\in\Omega : \; m^+_{T,T+r}<W_T<m^-_{T,T+r},\; m^+_{S,S+r}<W_S<m^-_{S,S+r}\ \textrm{for all}\ r>0\}.\nonumber\
\end{eqnarray}
Then $\mathbb P(\Omega_1)=1$ (see Lemma \ref{aa} (ii)). It is also known that $\mathbb P(\Omega_2)=1$ (see \cite{MR1725357} page 94). We will prove the proposition on the set of probability $1$: $\tilde\Omega=\Omega_1\cap\Omega_2$ and we first prove the result for $\varphi$. From now on, we fix $\omega\in\tilde\Omega$. Define
\begin{eqnarray}
E_{(i)}&=&\{(u,z) :\;T\leq u\leq \rho_S, u<\tau_S(z)\},\nonumber\\
E_{(ii)}&=&\{(u,z) : \;T<\tau_S(z)\leq u\leq\rho_S\},\nonumber\\
E_{(iii)}&=&\{(u,z) :\; \tau_S(z)\leq T\leq u\leq \rho_S, u<\tau_T(\varphi_{S,T}(z))\},\nonumber\\
E_{(iv)}&=&\{(u,z) : \;\tau_S(z)\leq T\leq\tau_T(\varphi_{S,T}(z))\leq u\leq \rho_S\}.\nonumber\
\end{eqnarray}
Then $E_{(i)}\cup E_{(ii)}\cup E_{(iii)}\cup E_{(iv)}=[T,\rho_S]\times \mathscr C$. For all $z\in\mathscr C$, set $Z=\varphi_{S,T}(z)$ and $\theta=\arg(z)$.\\
\textbf{(i)} Let $(z,u)\in E_{(i)}$. Then as $T<\tau_S(z)$, we have $\theta\notin \{0,l\}, Z=ze^{i\epsilon(z)W_{S,T}}$ and
\begin{eqnarray}
\tau_T(Z)&=&\inf\{r\geq T,\ Ze^{i\epsilon(Z)W_{T,r}}=1\ \textrm{or}\ e^{il}\}\nonumber\\
&=&\inf\{r\geq T,\ ze^{i(\epsilon(z)W_{S,T}+\epsilon(Z)W_{T,r})}=1\ \textrm{or}\ e^{il}\}=\tau_S(z)\nonumber\
\end{eqnarray}
since $\epsilon(z)=\epsilon(Z)$. Therefore $u<\tau_T(Z)$ and $\varphi_{T,u}\circ \varphi_{S,T}(z)=Ze^{i\epsilon(Z)W_{T,u}}=ze^{i\epsilon(z)W_{S,u}}=\varphi_{S,u}(z)$.\\
\textbf{(ii)} Let $(z,u)\in E_{(ii)}$. Then, we still have $\tau_T(Z)=\tau_S(z)$ and $\varphi_{T,\tau_{T}(Z)}(Z)=\varphi_{S,\tau_{S}(z)}(z)$. Recall that
$$\varphi_{S,u}(z)=\varphi_{S,u}(1) 1_{\{\varphi_{S,\tau_S(z)}(z)=1\}}+\varphi_{S,u}(e^{il})1_{\{\varphi_{S,\tau_S(z)}(z)=e^{il}\}}$$
and $$\varphi_{T,u}(Z)=\varphi_{T,u}(1) 1_{\{\varphi_{T,\tau_T(Z)}(Z)=1\}}+\varphi_{T,u}(e^{il})1_{\{\varphi_{T,\tau_T(Z)}(Z)=e^{il}\}}.$$
Suppose for example $\varphi_{S,\tau_S(z)}(z)=\varphi_{T,\tau_T(Z)}(Z)=1 $, then $W^+_{T,\tau_T(Z)}=W^+_{S,\tau_S(z)}=0$ and so $W^+_{T,r}=W^+_{S,r}$ (and a fortiori $m^+_{T,r}=m^+_{S,r}$) for all $r\geq \tau_T(Z)(=\tau_S(z))$. From the definition,
$$\varphi_{S,u}(z)=\varphi_{S,u}(1)=\exp(i\varphi^+_{S,u}(0))\ \textrm{and}\ \ \varphi_{T,u}(Z)=\varphi_{T,u}(1)=\exp(i\varphi^+_{T,u}(0)).$$
If $W^+_{T,u}=W^{+}_{S,u}=0$, then $\varphi_{S,u}(z)=\varphi_{T,u}(Z)=1$. Suppose that $W^{+}_{T,u}=W^{+}_{S,u}>0$, then $W_u>\textrm{m}^+_{T,u}$ and $W_u>\textrm{m}^+_{S,u}$. Since $\omega\in\Omega_2$, we have $$W_T>\textrm{m}^+_{T,u}\ \textrm{and}\ W_S>\textrm{m}^+_{S,u}.$$
In other words, $(T,u)$ and $(S,u)$ are in $\mathscr D^+$ so that $\varepsilon^+_{S,u}=\varepsilon^+_{T,u}$ and $\varphi_{T,u}(Z)=\varphi_{S,u}(z)$.\\
\textbf{(iii)} Let $(z,u)\in E_{(iii)}$. Assume for example that $\varphi_{S,\tau_S(z)}(z)=1$, then $Z=\varphi_{S,T}(1)=e^{i\varphi^+_{S,T}(0)}$ since $T\leq\rho_S$ and
\begin{eqnarray}
\varphi_{T,u}(Z)&=&\exp(i(\varphi^+_{S,T}(0)+\epsilon(Z)W_{T,u}))\nonumber\\
&=&\exp(i(\varepsilon^+_{S,T}W^+_{S,T}+\epsilon(Z)W_{T,u})).\nonumber\
\end{eqnarray}
As $T\leq u<\tau_T(Z)$, it follows that $Z\notin \{1,e^{il}\}$ (if $Z\in \{1,e^{il}\}$, then $\tau_T(Z)=T$), $\epsilon(Z)=\varepsilon^+_{S,T}$ and so $\varphi_{T,u}(Z)=Z\exp(i\varepsilon^+_{S,T}W_{T,u})=\exp(i\varepsilon^+_{S,T}(W_u-\textrm{m}^+_{S,T}))$. As $Z\neq 1$, we necessarily have $W^+_{S,T}>0$. Thus if $\varepsilon^+_{S,T}=1$,
$$\tau_T(Z)=\inf\{r\geq T:\ W_r-\textrm{m}^+_{S,T}=0\ \textrm{or}\ \ l\}$$
and if $\varepsilon^+_{S,T}=-1$,
$$\tau_T(Z)=\inf\{r\geq T:\ W_r-\textrm{m}^+_{S,T}=0\ \textrm{or}\ \ 2\pi-l\}.$$
Since $u<\tau_T(Z)$, we have $\textrm{m}^+_{S,u}=\textrm{m}^+_{S,T}$ and $\varphi_{T,u}(Z)=\exp(i\varepsilon^+_{S,T}W^+_{S,u})$. On the other hand, since $u\leq\rho_S$,
$$\varphi_{S,u}(z)=\exp(i\varphi^+_{S,u}(0))=\exp(i\varepsilon^+_{S,u}W^+_{S,u}).$$
But $(S,T)\in\mathscr D^+$ (from $W^+_{S,T}>0$), $(S,u)\in\mathscr D^+$ (from $u<\tau_T(Z)$ which entails that $W^+_{S,u}>0$). Consequently $\varepsilon^+_{S,u}=\varepsilon^+_{S,T}$  and so $\varphi_{T,u}(Z)=\varphi_{S,u}(z)$. The case $\varphi_{S,\tau_S(z)}(z)=e^{il}$ can be done similarly.\\
\textbf{(iv)} Let $(z,u)\in E_{(iv)}$. Assume for example that $\varphi_{S,\tau_S(z)}(z)=1$ so that $W^+_{S,\tau_S(z)}=0$. Consider the first case: $\varepsilon^+_{S,T}=1$. Then $Z=e^{iW^+_{S,T}}$ and

$$\tau_T(Z)=\inf\{r\geq T:\ W_r-\textrm{m}^+_{S,T}\in\{0,l\}\}.$$

\noindent If $W_{\tau_T(Z)}-\textrm{m}^+_{S,T}=l$, then $u=\tau_T(Z)=\rho_S$ and $\varphi_{S,u}(z)=\varphi_{T,u}(Z)=e^{il}$.\\ If $W_{\tau_T(Z)}-\textrm{m}^+_{S,T}=0$, then $\varphi_{T,\tau_T(Z)}(Z)=1$ and $\varphi_{T,u}(Z)=\varphi_{T,u}(1)$.\\
Since $\varphi_{S,\tau_S(z)}(z)=1$, we have $\varphi_{S,u}(z)=\varphi_{S,u}(1)$. Moreover $W^+_{T,\tau_T(Z)}=W^+_{S,\tau_T(Z)}=0$, which implies $W^+_{T,u}=W^+_{S,u}$ (since $u\geq\tau_T(Z)$).\\
Now, if $u$ satisfies $W^+_{T,u}=W^+_{S,u}=0$, then $\varphi_{T,u}(Z)=\varphi_{S,u}(z)=1$. If not, $\textrm{m}^+_{T,u}=\textrm{m}^+_{S,u}$ and $(T,u), (S,u)$ are in $\mathscr D^+$. This implies $\varepsilon^+_{T,u}=\varepsilon^+_{S,u}$ and $\varphi_{T,u}(Z)=\varphi_{S,u}(z)$ exactly as in (ii).\\
Assume now that $\varepsilon^+_{S,T}=-1$, then $\tau_T(Z)$ satisfies $W_{\tau_T(Z)}-\textrm{m}^+_{S,T}=0$ (recall that $\tau_T(Z)\leq\rho_S$) and $\varphi_{T,u}(Z)=\varphi_{S,u}(z)$ as before.\\
The result for $K^{m^+,m^-}$ can be proved by replacing $\varphi_{S,T}(z)$ by $e^{iW^+_{S,T}}$ in $E_{(iii)}$ and $E_{(iv)}$. However, the proof remains similar.
\end{proof}
\begin{corl}\label{g}
Let $S\leq T$ be two finite $(\mathcal F^W_{-\infty,r})_{r\in\R}$-stopping times. Then, with probability 1, for all $u\geq T, z\in\mathscr C$, we have
$$\varphi_{S,u}(z)=\varphi_{T,u}\circ \varphi_{S,T}(z)$$
and
$$K^{m^+,m^-}_{S,u}(z)=K^{m^+,m^-}_{S,T}K^{m^+,m^-}_{T,u}(z).$$
\end{corl}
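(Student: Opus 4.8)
\emph{Proof proposal.} The plan is to deduce Corollary \ref{g} from Proposition \ref{b} by iterating the latter along the two grids $(\rho^k_S)_{k\ge0}$ and $(\rho^k_T)_{k\ge0}$. I only write the argument for $\varphi$; the one for $K^{m^+,m^-}$ is identical (composition of maps being replaced by composition of kernels, exactly as at the end of the proof of Proposition \ref{b}). Everything below takes place on a single event of probability $1$, namely the intersection of $\{\rho^k_S\uparrow\infty\}$, $\{\rho^k_T\uparrow\infty\}$ and the events (one for each pair) on which the conclusion of Proposition \ref{b} holds for the pairs in the countable family $\{(S,T)\}\cup\{(\rho^{j-1}_T,\rho^j_S),(\rho^j_S,\rho^j_T):j\ge1\}$.

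Two elementary facts are used repeatedly. First, since $\rho^{k+1}_s=\rho_{\rho^k_s}$ one gets $\rho^j_{\rho^k_s}=\rho^{k+j}_s$ by induction, and substituting this into the defining formula for $\varphi$ shows that for every finite stopping time $s$, every $k\ge0$ and every $u\ge\rho^k_s$ one has $\varphi_{s,u}=\varphi_{\rho^k_s,u}\circ\varphi_{s,\rho^k_s}$ (and likewise for $K^{m^+,m^-}$); this is a mere rearrangement of a composition and uses no probability. Second, if $s\le s'$ then, for $r\ge s'$, $W^+_{s,r}\ge W^+_{s',r}$ and $W^-_{s,r}\ge W^-_{s',r}$, so $r\mapsto\sup(W^+_{s,r},W^-_{s,r})$ dominates $r\mapsto\sup(W^+_{s',r},W^-_{s',r})$ on $[s',\infty)$; being continuous, vanishing at $r=s$ and being $\ge l$ at $r=\rho_{s'}$, it must (since $l>0$) reach $l$ somewhere in $[s,\rho_{s'}]$, i.e.\ $\rho_s\le\rho_{s'}$. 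Iterating gives $\rho^k_S\le\rho^k_T$ for all $k$, and, once $T\le\rho_S$, also $\rho^k_T\le\rho^{k+1}_S$ for all $k\ge0$ (with $\rho^0_T:=T$).

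\emph{Reduction.} Let $k$ be such that $\rho^k_S\le T<\rho^{k+1}_S$. By the first fact, $\varphi_{S,u}=\varphi_{\rho^k_S,u}\circ\varphi_{S,\rho^k_S}$ for all $u\ge T$ and $\varphi_{S,T}=\varphi_{\rho^k_S,T}\circ\varphi_{S,\rho^k_S}$, so $\varphi_{S,u}=\varphi_{T,u}\circ\varphi_{S,T}$ will follow as soon as $\varphi_{\rho^k_S,u}=\varphi_{T,u}\circ\varphi_{\rho^k_S,T}$ for all $u\ge T$; renaming $\rho^k_S$ as $S$, we may thus assume $S\le T\le\rho_S$. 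Now set $a_0=T$, $a_{2j-1}=\rho^j_S$, $a_{2j}=\rho^j_T$ for $j\ge1$. By the second fact $(a_i)_{i\ge0}$ is nondecreasing and tends to $\infty$, and $\rho_{a_i}=a_{i+2}$ for every $i$, so Proposition \ref{b} applies to $(a_i,a_{i+1})$ and gives the desired identities for $u\in[a_{i+1},a_{i+2}]$. We prove by induction on $i\ge0$ that $\varphi_{S,u}=\varphi_{T,u}\circ\varphi_{S,T}$ for all $u\in[a_i,a_{i+1}]$ and all $z\in\mathscr C$; since these intervals cover $[T,\infty)$, this proves the corollary. The case $i=0$ is Proposition \ref{b} applied to $(S,T)$. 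In the step $i\to i+1$, one restarts $\varphi_{S,\cdot}$ at the largest $S$-grid point $\le a_{i+1}$ and $\varphi_{T,\cdot}$ at the largest $T$-grid point $\le a_{i+1}$ (first fact), inserts the induction hypothesis at the single point $u=a_{i+1}$, and uses the identities provided by Proposition \ref{b} for the pair $(a_i,a_{i+1})$ over $u\in[a_{i+1},a_{i+2}]$; composing these three relations yields $\varphi_{S,u}=\varphi_{T,u}\circ\varphi_{S,T}$ on $[a_{i+1},a_{i+2}]$.

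No new estimate is needed; the one thing requiring care is the bookkeeping: tracking which grid each $a_i$ belongs to (alternately $S$'s and $T$'s), checking via the monotonicity of $\rho$ that every pair to which Proposition \ref{b} is applied meets its hypothesis, and noting that all exceptional null sets come from a countable family of pairs of stopping times and so may be discarded at once. Degenerate situations (two consecutive $a_i$ coinciding, or some $\rho^j_S$ being itself a $T$-grid point) make an interval shrink to a point and are trivial.
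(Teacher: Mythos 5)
Your overall plan — iterating Proposition \ref{b} along the interleaved grid $T, \rho_S, \rho_T, \rho^2_S, \rho^2_T, \ldots$ — is exactly the idea of the paper, and the combinatorics of your induction step is sound. However, your execution has a real gap at the reduction step, and it propagates into the choice of pairs on which Proposition \ref{b} is invoked.

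The problem is that "Let $k$ be such that $\rho^k_S\le T<\rho^{k+1}_S$" fixes a \emph{random} index $k=k(\omega)$, and $\rho^{k(\omega)}_S$, being the last $\rho$-grid point before $T$, is of last-exit type and is \emph{not} an $(\mathcal F^W_{-\infty,r})$-stopping time. Consequently, after your "renaming $\rho^k_S$ as $S$", the pairs $(a_i,a_{i+1})$ to which you apply Proposition \ref{b} are not pairs of stopping times, so the proposition does not apply to them. You partly mask this by listing a countable family of pairs on which Proposition \ref{b} is assumed to hold, namely $\{(S,T)\}\cup\{(\rho^{j-1}_T,\rho^j_S),(\rho^j_S,\rho^j_T): j\ge1\}$; but if these $\rho^j_S$ denote the grid of the original $S$ then the hypothesis $S'\le T'\le\rho_{S'}$ fails already for $(S,T)$ (one needs $T\le\rho_S$, which is false in general), and if they denote the grid of the renamed $S$ then the family again depends on the random $k$. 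In neither reading is Proposition \ref{b} legitimately applied. The fix, which is what the paper does, is to fix $k\in\N$ deterministically, introduce the truncated stopping times $T^0=(T\vee\rho^k_S)\wedge\rho^{k+1}_S$, $T^{i}=\rho_{T^{i-1}}$ and $S^1=(T\vee\rho^{k+1}_S)\wedge\rho^1_T$, $S^{i+1}=\rho_{S^i}$ (these are genuine stopping times satisfying the hypotheses of Proposition \ref{b} unconditionally), run your interleaved-grid argument with $(T^i)$ and $(S^i)$, and then observe that on $\{\rho^k_S\le T<\rho^{k+1}_S\}$ one has $T^i=\rho^i_T$ and $S^i=\rho^{k+i}_S$, which recovers exactly the identity you want on that event. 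Summing over $k$ then yields the statement on a single full-measure event. Once this bookkeeping is done your telescoping induction is fine, and the $K^{m^+,m^-}$ case does follow identically.
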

\begin{proof}
Fix $k\in\N$ and define the family of $(\mathcal F^W_{-\infty,r})_{r\in\R}$-stopping times $(T^i)_{i\geq 0}$ by $T^0=(T\vee \rho^k_S)\wedge \rho^{k+1}_S$ and $T^i=\rho_{T^{i-1}}$ for $i\geq 1$. As $r\longmapsto\rho_r$ is increasing, we have $\rho^{k+i}_S\leq T^i\leq \rho^{k+i+1}_S$ for all $i\geq 0$. Applying successively Proposition \ref{b}, we have a.s. for all $z\in\mathscr C$, $i\geq0$ and all $u\in[\rho^{k+i}_S,T^i]$ , $$\varphi_{S,u}(z)=\varphi_{\rho^{k+i}_S,u}\circ \varphi_{T^{i-1},\rho^{k+i}_S}\circ\cdots\circ \varphi_{T^0,\rho^{k+1}_S}\circ \varphi_{\rho^k_S,T^0}\circ\varphi_{S,\rho^k_S}(z)$$
and for all $u\in[T^i,\rho^{k+i+1}_S]$,
$$\varphi_{S,u}(z)=\varphi_{T^i,u}\circ\varphi_{\rho^{k+i}_S,T^i}\circ\cdots\circ \varphi_{T^0,\rho^{k+1}_S}\circ \varphi_{\rho^k_S,T^0}\circ\varphi_{S,\rho^k_S}(z).$$
On $\{\rho^k_S\leq T<\rho^{k+1}_S\}$, we have $T^i=\rho^i_T$ for all $i\geq0$ whence a.s. on $\{\rho^k_S\leq T<\rho^{k+1}_S\}$, for all $z\in\mathscr C$ and all $i\geq 0$,
$$\varphi_{S,u}(z)=\varphi_{\rho^{k+i}_S,u}\circ \varphi_{\rho^{i-1}_T,\rho^{k+i}_S}\circ\cdots\circ \varphi_{T,\rho^{k+1}_S}\circ \varphi_{S,T}(z)\ \textrm{for all}\ u\in[\rho^{k+i}_S,\rho_T^i]$$
and
$$\varphi_{S,u}(z)=\varphi_{\rho_T^i,u}\circ\varphi_{\rho^{k+i}_S,\rho_T^i}\circ\cdots\circ\varphi_{T,\rho^{k+1}_S}\circ \varphi_{S,T}(z)\ \textrm{for all}\ u\in[\rho^{i}_T,\rho_S^{k+i+1}].$$
Now define the family $(S^i)_{i\geq 1}$ of $(\mathcal F^W_{-\infty,r})_{r\in\R}$-stopping times by $S^1=(T\vee \rho^{k+1}_S)\wedge \rho^1_T$ and $S^{i+1}=\rho_{S^i}$ for $i\geq 1$. Then for all $i\geq 0$, $\rho^i_T\leq S^{i+1}\leq \rho^{i+1}_T$. Applying again Proposition \ref{b}, we get a.s. for all $z\in\mathscr C$, $i\geq 0$ and all $u\in[\rho^i_T,S^{i+1}]$,
$$\varphi_{T,u}(\varphi_{S,T}(z))=\varphi_{\rho^i_T,u}\circ\varphi_{S^i,\rho^i_T}\circ\cdots\circ\varphi_{S^1,\rho^1_T}\circ\varphi_{T,S^1}(\varphi_{S,T}(z))$$
and for all $u\in[S^{i+1},\rho^{i+1}_T]$,
$$\varphi_{T,u}(\varphi_{S,T}(z))=\varphi_{S^{i+1},u}\circ\varphi_{\rho^i_T,S^{i+1}}\circ\cdots\circ\varphi_{S^1,\rho^1_T}\circ\varphi_{T,S^1}(\varphi_{S,T}(z)).$$
On $\{\rho^k_S\leq T<\rho^{k+1}_S\}$, we have $S^i=\rho^{k+i}_S$ for all $i\geq 1$. Consequently a.s. on $\{\rho^k_S\leq T<\rho^{k+1}_S\}$, for all $z\in\mathscr C$, $i\geq 0$ and all $u\in[\rho^i_T,\rho_S^{k+i+1}]$,

$$\varphi_{T,u}(\varphi_{S,T}(z))=\varphi_{\rho^i_T,u}\circ\varphi_{\rho_S^{k+i},\rho^i_T}\circ\cdots\circ\varphi_{\rho_S^{k+1},\rho^1_T}\circ\varphi_{T,\rho_S^{k+1}}(\varphi_{S,T}(z))$$
and for all $u\in[\rho_S^{k+i+1},\rho^{i+1}_T]$.
$$\varphi_{T,u}(\varphi_{S,T}(z))=\varphi_{\rho_S^{k+i+1},u}\circ\varphi_{\rho^i_T,\rho_S^{k+i+1}}\circ\cdots\circ\varphi_{\rho_S^{k+1},\rho^1_T}\circ\varphi_{T,\rho_S^{k+1}}(\varphi_{S,T}(z)).$$
We have thus shown that a.s. for all $z\in\mathscr C$ and all $u\geq T$,
$$1_{\{\rho^k_S\leq T<\rho^{k+1}_S\}}\varphi_{T,u}\circ \varphi_{S,T}(z)=1_{\{\rho^k_S\leq T<\rho^{k+1}_S\}}\varphi_{S,u}(z).$$
By summing over $k$, we get that a.s. $\forall z\in\mathscr C$, $\forall u\geq T$, $\varphi_{T,u}\circ \varphi_{S,T}(z)=\varphi_{S,u}(z)$. The flow property for $K^{m^+,m^-}$ holds by the same reasoning.
\end{proof}
\subsection{$K^{m^+,m^-}$ can be obtained by filtering $\varphi$.\newline}
For all $-\infty\leq s\leq t\leq+\infty$, let $$\mathcal F^{U^+,U^-,W}_{s,t}=\sigma(U^+_{u,v},U^{-}_{u,v},W_{u,v};\; s\leq u\leq v\leq t)=\sigma(K^{+}_{u,v},K^{-}_{u,v};\; s\leq u\leq v\leq t).$$
Corollary \ref{g} entails the following
\begin{prop}\label{oo}
For all $z\in \mathscr C$, all $s<t$ and all continuous function $f$, a.s.
$$K^{m^+,m^-}_{s,t}f(z)=E\left[f(\varphi_{s,t}(z))\left| \mathcal F^{U^+,U^-,W}_{s,t}\right.\right].$$
\end{prop}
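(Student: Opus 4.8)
The plan is to reduce the identity to the analogous one-dimensional statement (Theorem \ref{mama} (iii)) by decomposing along the sequence of stopping times $(\rho^k_s)_{k\ge 0}$ and using the flow property established in Corollary \ref{g}. First I would observe that both sides of the claimed identity are built, via the definitions in Section \ref{ss}, out of the pieces $\varphi^\pm$, $K^\pm$ on the successive intervals $[\rho^k_s,\rho^{k+1}_s]$, and that $\varphi_{s,t}$ is a measurable function of $(\varepsilon^+,\varepsilon^-,W)$ while $K^{m^+,m^-}_{s,t}$ is the corresponding function with $U^\pm$ in place of $\varepsilon^\pm$. Fix $z\in\mathscr C$ and $s<t$, and work on the event $\{\rho^k_s\le t<\rho^{k+1}_s\}$, on which, by Corollary \ref{g},
\[
\varphi_{s,t}(z)=\varphi_{\rho^k_s,t}\circ\varphi_{\rho^{k-1}_s,\rho^k_s}\circ\cdots\circ\varphi_{s,\rho_s}(z),
\qquad
K^{m^+,m^-}_{s,t}=K^{m^+,m^-}_{s,\rho_s}\cdots K^{m^+,m^-}_{\rho^k_s,t}.
\]
It therefore suffices to prove the statement with $t$ replaced by $\rho_s$ (one "block"), and then to propagate it across blocks using the Markov-type independence built into properties (ii)--(v): conditionally on $\mathcal F^{U^+,U^-,W}_{s,t}$, the increments attached to disjoint time intervals are independent, so the conditional expectation of the composition factorizes into the composition of the conditional expectations, each of which is handled by the single-block case.

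For a single block, i.e. for $t\in[s,\rho_s]$, I would treat the three cases in the definition of $\varphi_{s,t}(z)$ and $K^{m^+,m^-}_{s,t}(z)$ separately. For $z\notin\{1,e^{il}\}$ with $t\le\tau_s(z)$, both $\varphi_{s,t}(z)$ and $K^{m^+,m^-}_{s,t}(z)$ are deterministic given $W$ (they equal $ze^{i\epsilon(z)W_{s,t}}$ and $\delta_{ze^{i\epsilon(z)W_{s,t}}}$), so the identity is trivial. For $z=1$ (and symmetrically $z=e^{il}$), the definition gives $\varphi_{s,t}(1)=\exp(i\varphi^+_{s,t}(0))$ and $K^{m^+,m^-}_{s,t}(1)=U^+_{s,t}\delta_{\exp(iW^+_{s,t})}+(1-U^+_{s,t})\delta_{\exp(-iW^+_{s,t})}$; since on $[s,\rho_s]$ one has $W^+_{s,t}<l$, the exponential map $x\mapsto e^{ix}$ is injective on the relevant range, so computing $E[f(\exp(i\varphi^+_{s,t}(0)))\mid\mathcal F^{U^+,U^-,W}_{s,t}]$ amounts to computing $E[g(\varphi^+_{s,t}(0))\mid K^+]$ for $g(x)=f(e^{ix})$ (using that conditioning on $\mathcal F^{U^+,U^-,W}_{s,t}$ restricted to this block is, by independence of $K^+$ and $K^-$ given $W$, the same as conditioning on $K^+$), which is exactly Theorem \ref{mama} (iii). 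The last case, $z\notin\{1,e^{il}\}$ with $\tau_s(z)<t\le\rho_s$, reduces to the $z\in\{1,e^{il}\}$ case by the strong Markov property at $\tau_s(z)$ together with Corollary \ref{g}, since at that time the image point is one of the two vertices.

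The main obstacle I anticipate is bookkeeping the conditioning correctly across blocks: one must check that $E[f(\varphi_{s,t}(z))\mid\mathcal F^{U^+,U^-,W}_{s,t}]$ genuinely factors as a composition of kernel actions, i.e. that for a composition $\varphi_{\rho^k_s,t}\circ\cdots\circ\varphi_{s,\rho_s}$ the conditional law given $\mathcal F^{U^+,U^-,W}_{s,t}$ is the product of the conditional laws on the successive sub-intervals. This is where properties (ii)--(v) (independence of the blocks given $W$, and the precise conditional laws of $(\varepsilon^\pm,U^\pm)$) do the work, combined with the fact that $\varphi^\pm_{s,t}$ depends on $\varepsilon^\pm$ and $W$ only, so that conditioning on the larger field $\mathcal F^{U^+,U^-,W}_{s,t}$ integrates out precisely the $\varepsilon^\pm$'s. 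A minor technical point to handle with care is that the number of blocks $k$ with $\rho^k_s\le t$ is itself random and $\mathcal F^{U^+,U^-,W}_{s,t}$-measurable (indeed $\mathcal F^W_{s,t}$-measurable), so one can condition on $\{\rho^k_s\le t<\rho^{k+1}_s\}$ freely; I would also invoke the measurability of $(s,t,z,\omega)\mapsto(\varphi_{s,t}(z),K^{m^+,m^-}_{s,t}(z))$ established in Section \ref{ss} to make all conditional expectations well-defined. Once the single-block identity and the factorization are in place, summing over $k$ against the indicator $1_{\{\rho^k_s\le t<\rho^{k+1}_s\}}$ gives the result.
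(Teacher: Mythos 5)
Your overall plan—reduce to the single block $[s,\rho_s]$ and then propagate through the composition—is in the same spirit as the paper's, and the single-block case is handled essentially as the paper does (via properties (ii)--(iii) of Section~\ref{klp} and Theorem~\ref{mama}~(iii)). But for the propagation step your proposal takes a genuinely different route, decomposing along the random stopping times $(\rho^k_s)_k$, whereas the paper decomposes along a \emph{deterministic} dyadic-type partition $t^n_i = s + i(t-s)/n$ and then localizes on the event $A_n=\cap_i\{t^n_i\le\rho_{t^n_{i-1}}\}$, finally showing $\mathbb{P}(A_n^c)\to 0$. This is not a cosmetic difference: the independence of the flow increments $(\varepsilon^\pm_{u,v},U^\pm_{u,v})$ over disjoint intervals, which is what makes the conditional expectation factor through the composition, is a property of the coupled flows over \emph{deterministic} disjoint intervals (this is exactly the structure encoded in properties (ii)--(v) and in the stochastic-flow axioms). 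By using the deterministic grid, the paper can invoke this independence directly (the $\sigma$-fields $\mathcal F^{\varepsilon^+,\varepsilon^-,U^+,U^-,W}_{t^n_{i-1},t^n_i}$ are independent by construction), at the modest cost of an estimate $\mathbb{P}(A_n^c)\to 0$.

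Your version instead needs the factorization of $E\left[\,\cdot\,\middle|\,\mathcal F^{U^+,U^-,W}_{s,t}\right]$ across the random partition $s<\rho_s<\rho^2_s<\dots$, and that is the real crux. You flag it yourself as ``the main obstacle'' but then dispose of it by appeal to ``properties (ii)--(v)''; these properties, as stated, concern finitely many \emph{fixed} time pairs and say nothing about conditional independence of the $\varepsilon^\pm$'s over $W$-measurable random intervals. Making this rigorous would require a genuine strong-Markov argument for the conditional law of $(\varepsilon^+,\varepsilon^-)$ given $(U^+,U^-,W)$ at the stopping times $\rho^k_s$, which is extra work the paper deliberately avoids. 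So as written there is a gap at the factorization step; the paper's deterministic partition is precisely the device that closes it cheaply (the only price being the tail estimate $n\,\mathbb P\left(\tfrac{t-s}{n}>\rho_0\right)\to 0$, proved via the explicit density of $T_l$).
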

\begin{proof}
Fix $s\leq t, z\in\mathscr C$ and $f\in C(\mathscr C)$. Properties (ii) and (iii) of Section \ref{klp} imply that a.s.
$$K^{m^+,m^-}_{s,t}f(z)1_{\{s\leq t\leq\rho_s\}}=E\left[f(\varphi_{s,t}(z))\left| \mathcal F^{U^+,U^-,W}_{s,t}\right.\right]1_{\{s\leq t\leq\rho_s\}}\ .$$
Define $$\mathcal F^{\varepsilon^+,\varepsilon^-,U^+,U^-,W}_{s,t}=\sigma(\varepsilon^{\pm}_{u,v}, U^{\pm}_{u,v}, W_{u,v}; \;s\leq u\leq v\leq t)=\sigma(\varphi^{\pm}_{u,v}, K^{\pm}_{u,v}; \;s\leq u\leq v\leq t).$$
If $Z$ is a random variable independent of $\mathcal F^{\varepsilon^+,\varepsilon^-,U^+,U^-,W}_{s,t}$, then a.s.
\begin{equation}\label{kk}
K^{m^+,m^-}_{s,t}f(Z)1_{\{s\leq t\leq\rho_s\}}=E\left[f(\varphi_{s,t}(Z))\left| \mathcal F^{U^+,U^-,W}_{s,t}\right.\right]1_{\{s\leq t\leq\rho_s\}}\ .
\end{equation}
For $n\geq 1$ and $i\in[0,n]$, let $t^n_i=s+\frac{(t-s)i}{n}, A_{n,i}=\{t^n_i\leq\rho_{t^n_{i-1}}\}$ and for $n\geq 1$ let $A_n=\cap_{i=1}^{n}A_{n,i}$. Note that $A_{n,i}\in \mathcal F^{W}_{t^n_{i-1},t^n_i}$ and $A_n\in\mathcal F^{W}_{s,t}$. Then since $K^{\pm}$ and $\varphi^{\pm}$ are stochastic flows, $\mathcal F^{\varepsilon^+,\varepsilon^-,U^+,U^-,W}_{s,t}=\bigvee_{i=1}^{n} \mathcal \mathcal F^{\varepsilon^+,\varepsilon^-,U^+,U^-,W}_{t^n_{i-1},t^n_i}$. By Corollary \ref{g}, a.s.
$$K^{m^+,m^-}_{s,t}(z)=K^{m^+,m^-}_{s,t^n_1}\cdots K^{m^+,m^-}_{t^n_{n-1},t}(z)$$
and
$$\varphi_{s,t}(z)=\varphi_{t^n_{n-1},t}\circ\cdots\circ \varphi_{{s,t^n_1}}(z).$$
Recall that the $\sigma$-fields $\left(\mathcal F^{\varepsilon^+,\varepsilon^-,U^+,U^-,W}_{t^n_{i-1},t^n_i}\right)_{1\leq i\leq n}$ are independent. Then, using (\ref{kk}), we get that a.s.
$$K^{m^+,m^-}_{s,t}f(z)1_{A_n}=E\left[f(\varphi_{s,t}(z))\left| \mathcal F^{U^+,U^-,W}_{s,t}\right.\right]1_{A_n},$$
and therefore a.s.
$$K^{m^+,m^-}_{s,t}f(z)=E\left[f(\varphi_{s,t}(z))\left| \mathcal F^{U^+,U^-,W}_{s,t}\right.\right]1_{A_n}+\left(K^{m^+,m^-}_{s,t}f(z)-E\left[f(\varphi_{s,t}(z))\left| \mathcal F^{U^+,U^-,W}_{s,t}\right.\right]\right)1_{A_n^c}.$$
To finish the proof, it remains to prove that $\mathbb P(A_n^c)\rightarrow0$ as $n\rightarrow\infty$. Write
$$\mathbb P(A_n^c)\leq\sum_{i=1}^{n}\mathbb P(A_{n,i}^c)=\sum_{i=1}^{n}\mathbb P(t^n_i-t^n_{i-1}>\rho_{t^n_{i-1}}-t^n_{i-1})=n\mathbb P\left(\frac{t-s}{n}>\rho_0\right).$$
Let $\rho^{\pm}=\inf\{r\geq0: W_{0,r}^{\pm}=l\}$. Then $$\mathbb P(A_n^c)\leq n\left(\mathbb P\left(\frac{t-s}{n}>\rho^+\right)+\mathbb P\left(\frac{t-s}{n}>\rho^-\right)\right)=2n\mathbb P\left(\frac{t-s}{n}>\rho^+\right).$$
We have $\rho^+\overset{law}{=}\inf\{r\geq0: |W_r|=l\}$. Let $T_l=\inf\{r\geq0: W_r=l\}$, then
$$\mathbb P(A_n^c)\leq 4n\mathbb P\left(\frac{t-s}{n}>T_l\right)=4n \int_{\frac{t-s}{n}}^{+\infty} \frac{l}{\sqrt{2\pi x^3}}\exp(\frac{-l^2}{2x})dx$$
(see \cite{MR1725357} page 80). By the change of variable $v=nx$, the right hand side converges to $0$ as $n\rightarrow\infty$ which finishes the proof.
\end{proof}
\subsection{The $L^2$ continuity.\newline}
To conclude that $K^{m^+,m^-}$ and $\varphi$ are two stochastic flows, it remains to prove the following
\begin{prop}\label{mm}
For all $t\geq0$, $\theta\in[0,2\pi[$ and $f\in C(\mathscr C)$, we have
$$\lim_{z\rightarrow e^{i\theta}}E\left[\left(f(\varphi_{0,t}(z))-f(\varphi_{0,t}( e^{i\theta}))\right)^2\right]=\lim_{z\rightarrow e^{i\theta}}E\left[\left(K_{0,t}^{m^+,m^-}f(z)-K_{0,t}^{m^+,m^-}f(e^{i\theta})\right)^2\right]=0.$$
\end{prop}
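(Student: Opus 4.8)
The plan is to reduce the continuity statement to a statement about the driving Brownian motion, using the explicit description of $\varphi_{0,t}$ and $K^{m^+,m^-}_{0,t}$ between consecutive times $\rho^k_0$. Fix $\theta\in[0,2\pi[$, $t\geq 0$ and $f\in C(\mathscr C)$. Since $f$ is uniformly continuous on the compact set $\mathscr C$, it suffices to show that $\varphi_{0,t}(z)\to\varphi_{0,t}(e^{i\theta})$ in probability as $z\to e^{i\theta}$, and then to pass to $L^2$ by bounded convergence (using $\|f\|_\infty<\infty$); the same argument applied to $K^{m^+,m^-}_{0,t}(z)$ together with Proposition \ref{oo} (so that $K^{m^+,m^-}_{0,t}f(z)=E[f(\varphi_{0,t}(z))\mid\mathcal F^{U^+,U^-,W}_{0,t}]$, hence by Jensen $E[(K^{m^+,m^-}_{0,t}f(z)-K^{m^+,m^-}_{0,t}f(e^{i\theta}))^2]\leq E[(f(\varphi_{0,t}(z))-f(\varphi_{0,t}(e^{i\theta})))^2]$) will give the second limit for free. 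So the whole proposition comes down to: $\varphi_{0,t}(z)\to\varphi_{0,t}(e^{i\theta})$ in probability.

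I would first treat $z$ in a punctured neighbourhood of $e^{i\theta}$ with $e^{i\theta}\notin\{1,e^{il}\}$ as the generic case, and the two vertices $\theta\in\{0,l\}$ separately. For $e^{i\theta}\notin\{1,e^{il}\}$: write $n$ for the (a.s. finite) random index with $\rho^n_0\leq t<\rho^{n+1}_0$ and recall $\varphi_{0,t}=\varphi_{\rho^n_0,t}\circ\varphi_{\rho^{n-1}_0,\rho^n_0}\circ\cdots\circ\varphi_{0,\rho^1_0}$. Within the first sub-interval $[0,\rho^1_0]$, $\varphi_{0,\cdot}$ moves $z$ rigidly (by $e^{i\epsilon(z)W_{0,\cdot}}$) until the hitting time $\tau_0(z)$ of a vertex, after which it coincides with $\varphi_{0,\cdot}(1)$ or $\varphi_{0,\cdot}(e^{il})$. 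The key pathwise observation is that for $z$ close enough to $e^{i\theta}$ (so that $\epsilon(z)=\epsilon(e^{i\theta})$ and both sit strictly inside the same edge), one has $\tau_0(z)\to\tau_0(e^{i\theta})$ and $\tau_0(z)\wedge\rho^1_0=\tau_0(e^{i\theta})\wedge\rho^1_0$ eventually on the event that the common path does not graze a vertex exactly at $\rho^1_0$ — an event of probability $1$ for fixed $t$. Concretely: if $t<\tau_0(e^{i\theta})\wedge\rho^1_0$ then for $z$ near $e^{i\theta}$ also $t<\tau_0(z)$ and $\varphi_{0,t}(z)=ze^{i\epsilon(z)W_{0,t}}\to e^{i\theta}e^{i\epsilon(e^{i\theta})W_{0,t}}=\varphi_{0,t}(e^{i\theta})$; if instead $\tau_0(e^{i\theta})\leq t$ then a.s. the path crosses the vertex transversally, hence $\tau_0(z)\to\tau_0(e^{i\theta})$ and $\varphi_{0,\tau_0(z)}(z)$ reaches the same vertex as $\varphi_{0,\tau_0(e^{i\theta})}(e^{i\theta})$ for $z$ close enough, so after time $\tau_0(z)$ both flows follow the identical trajectory $\varphi_{0,\cdot}(1)$ (or $\varphi_{0,\cdot}(e^{il})$) and therefore agree at time $t$ (in the limit). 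Iterating this across the finitely many sub-intervals $[\rho^k_0,\rho^{k+1}_0]$, $k=0,\dots,n$, and using that each $\varphi_{\rho^k_0,\rho^{k+1}_0}$ is either a rigid rotation up to a vertex or already collapsed to a vertex, one gets $\varphi_{0,t}(z)\to\varphi_{0,t}(e^{i\theta})$ a.s., a fortiori in probability. For $\theta\in\{0,l\}$, say $\theta=0$: here $\varphi_{0,t}(1)=e^{i\varphi^+_{0,t}(0)}$ is a point the flow sits at directly; for $z$ near $1$ with $z\neq 1$, $z$ starts on one of the two edges and its hitting time $\tau_0(z)$ of $\{1,e^{il}\}$ satisfies $\tau_0(z)\to 0$ a.s. as $z\to 1$ (since the Brownian motion started at $0$ immediately oscillates and $\arg z\to 0$ or $2\pi$), and moreover $\varphi_{0,\tau_0(z)}(z)\to 1$; from then on $\varphi_{0,\cdot}(z)$ coincides with $\varphi_{0,\cdot}(1)$ (or with $\varphi_{0,\cdot}(e^{il})$, but for $z$ on the correct side the hit is at $1$, and the probability that $z$ is on the "wrong" side and the excursion carries it all the way to $e^{il}$ before returning tends to $0$). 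So again $\varphi_{0,t}(z)\to\varphi_{0,t}(1)$ in probability, and the bounded-convergence step finishes the argument.

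The main obstacle, and the place where the argument requires genuine care rather than bookkeeping, is controlling the behaviour at the two vertices $1$ and $e^{il}$, and more generally ruling out the null events on which a continuity-breaking coincidence occurs: (a) the event that the common Brownian path $W$ hits a level corresponding to a vertex exactly at one of the deterministic-in-$t$ endpoints — handled because for fixed $t$ this has probability zero; and (b) for $z$ approaching a vertex $P$ from an edge, the event that a short initial excursion of $W$ drives $z$ across the whole edge to the opposite vertex before $\varphi$ "catches" $P$. For (b) one estimates, as in the proof of Proposition \ref{oo}, the probability that $|W|$ (or $W^{\pm}$) reaches a level of order $\mathrm{dist}(z,P)$ only after first reaching level $l$; by the strong Markov property and the explicit hitting-time density of Brownian motion this probability is $O(\mathrm{dist}(z,P))\to 0$, which is exactly the quantitative input needed. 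Once these two null/small events are dispatched, the uniform continuity of $f$ on $\mathscr C$ and Jensen's inequality (via Proposition \ref{oo}) convert the in-probability convergence of $\varphi_{0,t}(z)$ into the claimed $L^2$-convergence for both $\varphi$ and $K^{m^+,m^-}$.
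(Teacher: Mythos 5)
Your plan is essentially the paper's own proof: Jensen's inequality with Proposition \ref{oo} reduces the statement to $\varphi$; uniform continuity of $f$ plus bounded convergence reduces it to convergence in probability of $\varphi_{0,t}(z)$ to $\varphi_{0,t}(e^{i\theta})$ (the paper invokes Lemma 1.11 of \cite{MR2060298} to the same effect); and the split between $\theta\in\{0,l\}$ and $\theta$ interior to an edge, using parallel motion up to the vertex hitting time, coalescence at the common vertex, and the vanishing probability of reaching the ``wrong'' vertex, is precisely the paper's argument. One small correction: your interim claim that $\tau_0(z)\wedge\rho^1_0=\tau_0(e^{i\theta})\wedge\rho^1_0$ eventually is false for $z\neq e^{i\theta}$ (these are hitting times of distinct two-point sets); what holds, and what your subsequent sentences correctly rely on, is that $\tau_0(z)\to\tau_0(e^{i\theta})$ a.s.\ with the same exit vertex for $z$ close enough, so the two trajectories merge before time $t$ whenever $t>\tau_0(e^{i\theta})$.
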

\begin{proof}
By Jensen's inequality and Proposition \ref{oo}, it suffices to prove the result only for $\varphi$ and by the proof of Lemma 1.11 \cite{MR2060298} (see also Lemma 1 \cite{MR2835247}), this amounts to show that 
\begin{equation}\label{HR}
\lim_{z\rightarrow e^{i\theta}}\mathbb P\left(d(\varphi_{0,t}(z),\varphi_{0,t}(e^{i\theta}))>\eta\right)=0.
\end{equation}
where $t>0, \eta>0$ and $\theta\in[0,2\pi[$ are fixed from now on. For each $z\in\mathscr C$, let $A_{z}=\{d(\varphi_{0,t}(z),\varphi_{0,t}(e^{i\theta}))>\eta\}$ and denote $\tau_0(z)$ and $\varphi_{0,t}$ simply by $\tau(z)$ and $\varphi_{t}$. \\
\underline{First case} : $\theta=0$. For $\alpha\in]0,l[$, we have $\tau(e^{i\alpha})=\inf\{t\geq 0 : \alpha+W_t=0\ \textrm{or}\ l\}$ and
$$\mathbb P(A_{e^{i\alpha}})\leq\mathbb P(t<\tau(e^{i\alpha}))+\mathbb P\left(A_{e^{i\alpha}}\cap\{\varphi_{\tau(e^{i\alpha})}(e^{i\alpha})=1, t\geq\tau(e^{i\alpha})\}\right)+\mathbb P(\varphi_{\tau(e^{i\alpha})}(e^{i\alpha})=e^{il}).$$
If $t\geq\tau(e^{i\alpha})$ and $\varphi_{\tau(e^{i\alpha})}(e^{i\alpha})=1$, then $\varphi_{t}(e^{i\alpha})=\varphi_{t}(1)$, thus in the right-hand side, the second term equals $0$. Since $\lim_{\alpha\rightarrow 0+}\tau(e^{i\alpha})=0$ and $\mathbb P(\varphi_{\tau(e^{i\alpha})}(e^{i\alpha})=e^{il})=\mathbb P(\alpha+W_{\tau(e^{i\alpha})}=l)$, it is clear that  $\mathbb P(A_{e^{i\alpha}})\rightarrow0$ as $\alpha\rightarrow0+$ and similarly $\mathbb P(A_{e^{i\alpha}})\rightarrow0$ as $\alpha\rightarrow(2\pi)-$. Thus (\ref{HR}) holds for $\theta=0$ and by the same way for $\theta=l$.\\
\noindent \underline{Second case} : $\theta\in]l,2\pi[$. For all $\alpha\in ]l,2\pi[$, we have
\begin{eqnarray}
\mathbb P(A_{e^{i\alpha}})&\leq&\mathbb P\big(A_{e^{i\alpha}}\cap\{\varphi_{\tau(e^{i\alpha})}(e^{i\alpha})=\varphi_{\tau(e^{i\theta})}(e^{i\theta})=1\}\big)\nonumber\\
&+&\mathbb P\big( A_{e^{i\alpha}}\cap\{\varphi_{\tau(e^{i\alpha})}(e^{i\alpha})=\varphi_{\tau(e^{i\theta})}(e^{i\theta})=e^{il}\}\big)+\epsilon_{\alpha,\theta}\nonumber\
\end{eqnarray}
where $$\epsilon_{\alpha,\theta}=\mathbb P\big( \varphi_{\tau(e^{i\alpha})}(e^{i\alpha})=1, \varphi_{\tau(e^{i\theta})}(e^{i\theta})=e^{il}\big) +\mathbb P\big( \varphi_{\tau(e^{i\alpha})}(e^{i\alpha})=e^{il}, \varphi_{\tau(e^{i\theta})}(e^{i\theta})=1\big)$$
which converges to $0$ as $\alpha\rightarrow\theta$. Let us prove that $$\lim_{\alpha\rightarrow \theta}\mathbb P(B_{\alpha})=0\ \textrm{where}\ \ B_{\alpha}=A_{e^{i\alpha}}\cap\{\varphi_{\tau(e^{i\alpha})}(e^{i\alpha})=\varphi_{\tau(e^{i\theta})}(e^{i\theta})=1\}.$$
For $l<\alpha<\theta$, write
$$\mathbb P(B_{\alpha})=\mathbb P(B_{\alpha}\cap\{t\leq\tau(e^{i\theta})\})+\mathbb P(B_{\alpha}\cap\{\tau(e^{i\theta})<t<\tau(e^{i\alpha})\})+\mathbb P(B_{\alpha}\cap{\{t\geq\tau(e^{i\alpha})\}}).$$
Since $\varphi_{\cdot}(e^{i\alpha})$ and $\varphi_{\cdot}(e^{i\theta})$ move parallely until one of them hits $1$ or $e^{il}$, it comes that
$$\lim_{\alpha\rightarrow \theta-}\bigg(\mathbb P(B_{\alpha}\cap\{t\leq\tau(e^{i\theta})\})+\mathbb P(B_{\alpha}\cap\{\tau(e^{i\theta})<t<\tau(e^{i\alpha})\})\bigg)=0.$$
Now
\begin{eqnarray}
\mathbb P(B_{\alpha}\cap{\{t\geq\tau(e^{i\alpha})\}})&=&\mathbb P(B_{\alpha}\cap{\{\tau(e^{i\alpha})\leq t\wedge\rho_{\tau(e^{i\theta})}\}})+\mathbb P(B_{\alpha}\cap{\{\rho_{\tau(e^{i\theta})}<\tau(e^{i\alpha})\leq t \}})\nonumber\\
&\leq& \mathbb P(B_{\alpha}\cap{\{\tau(e^{i\alpha})\leq t\wedge\rho_{\tau(e^{i\theta})}\}})+\mathbb P(\rho_{\tau(e^{i\theta})}<\tau(e^{i\alpha})).\nonumber\
\end{eqnarray}
Obviously $\lim_{\alpha\rightarrow \theta} \mathbb P(\rho_{\tau(e^{i\theta})}<\tau(e^{i\alpha}))=\mathbb P(\rho_{\tau(e^{i\theta})}<\tau(e^{i\theta}))=0$. Set $Y=\varphi_{\tau(e^{i\theta})}(e^{i\alpha})$, then a.s. on $B_{\alpha}\cap{\{\tau(e^{i\alpha})\leq t\wedge\rho_{\tau(e^{i\theta})}\}}$, we have $\varphi_{t}(e^{i\alpha})=\varphi_{\tau(e^{i\theta}),t}(Y)$ by Corollary \ref{g} and $\tau_{\tau(e^{i\theta})}(Y)=\tau(e^{i\alpha})\leq \rho_{\tau(e^{i\theta})}$. Recall that $\varphi_{\tau(e^{i\theta}),s}(Y):=\varphi_{\tau(e^{i\theta}),s}(1)$ for all $s\in[\tau_{\tau(e^{i\theta})}(Y),\rho_{\tau(e^{i\theta})}]$ and consequently $\varphi_{\tau(e^{i\theta}),s}(Y)=\varphi_{\tau(e^{i\theta}),s}(1)$ for all $s\geq\tau_{\tau(e^{i\theta})}(Y)$ (by the definition of $\varphi$). This shows that a.s. on $B_{\alpha}\cap{\{\tau(e^{i\alpha})\leq t\wedge\rho_{\tau(e^{i\theta})}\}}$, we have $$d(\varphi_{t}(e^{i\theta}),\varphi_{t}(e^{i\alpha}))=d(\varphi_{\tau(e^{i\theta}),t}(1),\varphi_{\tau(e^{i\theta}),t}(1))=0.$$
Finally $\lim_{\alpha\rightarrow \theta-}\mathbb P(B_{\alpha})=0$ and by interchanging the roles of $\theta$ and $\alpha$, we have $\lim_{\alpha\rightarrow \theta+}\mathbb P(B_{\alpha})=0$. Similarly $$\lim_{\theta\rightarrow \theta}\mathbb P\left(A_{e^{i\alpha}}\cap\{\varphi_{\tau(e^{i\theta})}(e^{i\theta})=\varphi_{\tau(e^{i\alpha})}(e^{i\alpha})=e^{il}\}\right)=0$$
so that (\ref{HR}) is satisfied for all $\theta\in]l,2\pi[$. By the same way, it is also satisfied for all $\theta\in]0,l[$.                                                                             
\end{proof}
\subsection{The flows $\varphi$ and $K^{m^+,m^-}$ solve $(T_{\mathscr C})$.\newline}
In this paragraph we prove the following
\begin{prop}
Both $\varphi$ and $K^{m^+,m^-}$ solve $(T_{\mathscr C})$.
\end{prop}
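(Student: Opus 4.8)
The plan is to establish identity \eqref{jhk} first for $\varphi$, where it reduces to a pathwise application of It\^o's formula, and then to transfer it to $K^{m^+,m^-}$ by conditioning, exploiting that $K^{m^+,m^-}$ is obtained from $\varphi$ by filtering (Proposition \ref{oo}).

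First I would fix $s\le t$, $z\in\mathscr C$ and $f\in C^2(\mathscr C)$ and set $g=f\circ\exp$; by the very definition of the derivative on $\mathscr C$ this is a $2\pi$-periodic $C^2$ map on $\R$ with $g'=f'\circ\exp$ and $g''=f''\circ\exp$. Since $K=\delta_\varphi$ turns \eqref{jhk} into the pathwise identity
\[
f(\varphi_{s,t}(z))=f(z)+\int_s^t (\epsilon f')(\varphi_{s,u}(z))\,dW_u+\tfrac12\int_s^t f''(\varphi_{s,u}(z))\,du,
\]
I would prove this first on $[s,\rho_s]$. For $z$ away from the two vertices and $u\le\tau_s(z)$ one has $\varphi_{s,u}(z)=z\,e^{i\epsilon(z)W_{s,u}}$ with $\epsilon(\varphi_{s,u}(z))=\epsilon(z)$, so It\^o applied to $g(\arg(z)+\epsilon(z)W_{s,u})$ gives the identity directly. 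For $z=1$ (resp. $z=e^{il}$) one uses that $(\varphi^+,W)$ (resp. $(\varphi^-,-W)$) solves Tanaka's equation \eqref{cc} (Theorem \ref{mama}), so that $Y_u=\varphi^\pm_{s,u}(0)$ is a Brownian motion with $dY_u=\pm\mathrm{sgn}(Y_u)\,dW_u$ and $d\langle Y\rangle_u=du$; It\^o applied to $g(Y_u)=f(\varphi_{s,u}(1))$, resp. $g(l+Y_u)=f(\varphi_{s,u}(e^{il}))$, yields the identity but with $\pm\mathrm{sgn}(\varphi^\pm_{s,u}(0))$ in place of $\epsilon(\varphi_{s,u}(1))$, resp. $\epsilon(\varphi_{s,u}(e^{il}))$. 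The key elementary point is that these coincide for a.e.\ $u<\rho_s$: there $W^\pm_{s,u}<l\le\pi$, so $\varphi_{s,u}(1)=\exp(i\varepsilon^+_{s,u}W^+_{s,u})$ has argument in $[0,l]$ precisely when $\varepsilon^+_{s,u}=1$, giving $\epsilon(\varphi_{s,u}(1))=\varepsilon^+_{s,u}=\mathrm{sgn}(\varphi^+_{s,u}(0))$, and symmetrically at $e^{il}$ one gets $\epsilon(\varphi_{s,u}(e^{il}))=-\mathrm{sgn}(\varphi^-_{s,u}(0))$. Finally, on $\{\tau_s(z)<\rho_s\}$ the point reaches a vertex with $W^\pm_{s,\tau_s(z)}=0$, so $\varphi_{s,\cdot}(z)$ agrees past $\tau_s(z)$ with $\varphi_{s,\cdot}(1)$ or $\varphi_{s,\cdot}(e^{il})$ and starts there from that vertex; adding the identity for $z$ on $[s,\tau_s(z)]$ to the identity for the vertex on $[\tau_s(z),\rho_s]$ closes the case. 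To pass from $[s,\rho_s]$ to arbitrary $t$ I would iterate over the stopping times $\rho^k_s$ ($\rho^0_s=s$, $\rho^{k+1}_s=\rho_{\rho^k_s}$), which tend to $+\infty$ a.s.\ since their increments are i.i.d.\ and positive, using the flow property $\varphi_{s,u}(z)=\varphi_{\rho^k_s,u}(\varphi_{s,\rho^k_s}(z))$ (Corollary \ref{g}), Step 1 applied on each $[\rho^k_s,\rho^{k+1}_s]$, and additivity of the two integrals over the subintervals. This proves $(\varphi,W)$ solves $(T_{\mathscr C})$.

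To obtain the statement for $K^{m^+,m^-}$, I would take $E[\,\cdot\mid\mathcal F^{U^+,U^-,W}_{s,t}]$ in the pathwise identity for $\varphi$. By the independence properties (i)--(v) of Section \ref{klp}, the increments of $W$ after time $u$ are independent of $\mathcal F^{U^+,U^-,W}_{s,u}$, so $W$ is still a Brownian motion in the filtration $(\mathcal F^{U^+,U^-,W}_{s,u})_{u\ge s}$, which is a sub-filtration of the one to which the integrand $(\epsilon f')(\varphi_{s,\cdot}(z))$ is adapted. The standard conditional It\^o lemma then lets $E[\,\cdot\mid\mathcal F^{U^+,U^-,W}_{s,t}]$ enter the stochastic integral and be replaced inside it by $E[\,\cdot\mid\mathcal F^{U^+,U^-,W}_{s,u}]$; the same conditional independence lets it enter the $du$-integral and be reduced to time $u$ there as well. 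Proposition \ref{oo} identifies each resulting conditional expectation with $K^{m^+,m^-}_{s,\cdot}(\,\cdot\,)(z)$, which is exactly \eqref{jhk} for $K^{m^+,m^-}$. (Alternatively, one can run the same conditioning argument starting directly from the equations \eqref{dd} satisfied by $(K^\pm,\pm W)$.)

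I expect the only genuinely delicate step to be this last one: making rigorous that conditioning commutes with the stochastic integral, which rests on checking that $W$ stays a Brownian motion relative to $(\mathcal F^{U^+,U^-,W}_{s,u})_u$ and that, given this filtration up to time $u$, the signs $\varepsilon^\pm$ on subintervals of $[s,u]$ are independent of the future of $(W,U^+,U^-)$ — both facts coming straight from the precise (conditional) independence built into (i)--(v). The sign bookkeeping in the It\^o step and the iteration over the $\rho^k_s$ are routine once the explicit construction and the flow property are available.
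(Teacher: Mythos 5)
Your Step~1 for $\varphi$ is correct and is a mildly cleaner variant of the paper's: you apply It\^o directly to $g(Y_u)$ with $g=f\circ\exp$ and $Y=\varphi^\pm_{s,\cdot}(0)$ satisfying $dY_u=\pm\mathrm{sgn}(Y_u)\,dW_u$, then match the sign $\pm\mathrm{sgn}(Y_u)$ with $\epsilon(\varphi_{s,u}(\cdot))$, whereas the paper goes through Tanaka's formula for $|\varphi^+_{S,S+\cdot}|$ and Doob--Meyer uniqueness to extract $\varphi^+_{S,S+t}=\int_0^t\varepsilon^+_{S,S+u}\,dW_{S,S+u}$. Both routes work and the sign bookkeeping matches.

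The genuine gap is in the iteration from $[s,\rho_s]$ to $[0,+\infty)$, which you compress into ``Step~1 applied on each $[\rho^k_s,\rho^{k+1}_s]$ and additivity.'' There are two obstructions you do not address, and the paper's proof is mostly devoted to them. First, Step~1 on $[\rho^k_s,\rho^{k+1}_s]$ is Step~1 with $s$ replaced by the \emph{stopping time} $\rho^k_s$; the paper therefore formulates and proves its first step for a finite $(\mathcal F^W_{0,\cdot})$-stopping time $S$, and this hinges on first showing that $\varphi^\pm_{S,S+\cdot}$ is a Brownian motion for such $S$ --- which is exactly what the measurable modifications of Section~\ref{chayynt} (and a discretization $S_q=\lfloor qS\rfloor/q+1/q$ of $S$) are for. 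Second, even granting this, you must substitute the \emph{random}, $\mathcal G_{\rho^k_s}$-measurable initial point $\varphi_{s,\rho^k_s}(z)$ into an identity of the form $H_{(f,t)}(y)=0$ a.s.\ for all fixed $y$, where $H_{(f,t)}$ contains a stochastic integral. Because the It\^o integral is not defined pathwise and the null set in ``$H_{(f,t)}(y)=0$ a.s.'' depends on $y$, this substitution is not ``additivity''; it requires the paper's Second Step, namely that $\sigma(\varphi_{S,(S+u)\wedge\rho_S}(z),\ z\in\mathscr C,\ u\ge0)$ is independent of $\mathcal G_S=\sigma(\varphi_{0,u}(z),\ z\in\mathscr C,\ 0\le u\le S)$, together with a jointly measurable version of $y\mapsto H_{(f,t)}(y)$. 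Your proposal neither states nor proves either of these facts, so as written the inductive passage to arbitrary $t$ does not go through.

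The transfer to $K^{m^+,m^-}$ by conditioning through the stochastic integral, using that $W$ remains a Brownian motion in $(\mathcal F^{U^+,U^-,W}_{s,u})_u$, is the right idea and is essentially how the paper proceeds (by reference to Proposition~4.1(ii) of \cite{MR2235172} together with Proposition~\ref{oo}); you correctly flag that step as the one needing care.
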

\begin{proof}
First we check the result for $\varphi$. We will denote $\varphi^{\pm}_{s,t}(0)$ simply by $\varphi^{\pm}_{s,t}$ and the mapping $z\mapsto\varphi^{\pm}_{s,t}(z)$ by $(\varphi^{\pm}_{s,t}(z))_{z\in\mathscr C}$ to avoid any confusion. An important consequence of the modifications defined in Section \ref{chayynt} which is the key argument here is that $\varphi^{\pm}_{S,S+\cdot}$ is a Brownian motion for any finite $(\mathcal F^W_{0,\cdot})$-stopping time $S$. To justify this, consider a finite $(\mathcal F^W_{0,\cdot})$-stopping time $S$ and for $q\geq 1$ and $t>0$, set
$$S_q=\frac{\lfloor qS\rfloor+1}{q},\ S_{q,t}=S_q-\frac{2}{q}+t.$$
Let $t>0$, then a.s. $(S,S+t)\in\mathscr D^+$ and for $q$ large enough, we have $(S_q,S_{q,t})\in\mathscr D^+$ and $\textrm{m}^+_{S_q,S_{q,t}}=\textrm{m}^+_{S,S+{t}}$. Lemma \ref{aa} (ii) implies that a.s. for $q$ large enough $\varepsilon^+_{S,S+t}=\varepsilon^+_{S_q,S_{q,t}}$. Thus a.s.
\begin{equation}\label{lop}
\varphi^+_{S,S+t}=\lim_{q\rightarrow\infty}\varphi^+_{S_q,S_{q,t}}.
\end{equation}
Let $0<t_1<\cdots<t_n$ and take a family $(f_i)_{1\leq i\leq n}$ of bounded continuous functions from $\R$ into $\R$. Using the independence of increments and the stationarity of $\varphi^+$, we have
\begin{eqnarray}
E\left[\prod_{i=1}^{n}f_i(\varphi^+_{S,S+t_i})\right]&=&\lim_{q\rightarrow\infty}E\left[\prod_{i=1}^{n}f_i(\varphi^+_{S_q,S_{q,t_i}})\right]\nonumber\\
&=&\lim_{q\rightarrow\infty}\sum_{h\in\N}E\left[\prod_{i=1}^{n}f_i(\varphi^+_{\frac{h+1}{q},\frac{h-1}{q}+t_i})1_{\{\frac{h}{q}\leq S<\frac{h+1}{q}\}}\right]\nonumber\\
&=&\lim_{q\rightarrow\infty}\sum_{h\in\N}E\left[\prod_{i=1}^{n}f_i(\varphi^+_{\frac{h+1}{q},\frac{h-1}{q}+t_i})\right]\mathbb P\left(\frac{h}{q}\leq S<\frac{h+1}{q}\right)\nonumber\\
&=&\lim_{q\rightarrow\infty}\sum_{h\in\N}E\left[\prod_{i=1}^{n}f_i(\varphi^+_{0,t_i-\frac{2}{q}})\right]\mathbb P\left(\frac{h}{q}\leq S<\frac{h+1}{q}\right)\nonumber\\
&=&\lim_{q\rightarrow\infty}E\left[\prod_{i=1}^{n}f_i(\varphi^+_{0,t_i-\frac{2}{q}})\right]\nonumber\\
&=&E\left[\prod_{i=1}^{n}f_i(\varphi^+_{0,t_i})\right].\nonumber\
\end{eqnarray}
Since $\varphi^+_{0,\cdot}$ is a Brownian motion, the same holds for $\varphi^+_{S,S+\cdot}$. Now the rest  of the proof will be divided into three steps.\\
\textbf{First step.} Let $S$ be a finite $(\mathcal F^W_{0,\cdot})$-stopping time. Then for all $z\in\mathscr C, f\in C^2(\mathscr C)$, a.s. $\forall t\in[0,\rho_S-S]$,
$$f(\varphi_{S,S+t}(z))=f(z)+\int_0^t (f'\epsilon)(\varphi_{S,S+u}(z))dW_{S,S+u}+ \frac{1}{2}\int_0^tf''(\varphi_{S,S+u}(z))du.$$
We first prove this for $z=1$. By It\^o's formula, for all $f\in C^2(\mathscr C)$ a.s. $\forall t\geq0$,
$$f(\exp(i\varphi^+_{S,S+t}))=f(1)+\int_0^t f'(\exp(i\varphi^+_{S,S+u}))d\varphi^+_{S,S+u} + \frac{1}{2}\int_0^tf''(\exp(i\varphi^+_{S,S+u}))du.$$
Tanaka's formula for local time yields  a.s. $\forall t\in[0,\rho_S-S]$,
$$|\varphi^+_{S,S+t}|=\int_{0}^{t}\textrm{sgn}(\varphi^+_{S,S+u})d\varphi^+_{S,S+u}+L_t$$
where $L_t$ is the local time in $0$ of $\varphi^{+}_{S,S+\cdot}$. By construction, $|\varphi^+_{S,S+t}|=W^{+}_{S,S+t}$ for all $t$. So we can deduce from the previous line that a.s. $\forall t\in[0,\rho_S-S]$,
$$\int_{0}^{t}\textrm{sgn}(\varphi^+_{S,S+u})d\varphi^+_{S,S+u}+L_t=W^{+}_{S,S+t}.$$
Thus by unicity of the Doob-Meyer decomposition, a.s. $\forall t\in[0,\rho_S-S]$,
$$\int_{0}^{t}\textrm{sgn}(\varphi^+_{S,S+u})d\varphi^+_{S,S+u}=W_{S,S+t}.$$
Since $\textrm{sgn}(\varphi^+_{S,S+u})=\varepsilon^+_{S,S+u}$ a.s., we get a.s. $\forall t\in[0,\rho_S-S]$, $$\varphi^+_{S,S+t}=\int_{0}^{t}\varepsilon^+_{S,S+u}dW_{S,S+u}=\int_{0}^{t}\epsilon(\varphi_{S,S+u}(1))dW_{S,S+u}.$$
Recall that $\varphi_{S,S+t}(1)=e^{i\varphi^+_{S,S+t}}$ for all $t\in[0,\rho_S-S]$, thus the first step holds for $z=1$. The first step is similarly satisfied for $z=e^{il}$ and for all $z\in\mathscr C\setminus\{1,e^{il}\}$ by distinguishing the cases $t\leq \tau_{S}(z)-S$ and $t>\tau_{S}(z)-S$.\\
\textbf{Second step.} Let $S$ be a finite $(\mathcal F^W_{0,\cdot})$-stopping time, $\mathcal G_t=\sigma(\varphi_{0,u}(z), z\in\mathscr C, 0\leq u\leq t)$, $t\geq0$. Then $\sigma(\varphi_{S,(S+u)\wedge \rho_S}(z), z\in\mathscr C, u\geq0)$ is independent of $\mathcal G_{S}$.\\
Clearly
$$\sigma(\varphi_{S,(S+u)\wedge \rho_S}(z), z\in\mathscr C, u\geq0)\subset \sigma(\varphi^+_{S,S+u}, u\geq0)\vee \sigma(\varphi^-_{S,S+u}, u\geq0).$$
Fix $0<u_1<\cdots<u_n$, then a.s. $(S,S+u_1),\cdots,(S,S+u_n)$ are in $\mathscr D^+\cap \mathscr D^-$. Take a family $\{f_1,g_1,\cdots,f_n,g_n\}$ of bounded continuous functions from $\R$ into $\R$ and let $A\in \mathcal G_{S}$. By (\ref{lop}), we have
$$E\bigg[\prod_{i=1}^{n}f_i(\varphi^+_{S,S+u_i})g_i(\varphi^-_{S,S+u_i})1_{A}\bigg]=\lim_{q\rightarrow\infty}E\bigg[\prod_{i=1}^{n}f_i(\varphi^+_{S_q,S_{q,u_i}})g_i(\varphi^-_{S_q,S_{q,u_i}})1_A\bigg].$$
For $q$ large enough $(\frac{2}{q}<u_1)$, we have
$$E\bigg[\displaystyle\prod_{i=1}^{n}f_i(\varphi^+_{S_q,S_{q,u_i}})g_i(\varphi^-_{S_q,S_{q,u_i}})1_A\bigg]$$
$$=\displaystyle\sum_{m\geq0}E\bigg[\prod_{i=1}^{n}f_i\big(\varphi^+_{\frac{m+1}{q},\frac{m-1}{q}+u_i}\big) g_i\big(\varphi^-_{\frac{m+1}{q},\frac{m-1}{q}+u_i}\big)1_{A\cap \{\frac{m}{q}\leq S <\frac{m+1}{q}\}}\bigg]$$
with $A\cap \{\frac{m}{q}\leq S<\frac{m+1}{q}\}\in\mathcal G_{\frac{m+1}{q}}\subset \sigma(\varphi^+_{u,v}(z),\varphi^-_{u,v}(z), z\in\mathscr C, 0\leq u\leq v\leq \frac{m+1}{q})$. Now using the independence of increments and the stationarity of $(\varphi^+,\varphi^-)$, the second step easily holds.\\
\textbf{Third step}. $\varphi$ solves $(T_{\mathscr C})$.\\
Denote $\rho^k_0$ simply by $\rho^k$. Then a.s. for all $k\in\N$ and $z\in\mathscr C$, $u\longmapsto\varphi_{\rho^k,u}(z)$ is continuous on $[\rho^k,\rho^{k+1}]$. Consequently for all $z\in\mathscr C$, a.s. $u\longmapsto\varphi_{0,u}(z)$ is continuous on $[0,+\infty[$ and in particular, $\varphi_{0,\rho^k}(z)$ is $\mathcal G_{\rho^k}$ measurable. Now fix $f\in C^2(\mathscr C), t\geq0, z\in\mathscr C$ and define for all $y\in\mathscr C$,
\begin{eqnarray}
H_{(f,t)}(y)&=&f(\varphi_{\rho^1,\rho^1+t\wedge (\rho^{2}-\rho^1)}(y))-f(y)-\int_0^{t\wedge (\rho^{2}-\rho^1)} (f'\epsilon)(\varphi_{\rho^1,\rho^1+u}(y))dW_{\rho^1,\rho^1+u}\nonumber\\
&-& \frac{1}{2}\int_0^{t\wedge (\rho^{2}-\rho^1)}f''(\varphi_{\rho^1,\rho^1+u}(y))du.\nonumber\
\end{eqnarray}
Then a.s. $y\longmapsto H_{(f,t)}(y)$ is measurable from $\mathscr C$ into $\R$. Moreover $H_{(f,t)}$ is $\sigma(\varphi_{\rho^1,(\rho^1+u)\wedge \rho^{2}}(z), u\geq0, z\in\mathscr C)$-measurable and $H_{(f,t)}(y)=0$ a.s. for all $y\in\mathscr C$ by the first step. The second step yields $H_{(f,t)}(\varphi_{0,\rho^1}(z))=0$ a.s. and we may replace $y$ by $\varphi_{0,\rho^1}(z)$ directly in the stochastic integral so that, using the flow property, we get
\begin{eqnarray}
f\left(\varphi_{0,\rho^1+t\wedge (\rho^{2}-\rho^1)}(z)\right)&=&f(\varphi_{0,\rho^1}(z))+\int_0^{t\wedge (\rho^{2}-\rho^1)} (f'\epsilon)(\varphi_{0,\rho^1+u}(z))dW_{\rho^1,\rho^1+u}\nonumber\\
&+& \frac{1}{2}\int_0^{t\wedge (\rho^{2}-\rho^1)}f''(\varphi_{0,\rho^1+u}(z))du\nonumber\\
&=&f(z)+\displaystyle\int_0^{\rho^1+t\wedge (\rho^{2}-\rho^1)} \bigg((f'\epsilon)(\varphi_{0,u}(z))dW_{u}+ \frac{1}{2}f''(\varphi_{0,u}(z))\bigg)du.\nonumber
\end{eqnarray}
By induction, we have a.s. $\forall k\in\mathbb N,$
\begin{eqnarray}
f(\varphi_{0,\rho^k+t\wedge (\rho^{k+1}-\rho^k)}(z))&=&f(z)+\int_0^{\rho^k+t\wedge (\rho^{k+1}-\rho^k)} (f'\epsilon)(\varphi_{0,u}(z))dW_{u}\nonumber\\
&+& \frac{1}{2}\int_0^{\rho^k+t\wedge (\rho^{k+1}-\rho^k)}f''(\varphi_{0,u}(z))du.\nonumber\
\end{eqnarray}
This implies that $\varphi$ solves $(T_{\mathscr C})$. The fact that $K^{m^+,m^-}$ solves $(T_{\mathscr C})$ is similar to Proposition 4.1 (ii) in \cite{MR2235172} using Proposition \ref{oo}.
\end{proof}
\section{Flows solutions of $(T_{\mathscr C})$}
From now on $(K,W)$ is a solution of $(T_{\mathscr C})$ defined on a probability space $(\Omega,\mathcal A,\mathbb P)$. 
Fix $s\in\R$ and $z\in \mathscr C$, then $(K_{s,t}(z))_{t\geq s}$ can be modified such that, a.s. the mapping $t\longmapsto K_{s,t}(z)$ is continuous from $[s,+\infty[$ into $\mathcal P(\mathscr C)$. It is the version we consider henceforth for all fixed $s$ and $z$.
\begin{lemma}\label{rz} (i) For all $z\in \mathscr C$ and $s\in\R$, denote $\tau_{s}(z)=\inf\{r\geq s,\ ze^{i\epsilon(z)W_{s,r}}=1\ \textrm{or}\ e^{il}\}$. Then a.s. $$K_{s,t}(z)=\delta_{ze^{i\epsilon(z)W_{s,t}}}, \ \textrm{if}\ \ s\leq t\leq \tau_{s}(z).$$
(ii) $\sigma(W)\subset\sigma(K)$.
 \end{lemma}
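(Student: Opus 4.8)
For part (ii) I would argue as in Lemma 3.1 of \cite{MR2235172}: $W$ can be recovered measurably from $K$. Rewriting \eqref{jhk}, the local-martingale part $\int_s^t K_{s,u}(\epsilon f')(x)\,dW_u$ of the $\sigma(K)$-adapted semimartingale $t\mapsto K_{s,t}f(x)$ is itself a functional of $K$; choosing $x\notin\{1,e^{il}\}$ and $f\in C^2(\mathscr C)$ with $f'\equiv1$ on a small arc around $x$ not meeting the vertices and $\textrm{supp}\,f'$ contained in that arc, the integrand equals $\epsilon(x)K_{s,u}(f')(x)$, which is $\epsilon(x)\ne0$ at $u=s$ (where $K_{s,s}(x)=\delta_x$) and, by continuity, stays bounded away from $0$ on a random positive interval; there $W_{s,\cdot}$ is a measurable functional of $K$. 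Patching such intervals along the ($W$-measurable) times $\rho^k_s$ by the flow property, and then letting $s\to-\infty$, gives $\sigma(W)\subset\sigma(K)$; in particular $(W_{s,t})_{t\ge s}$ is a Brownian motion for $\mathcal F^K_{s,t}:=\sigma(K_{s,u};\,s\le u\le t)$, which I use below.

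For part (i) the case $z\in\{1,e^{il}\}$ is vacuous ($\tau_s(z)=s$, $K_{s,s}(z)=\delta_z$), so I fix $z$ in the open arc containing it; by stationarity we may take $s=0$ and, say, $z\in\mathscr C^+$ (hence $\epsilon(z)=1$; the case $z\in\mathscr C^-\setminus\{1,e^{il}\}$ is treated symmetrically). Write $Z_t=ze^{iW_{0,t}}$ and $\tau=\tau_0(z)$, so $Z$ is continuous with $Z_t\in\mathscr C^+$ for $t<\tau$ and $Z_\tau\in\{1,e^{il}\}$. Pick $f\in C^2(\mathscr C)$ with $f(e^{i\theta})=\theta$ for $\theta\in[0,l]$, extended to a $C^2$ function on the complementary arc (possible, since only the boundary values $0,l$ and the first and second derivatives $1$ and $0$ at the two vertices have to be matched); on $\overline{\mathscr C^+}$ one then has $\epsilon\equiv1$, $f'\equiv1$, $f''\equiv0$, and $f$ is injective with $f(w)=\arg w$ there. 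Applying \eqref{jhk} to $f$ and to $f^2$ and It\^o's formula to $(K_{0,t}f(z))^2$, the nonnegative process $V_t:=K_{0,t}(f^2)(z)-(K_{0,t}f(z))^2$ is a semimartingale whose finite-variation part has density
$$K_{0,t}((f')^2)(z)-\big(K_{0,t}(\epsilon f')(z)\big)^2+K_{0,t}(ff'')(z)-K_{0,t}f(z)\,K_{0,t}f''(z).$$
Granting the confinement $\textrm{supp}\,K_{0,t}(z)\subset\overline{\mathscr C^+}$ for $t\le\tau$, this density equals $1-1+0-0=0$ there, so $\mathbb E[V_{t\wedge\tau}]=0$ (the martingale part of $V$ stopped at $t\wedge\tau$ being a true martingale, since its integrand is bounded and $\tau$ is an $(\mathcal F^K_{0,t})$-stopping time), hence $V_{t\wedge\tau}=0$ a.s.; injectivity of $f$ on $\overline{\mathscr C^+}$ then forces $K_{0,t\wedge\tau}(z)=\delta_{Y_{t\wedge\tau}}$ for a process $Y$ valued in $\overline{\mathscr C^+}$. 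Feeding $K_{0,u}(\epsilon f')(z)=1$ and $K_{0,u}f''(z)=0$ (valid for $u<\tau$) back into \eqref{jhk} gives $\arg(Y_{t\wedge\tau})=f(Y_{t\wedge\tau})=f(z)+W_{0,t\wedge\tau}=\arg(z)+W_{0,t\wedge\tau}$, i.e.\ $Y_{t\wedge\tau}=Z_{t\wedge\tau}$; taking a countable dense set of times and using the continuity of $t\mapsto K_{0,t}(z)$ and of $Z$ upgrades this to: a.s.\ $K_{0,t}(z)=\delta_{Z_t}=\delta_{ze^{i\epsilon(z)W_{0,t}}}$ for all $t\le\tau$, which is the assertion.

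The main obstacle is precisely the confinement $\textrm{supp}\,K_{0,t}(z)\subset\overline{\mathscr C^+}$ for $t\le\tau$ used above. I would approach it by localization: for $\delta\in(0,l/2)$ let $A_\delta=\{e^{i\theta}:\delta\le\theta\le l-\delta\}$ and $\sigma_\delta=\inf\{t\ge0:K_{0,t}(z)(\mathscr C\setminus A_\delta)>0\}$, an $(\mathcal F^K_{0,t})$-stopping time (since $t\mapsto K_{0,t}(z)(\mathscr C\setminus A_\delta)$ is lower semicontinuous, $\mathscr C\setminus A_\delta$ being open). For $t<\sigma_\delta$ the support of $K_{0,t}(z)$ lies in $A_\delta\subset\mathscr C^+$, so the second-moment computation above runs verbatim up to $t\wedge\sigma_\delta$ and yields $K_{0,t}(z)=\delta_{Z_t}$ for $t<\sigma_\delta$, hence for $t<\sigma:=\sup_\delta\sigma_\delta$. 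It then remains to show $\sigma\ge\tau$; on the complementary event $Z$ stays, up to $\sigma$, in a random compact subset of $\mathscr C^+$ at positive distance from $\{1,e^{il}\}$, one has $K_{0,\sigma_\delta}(z)=\delta_{Z_{\sigma_\delta}}$ with $Z_{\sigma_\delta}$ strictly inside $A_\delta$ for $\delta$ small, and one must rule out that mass escapes $A_\delta$ immediately after $\sigma_\delta$ — which I would do via the strong Markov property of the flow at $\sigma_\delta$ together with the fact that the one-point mean measure $x\mapsto\mathbb E[K_{s,t}(x)]$ is the law at time $t-s$ of Brownian motion on $\mathscr C$ started at $x$ (obtained by taking expectations in \eqref{jhk}). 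This confinement step is the technical heart of the lemma; it parallels the corresponding arguments in \cite{MR2235172} and \cite{MR2835247}, and can alternatively be reached by first representing $K$ as the filtering of a flow of mappings solving $(T_{\mathscr C})$ in the sense of \cite{MR2060298}, for which the confinement reduces to the obvious fact that a continuous path started in $\mathscr C^+$ remains there until it reaches a vertex.
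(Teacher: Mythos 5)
Your proof of part (i) uses the same second--moment identity as the paper (test function $f=\arg$ on $\overline{\mathscr C^+}$, apply $(T_{\mathscr C})$ to $f$ and $f^2$, show the variance process is nonnegative with zero drift on the confinement event), but you turn the confinement into a separate technical subproblem that you do not finish. The paper sidesteps it entirely by the choice of stopping time: it sets $\tilde\tau_z=\inf\{t\ge0 : K_{0,t}(z,\mathscr C^-)>0\}$, so that for $t<\tilde\tau_z$ the confinement $\mathrm{supp}\,K_{0,t}(z)\subset\mathscr C^+$ holds by definition, and the variance computation immediately gives $K_{0,t}(z)=\delta_{ze^{iW_t}}$ on $[0,\tilde\tau_z]$; the only thing left is the identification $\tilde\tau_z=\tau_0(z)$, which the paper states easily follows (one inclusion is read off the formula, the other from restarting the flow at $\tilde\tau_z$). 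Your family $\sigma_\delta$ and the final step $\sigma\ge\tau$ re-create exactly the issue that the paper's $\tilde\tau_z$ dissolves, and the strong Markov/mean-measure argument you sketch for that step is not carried out. Replacing your localization by the paper's definition of $\tilde\tau_z$ closes this gap cleanly.

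Your proof of part (ii) is genuinely different in strategy, and here there is a real gap. The paper picks a sequence $f_n\in C^2(\mathscr C)$ with $f_n'\to\epsilon$ off the vertices, applies $(T_{\mathscr C})$ at $x=1$, and observes that $\int_0^t K_{0,u}(\epsilon f_n')(1)\,dW_u\to W_t$ in $L^2$ (since the integrand tends to $K_{0,u}\mathbf 1(1)=1$, using that the mean measure is the atomless Brownian semigroup), so $W_t$ is an $L^2$-limit of $\sigma(K)$-measurable random variables; no localization and no patching are needed. You instead recover $W$ on a random initial interval where the integrand stays away from $0$ and then propose to patch along the times $\rho^k_s$. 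This is circular: $\rho^k_s$ is $\sigma(W)$-measurable by construction, and you cannot use it in a recovery algorithm whose purpose is to establish $\sigma(W)\subset\sigma(K)$; moreover, even ignoring this, the integrand $K_{s,u}(\epsilon f')(x)$ can reach $0$ strictly before $\rho_s$ (e.g.\ if mass splits evenly across a vertex), so the recovered stretch need not be $[s,\rho_s]$, and you would then have to choose a new test point adapted to the random support of $K_{s,\cdot}(x)$, which you do not explain how to do. The paper's global $L^2$-limit argument is both shorter and avoids these issues; I recommend adopting it.
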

\noindent\begin{proof}
\textbf{(i)} We follow Lemma 3.1 \cite{MR2235172}. Define
\begin{equation}\label{geor}
\mathscr C^+=\{z\in\mathscr C: \arg(z)\in ]0,l[\} \quad \hbox{ and } \quad \mathscr C^-=\mathscr C\setminus\mathscr C^+.
\end{equation}
Fix $z\in\mathscr C^+$ and let
$$\tilde \tau_z=\inf\left\{t\geq 0 : K_{0,t}(z,\mathscr C^-)>0\right\}.$$
Let $f\in C^2(\mathscr C)$ such that $f(y)=\arg(y)$ if $y\in\mathscr C^+$. By applying $f$ in $(T_{\mathscr C})$, we have for $t<\tilde \tau_z$,
\begin{equation}\label{zer}
\int_{\mathscr C} \arg(y)K_{0,t}(z,dy)=\arg(z)+W_t.
\end{equation}
By applying $f^2$ in $(T_{\mathscr C})$ and using (\ref{zer}), we also have for $t<\tilde \tau_z$,
\begin{eqnarray}
K_{0,t}f^2(z)&=&f^2(z)+2\int_{0}^{t}\int_{\mathscr C}\arg(y)K_{0,u}(z,dy)dW_u+t\nonumber\\
&=&f^2(z)+2\int_{0}^{t}(\arg(z)+W_u)dW_u+t\nonumber\\
&=& (\arg(z)+W_t)^2.\nonumber
\end{eqnarray}
Thus that for $t<\tilde \tau_z$,
$$\int_{\mathscr C}(\arg(y)-\arg(z)-W_t)^2K_{0,t}(z,dy)=K_{0,t}f^2(z)-2(\arg(z)+W_t)K_{0,t}f(z)+(\arg(z)+W_t)^2=0.$$
By continuity a.s.
$$K_{0,t}(z)=\delta_{ze^{i\epsilon(z)W_{t}}}\ \textrm{for all}\ t\in[0,\tilde\tau_z].$$
The fact that $\tau_0(z)=\tilde\tau_z$ easily follows.\\
\textbf{(ii)} Let $(f_n)_{n\geq 1}$ be a sequence in $C^2(\mathscr C)$ such that $f'_n(z)\rightarrow \epsilon(z)$ as $n\rightarrow\infty$ for all $z\in\mathscr C\setminus\{1,e^{il}\}$. Applying $f_n$ in $(T_{\mathscr C})$, we get
$$\int_0^tK_{0,u}(\epsilon f_n')(1)dW_u=K_{0,t}f_n(1)-f_n(1)-\frac{1}{2}\int_0^tK_{0,u}f_n''(1)du.$$
It is easy to check that $\int_0^tK_{0,u}(\epsilon f_n')(1)dW_u$ converges towards $W_t$ in $L^2(\mathbb P)$ as $n\rightarrow\infty$ whence in $L^2(\mathbb P)$
$$W_t=\lim_{n\rightarrow\infty}\left(K_{0,t}f_n(1)-f_n(1)-\frac{1}{2}\int_0^tK_{0,u}f_n''(1)du\right)$$
which proves (ii).
\end{proof}
\subsection{Unicity of the Wiener solution.\newline}
Our aim in this section is to prove that $(T_{\mathscr C})$ admits only one Wiener solution (i.e. such that $\sigma(W)\subset\sigma(K)$). This solution is $K^{m^+,m^-}$ with $m^+=m^-=\delta_{\frac{1}{2}}$. For this, we will essentially follow the general idea of \cite{MR1905858}: the Wiener solution is unique because its Wiener chaos decomposition can be given (see (\ref{carref}) and (\ref{ghad}) below). Let $p$ be semigroup of the standard Brownian motion on $\R$. Then the semigroup of the Brownian motion on $\mathscr C$ writes
$$P_t(e^{ix},e^{iy})=\sum_{k\in\Z} p_t(x,y+2k\pi),\ \  x,y\in[0,2\pi[.$$
For all $f\in C^1(\mathscr C)$, we easily check that $P_tf\in C^1(\mathscr C)$ and $(P_tf)'=P_tf'$. Let $Af=\frac{1}{2}f'', f\in C^2(\mathscr C)$ be the generator of $P$.
\begin{prop}
Equation $(T_{\mathscr C})$ has at most one Wiener solution: If $(K,W)$ is a solution such that  $\sigma(W)\subset\sigma(K)$, then $\forall t\geq 0, f\in C^{\infty}(\mathscr C)$ and all $z\in\mathscr C$,
\begin{equation}\label{carref}
K_{0,t}f(z)=P_tf(z)+\sum_{n=1}^{\infty}J^n_tf(z)\ \textrm{in}\ L^2(\mathbb P)
\end{equation}
where
\begin{equation}\label{ghad}
J^n_tf(z)=\int_{0<s_1<\cdots<s_n<t}P_{s_1}(D(P_{s_2-s_1}\cdots D(P_{t-s_n}f)))(z)dW_{0,s_1}\cdots dW_{0,s_n}
\end{equation}
no longer depends on $K$ and $Df(z)=\epsilon(z) f'(z)$.
\end{prop}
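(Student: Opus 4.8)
The plan is to follow the Wiener-chaos method of \cite{MR1905858}. Since a Wiener solution has $K_{0,t}f(z)\in L^2(\sigma(W))$, it admits a chaos decomposition, and the equation $(T_{\mathscr C})$ will determine its kernels recursively; this gives both \eqref{carref}--\eqref{ghad} and the uniqueness. First I would rewrite $(T_{\mathscr C})$ as
$$K_{0,t}f(z)=f(z)+\int_0^tK_{0,u}(Df)(z)\,dW_u+\int_0^tK_{0,u}(Af)(z)\,du,\qquad Df=\epsilon f',\ \ Af=\frac{1}{2}f'',$$
where $Df$ is a bounded function with jumps only at $1$ and $e^{il}$; recall that $\frac{d}{dx}$ commutes with $P_t$, so $(P_tf)'=P_tf'$ and hence $DP_tf=\epsilon\,P_tf'$.

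The central step is a semigroup substitution. Fix $f\in C^2(\mathscr C)$ and apply a generalized It\^o formula to $u\mapsto M_u:=K_{0,u}(P_{t-u}f)(z)$ on $[0,t]$: the $K$-evolution contributes the drift $K_{0,u}(AP_{t-u}f)(z)\,du$, while the explicit time-dependence of the test function contributes $K_{0,u}(\partial_uP_{t-u}f)(z)\,du=-K_{0,u}(AP_{t-u}f)(z)\,du$, so the drifts cancel and $dM_u=K_{0,u}(DP_{t-u}f)(z)\,dW_u$. As $M_0=P_tf(z)$ and $M_t=K_{0,t}f(z)$,
$$K_{0,t}f(z)=P_tf(z)+\int_0^tK_{0,s}\big(DP_{t-s}f\big)(z)\,dW_s .$$
The generalized It\^o formula is justified by freezing the test function $P_{t-u}f$ on the intervals of a subdivision, using the flow property $K_{0,u}=K_{0,u_i}K_{u_i,u}$ and $(T_{\mathscr C})$ piecewise, and letting the mesh tend to $0$. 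Iterating this identity --- now applied to the bounded, non-$C^2$ functions $DP_{t-s}f=\epsilon P_{t-s}f'$ occurring as integrands, for which the identity is recovered by applying it to $P_\delta(DP_{t-s}f)\in C^\infty(\mathscr C)$ and letting $\delta\downarrow0$ --- gives after $N$ steps $K_{0,t}f(z)=\sum_{n=0}^NJ^n_tf(z)+R^N_t$, with $J^0_tf(z)=P_tf(z)$, $J^n_tf(z)$ as in \eqref{ghad}, and $R^N_t$ an $(N{+}1)$-fold iterated integral whose innermost integrand is $K_{0,s_{N+1}}\big(DP_{s_N-s_{N+1}}\cdots DP_{t-s_1}f\big)(z)$.

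Two points require care. For the passage $\delta\downarrow0$ one needs $K_{0,s}(P_\delta h)(z)\to K_{0,s}(h)(z)$; this holds since $P_\delta h\to h$ boundedly and pointwise off $\{1,e^{il}\}$, once one knows that $K_{0,s}(z)$ charges neither vertex. That follows from the zeroth-order identity $E[K_{0,s}f(z)]=P_sf(z)$ --- obtained by taking expectations in $(T_{\mathscr C})$, where the martingale term drops, and iterating --- because then $E\big[K_{0,s}(z)(\{1,e^{il}\})\big]\le P_s(z,\{1,e^{il}\})=0$. The second and main obstacle is $R^N_t\to0$ in $L^2(\mathbb P)$, equivalently $\sum_n\|J^n_tf(z)\|_{L^2}^2<\infty$. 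One cannot simply estimate $\|DP_rg\|_\infty\le Cr^{-1/2}\|g\|_\infty$ and integrate over the simplex, as the resulting integral diverges; instead I would expand each $DP_r$ via $(\epsilon\psi)'=\epsilon\psi'+2\psi(1)\delta_1-2\psi(e^{il})\delta_{e^{il}}$ and observe that a chain of Dirac contributions smoothed by the intervening semigroups recombines through Chapman--Kolmogorov, $\big|P_{a_1}(\epsilon P_{a_2}(\epsilon\cdots P_{a_k}(\cdot\,,x_j)))(z)\big|\le p_{a_1+\cdots+a_k}(z,x_j)\le C(a_1+\cdots+a_k)^{-1/2}$, so that the singularities live in cumulative, not incremental, times. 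Integrating these bounds over $\{0<s_1<\cdots<s_n<t\}$ then produces a factor of order $t^n/n!$ which dominates the number of terms, yielding summability and $R^N_t\to0$; this is \eqref{carref}.

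Finally, uniqueness is immediate: formulas \eqref{carref}--\eqref{ghad} express $K_{0,t}f(z)$ as a fixed measurable functional of $(W_u)_{0\le u\le t}$ that does not depend on the particular solution. Using the stationarity of the increments of $K$ jointly with $W$, the flow property, and the independence of $K$ over disjoint intervals, the joint law of any finite family $\big(W,(K_{s_i,t_i}f_j(z_k))\big)$ is then determined by the law of $W$ alone; hence the law of the flow $K$ is uniquely determined, and $(T_{\mathscr C})$ has at most one Wiener solution.
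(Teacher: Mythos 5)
Your derivation of the key identity
$$K_{0,t}f(z)=P_tf(z)+\int_0^t K_{0,s}\bigl(DP_{t-s}f\bigr)(z)\,dW_s$$
is the same as the paper's Lemma~\ref{kwir}: the ``generalized It\^o formula'' you invoke is precisely the discretize-freeze-and-pass-to-the-limit argument, which the paper organizes into three error terms ($A_1(n), A_2(n), A_3(n)$) and controls using $\|K_u g\|_\infty\le\|g\|_\infty$ and the strong continuity of $P$. Where you part ways with the paper is in the endgame. You iterate the identity $N$ times to a remainder $R^N_t$ and then undertake to prove \emph{directly} that $R^N_t\to 0$ in $L^2$, i.e.\ that $\sum_n\|J^n_tf(z)\|_{L^2}^2<\infty$, via the distributional expansion $\epsilon'=2\delta_1-2\delta_{e^{il}}$ and a Chapman--Kolmogorov recombination of Dirac chains. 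This step is where your argument has a genuine gap: expanding each of the $n$ occurrences of $D=\epsilon\,\partial$ generates an exponentially growing tree of terms in which the intermediate chains are not pure products of heat kernels (they still carry the factors $\epsilon$ and the derivatives $(P_a f)'$ at the jump points), so the clean bound $p_{a_1+\cdots+a_k}(z,x_j)\lesssim(a_1+\cdots+a_k)^{-1/2}$ cannot be applied verbatim; showing that after the exponential term count the $1/\sqrt{\cdot}$ singularities still integrate over the simplex to something summable is a real piece of analysis that you have only sketched. Likewise, the passage from $f\in C^\infty$ to the non-smooth integrands $DP_{t-s}f$ (your $P_\delta$-regularization plus the observation that $K_{0,s}(z)$ charges no vertex) is plausible but not carried out.

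The paper avoids all of this. Since $(K,W)$ is a Wiener solution, $K_{0,t}f(z)$ is a bounded $\sigma(W)$-measurable random variable, hence lies in $L^2(\sigma(W))$ and \emph{automatically} admits an $L^2$-convergent Wiener chaos decomposition $K_{0,t}f(z)=P_tf(z)+\sum_{n\ge1}J^n_tf(z)$; there is nothing to prove about convergence. Iterating Lemma~\ref{kwir} then serves only to \emph{identify} each chaos kernel as the explicit expression~\eqref{ghad}, which no longer involves $K$, and uniqueness follows. In short: your core lemma and its proof are the paper's, but the summability you labor to establish is already granted by the It\^o--Wiener chaos theorem, and replacing your direct (and incomplete) convergence argument by that a priori fact is both shorter and removes the gap.
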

\begin{proof}  Let $(K,W)$ be a solution of $(T_{\mathscr C})$ (not necessarily a Wiener flow). Our first aim is to establish the following
\begin{lemma}\label{kwir}
Fix $f\in C^{\infty}(\mathscr C)$ and $z\in\mathscr C$. Then
$$K_{0,t}f(z)=P_{t}f(z)+\int_{0}^{t}K_{0,u}(D(P_{t-u}f))(z)dW_u.$$
\end{lemma}
\begin{proof}
\noindent Let $f\in C^{\infty}(\mathscr C), z\in\mathscr C$ and denote $K_{0,t}$ simply by $K_{t}$. Note that the stochastic integral in the right-hand side is well defined:
$$\int_{0}^{t}E[K_{u}(D(P_{t-u}f))(z)]^2du\leq \int_{0}^{t}P_u((D(P_{t-u}f))^2)(z)du\leq \int_{0}^{t}||(P_{t-u}f)'||^2_{\infty}du$$
and the right-hand side is smaller than $t||f'||^2_{\infty}$. Now
$$K_{t}f(z)-P_{t}f(z)-\int_{0}^{t}K_{u}(D(P_{t-u}f))(z)dW_u=\displaystyle\sum_{p=0}^{n-1}(K_{\frac{{(p+1)t}}{n}}P_{t-\frac{{(p+1)t}}{n}}f-K_{\frac{{p}t}{n}}P_{t-\frac{{p}t}{n}}f)(z)$$
$$-\displaystyle\sum_{p=0}^{n-1}\int_{\frac{pt}{n}}^{\frac{(p+1)t}{n}}K_{u}D((P_{t-u}-P_{t-\frac{(p+1)t}{n}})f)(z)dW_u-\displaystyle\sum_{p=0}^{n-1}\int_{\frac{pt}{n}}^{\frac{(p+1)t}{n}}K_{u}D(P_{t-\frac{(p+1)t}{n}}f)(z)dW_u.$$
For all $p\in\{0,..,n-1\}$, set $f_{p,n}=P_{t-\frac{(p+1)t}{n}}f\in C^{\infty}(\mathscr C)$ and so by replacing $f$ by $f_{p,n}$ in $(T_{\mathscr C})$, we get
$$\int_{\frac{pt}{n}}^{\frac{(p+1)t}{n}}K_{u}(Df_{p,n})(z)dW_u=K_{\frac{(p+1)t}{n}}f_{p,n}(z)-K_{\frac{pt}{n}}f_{p,n}(z)-\int_{\frac{pt}{n}}^{\frac{(p+1)t}{n}}K_{u}(Af_{p,n})(z)du$$
$$=K_{\frac{(p+1)t}{n}}f_{p,n}(z)-K_{\frac{pt}{n}}f_{p,n}(z)-\frac{t}{n}K_{\frac{pt}{n}}(Af_{p,n})(z)-\int_{\frac{pt}{n}}^{\frac{(p+1)t}{n}}(K_{u}-K_{\frac{pt}{n}})(Af_{p,n})(z)du.$$
Then we can write
$$K_{t}f(z)-P_{t}f(z)-\int_{0}^{t}K_{u}(D(P_{t-u}f))(z)dW_u=A_1(n)+A_2(n)+A_3(n),$$
where
\begin{eqnarray}
A_1(n)&=&-\displaystyle\sum_{p=0}^{n-1}K_{\frac{pt}{n}}[P_{t-\frac{pt}{n}}f-P_{t-\frac{(p+1)t}{n}}f-\frac{t}{n}AP_{t-\frac{(p+1)t}{n}}f](z),\nonumber\\
A_2(n)&=&-\displaystyle\sum_{p=0}^{n-1}\int_{\frac{pt}{n}}^{\frac{(p+1)t}{n}}K_{u}D((P_{t-u}-P_{t-\frac{(p+1)t}{n}})f)(z)dW_u,\nonumber\\
A_3(n)&=&\displaystyle\sum_{p=0}^{n-1}\int_{\frac{pt}{n}}^{\frac{(p+1)t}{n}}(K_{u}-K_{\frac{pt}{n}})AP_{t-\frac{(p+1)t}{n}}f(z)du.\nonumber\
\end{eqnarray}
 Using $||K_{u}g||_{\infty}\leq ||g||_{\infty}$ for $g$ a bounded measurable function, we see that $|A_1(n)|$ is less than
$$\displaystyle\sum_{p=0}^{n-1}\left|\left|P_{t-\frac{(p+1)t}{n}}[P_{\frac{t}{n}}f-f-\frac{t}{n} Af]\right|\right|_{\infty}
\leq n\left|\left|P_{\frac{t}{n}}f-f-\frac{t}{n}Af\right|\right|_{\infty}=t\left|\left|\frac{P_{\frac{t}{n}}f-f}{\frac{t}{n}}-Af\right|\right|_{\infty}.$$
Since $f\in C^{\infty}(\mathscr C)$, this shows that $A_1(n)$ converges to $0$ as $n\rightarrow\infty$. Note that $A_2(n)$ is the sum of orthogonal terms in $L^2(\mathbb P)$. Consequently
$$||A_2(n)||^2_{L^2(\mathbb P)}=\displaystyle\sum_{p=0}^{n-1}\left|\left|\int_{\frac{pt}{n}}^{\frac{(p+1)t}{n}}K_{u}D((P_{t-u}-P_{t-\frac{(p+1)t}{n}})f)(z)dW_u\right|\right|^2_{L^2(\mathbb P)}.$$
By applying Jensen's inequality, we arrive at $$||A_2(n)||^2_{L^2(\mathbb P)}\leq\displaystyle\sum_{p=0}^{n-1}\int_{\frac{pt}{n}}^{\frac{(p+1)t}{n}}P_{u}V^2_u(z)du$$
where $V_u=(P_{t-u}f)'-(P_{t-\frac{(p+1)t}{n}}f)'=P_{t-u}f'-P_{t-\frac{(p+1)t}{n}}f'$. For all $u\in[\frac{pt}{n},\frac{(p+1)t}{n}]$, we have
$$P_{u}V^2_u(z)\leq ||V_u||^2_{\infty}=\left|\left|P_{t-\frac{(p+1)t}{n}}\left(P_{\frac{(p+1)t}{n}-u}f'-f'\right)\right|\right|^2_{\infty}\leq ||P_{\frac{(p+1)t}{n}-u}f'-f'||^2_{\infty}.$$
Consequently
$$||A_2(n)||^2_{L^2(\mathbb P)}\leq\displaystyle\sum_{p=0}^{n-1}\int_{\frac{pt}{n}}^{\frac{(p+1)t}{n}}||P_{\frac{(p+1)t}{n}-u}f'-f'||^2_{\infty}du=n\int_{0}^{\frac{t}{n}}||P_{u}f'-f'||^2_{\infty}du,$$
and one can deduce that $A_2(n)$ tends to 0 as $n\rightarrow+\infty$ in $L^2(\mathbb P)$. Now

$$||A_3(n)||_{L^2(\mathbb P)}\leq \displaystyle\sum_{p=0}^{n-1}\left|\left|\int_{\frac{pt}{n}}^{\frac{(p+1)t}{n}}(K_{u}-K_{\frac{pt}{n}})AP_{t-\frac{(p+1)t}{n}}f(z)du\right|\right|_{L^2(\mathbb P)}.$$
Set $h_{p,n}=AP_{t-\frac{(p+1)t}{n}}f$. Then $h_{p,n}\in C^{\infty}(\mathscr C)$ for all $p\in[0,n-1]$. By the Cauchy-Schwarz inequality

$$||A_3(n)||_{L^2(\mathbb P)}\leq\sqrt{t}\left\{\sum_{p=0}^{n-1}\int_{\frac{pt}{n}}^{\frac{(p+1)t}{n}}E[((K_{u}-K_{\frac{pt}{n}})h_{p,n}(z))^2]du\right\}^{\frac{1}{2}}.$$
If $u\in [\frac{pt}{n},\frac{(p+1)t}{n}]$:
\begin{eqnarray}
E[((K_{u}-K_{\frac{pt}{n}})h_{p,n}(z))^2]&\leq& E[K_{\frac{pt}{n}}(K_{\frac{pt}{n},u}h_{p,n}-h_{p,n})^2(z)]\nonumber\
\\
&\leq&E[K_{\frac{pt}{n}}(K_{\frac{pt}{n},u}h_{p,n}^2-2h_{p,n}K_{\frac{pt}{n},u}h_{p,n}+h_{p,n}^2)(z)]\nonumber\
\\
&\leq&P_{\frac{pt}{n}}\left(P_{u-\frac{pt}{n}}h_{p,n}^2-2h_{p,n}P_{u-\frac{pt}{n}}h_{p,n}+h_{p,n}^2\right)(z)\nonumber\
\\
&\leq&||P_{u-\frac{pt}{n}}h_{p,n}^2-2h_{p,n}P_{u-\frac{pt}{n}}h_{p,n}+h_{p,n}^2||_{\infty}\nonumber\
\\
&\leq&2||h_{p,n}||_{\infty}||P_{u-\frac{pt}{n}}h_{p,n}-h_{p,n}||_{\infty}+||P_{u-\frac{pt}{n}}h_{p,n}^2-h_{p,n}^2||_{\infty}.\nonumber\
\end{eqnarray}
Therefore $||A_3(n)||_{L^2(\mathbb P)}\leq \sqrt{t}(2C_1(n)+C_2(n))^{\frac{1}{2}}$, where
$$C_1(n)=\displaystyle\sum_{p=0}^{n-1}||h_{p,n}||_{\infty}\int_{\frac{pt}{n}}^{\frac{(p+1)t}{n}}||P_{u-\frac{pt}{n}}h_{p,n}-h_{p,n}||_{\infty}du$$
and $$C_2(n)=\displaystyle\sum_{p=0}^{n-1}\int_{\frac{pt}{n}}^{\frac{(p+1)t}{n}}||P_{u-\frac{pt}{n}}h_{p,n}^2-h_{p,n}^2||_{\infty}du.$$
From $||h_{p,n}||_{\infty}\leq ||Af||_{\infty}$ and $||P_{u-\frac{pt}{n}}h_{p,n}-h_{p,n}||_{\infty}\leq ||P_{u-\frac{pt}{n}}Af-Af||_{\infty}$, we get
$$C_1(n)\leq||Af||_{\infty}\displaystyle\sum_{p=0}^{n-1}\int_{\frac{pt}{n}}^{\frac{(p+1)t}{n}}||P_{u-\frac{pt}{n}}Af-Af||_{\infty}du\leq||Af||_{\infty}\int_{0}^{t}||P_{\frac{s}{n}}Af-Af||_{\infty}ds.$$
As $Af\in C^{\infty}(\mathscr C)$, $C_1(n)$ tends to $0$ obviously. On the other hand, $h_{p,n}^2\in C^{\infty}(\mathscr C)$ and so $$C_2(n)=\frac{1}{n}\displaystyle\sum_{p=0}^{n-1}\int_{0}^{t}||P_{\frac{s}{n}}h_{p,n}^2-h_{p,n}^2||_{\infty}ds\leq \frac{1}{n}\displaystyle\sum_{p=0}^{n-1}\int_{0}^{t}\left(\int_{0}^{\frac{s}{n}}||A h_{p,n}^2||_{\infty}du\right)ds.$$
Now we easily verify that $h_{p,n}, h_{p,n}', h_{p,n}''$ are uniformly bounded with respect to $n$ and $0\leq p\leq n-1$. As a result $C_2(n)$ tends to $0$ as $n\rightarrow\infty$. This establishes Lemma \ref{kwir}.
\end{proof}
Assume that $(K,W)$ is a Wiener solution of $(T_{\mathscr C})$ and for $t\geq 0, f\in C^{\infty}(\mathscr C)$ and $z\in\mathscr C$, let $K_{0,t}f(z)=P_tf(z)+\sum_{n=1}^{\infty}J^n_tf(z)$
be the decomposition in Wiener chaos of $K_{0,t}f(z)$ in $L^2$ sense. By iterating the identity of Lemma \ref{kwir},  we see that for all $n\geq1$, $J^n_tf(z)$ is given by (\ref{ghad}).
\end{proof}
\noindent\textbf{Consequences:} Let $K^{W}$ be the unique Wiener solution of $(T_{\mathscr C})$. Since $\sigma(W)\subset\sigma (K)$, we can define $K^{*}$ the stochastic flow obtained by filtering $K$ with respect to $\sigma(W)$ (Lemma 3-2 (ii) in \cite{MR2060298}). Then, for all $s\leq t$ and all $z\in \mathscr C$, a.s.
$$K_{s,t}^{*}(z)=E[K_{s,t}(z)|\sigma(W)].$$
As a result, $(K^{*},W)$ solves also $(T_{\mathscr C})$ and by the last proposition, for all $s\leq t$ and all $z\in \mathscr C$, a.s.
\begin{equation}\label{k}
E[K_{s,t}(z)|\sigma(W)]=K_{s,t}^{W}(z).
\end{equation}
\subsection{Proof of Theorem \ref{a} (2).\newline}
Using the flow property and the independence of increments satisfied by $K$, it is easily seen that the law of $(K_{0,t_1},\cdots,K_{0,t_n})$ for all $(t_1,\cdots,t_n)\in(\R_+)^n$ and therefore the law of $K$ is uniquely determined by the knowledge of the law of $K_{0,t}$ for all $t\geq 0$. In the sequel, we will show the existence of two probability measures $m^+$ and $m^-$ on $[0,1]$ with mean $\frac{1}{2}$ such that for all $t\geq 0, K^{m^+,m^-}_{0,t}\overset{law}{=}K_{0,t}$ which will imply Part (2) of Theorem \ref{a}.
\subsubsection{A stochastic flow of mappings associated to $K$.\label{tss}\newline}
Let $P_t^{n}=E[K_{0,t}^{\otimes n}]$ be the consistent family of Feller semigroups associated to $K$. By Theorem 4.1 \cite{MR2060298}, a consistent family of coalescent Markovian semigroups $(P^{n,c})_{n\geq 1}$ is associated to $(P^{n})_{n\geq 1}$. The Feller process associated to $P^n$ (resp. to $P^{n,c}$) will be called the $n$-point motion of $P^n$ (resp. to $P^{n,c}$). The consistent family $(P^{n,c})_{n\geq 1}$ will be such that
\begin{itemize}
\item[(i)] The $n$-point motion of $P^{n,c}$ up to its entrance time in $\Delta_n$ is distributed as the $n$-point motion of $P^n$ up to its entrance time in $\Delta_n$, where $\Delta_n=\{x\in\mathscr C^n;\;  \exists i\neq j, \; x_i=x_j\}$.
\item[(ii)] The $n$-point motion $(X^1,\dots,X^n)$ of $P^{n,c}$ is such that if $X^i_0=X^j_0$ then for all $t>0$, $X^i_t=X^j_t$.
\end{itemize}
A possible construction of such a family is the following. Fix $(x^1,\cdots,x^n)\in \mathscr C ^n$ and let $X=(X^1,\dots,X^n)$ be the $n$ point motion started at $(x^1,\cdots,x^n)$ associated to $P^n$.  Let $$T_1=\inf\{t\geq 0, \exists i\neq j,\ X_t^i=X_t^j\}.$$
For $t\in [0,T_1]$, define $Y_t:=X_t$. Let $1\le i_1<\dots<i_k\le n$ be such that $\{Y_{T_1}^{i_j};\;1\le j\le k\}=\{Y_{T_1}^i;\;1\le i\le n\}$ and where $k=\hbox{Card}\{Y^i_{T_1};\;1\le i\le n\}$.
Then define the process 
$$Z_t^i=X_t^{i_j}\quad \textrm{for } t\geq T_1
\hbox{ and when } Y_{T_1}^i=Y_{T_1}^{i_j}.$$
Now set $$T_2=\inf\{t\geq T_1, \exists j\neq l, \ Z_t^{i_j}=Z_t^{i_l}\}.$$ 
For $t\in [T_1,T_2]$, we define $Y_t=Z_t$ and so on. 


In this way, we construct a Markov process $Y$. It is the $n$ point motion of the family of semigroup $P^{n,c}$. 
Note that such a construction does not insure that these semigroups are fellerian.
\begin{lemma}\label{ww}
$(P^{n,c})_{n\geq 1}$ is a consistent family of coalescent Feller semigroups associated with a flow of mappings $\varphi^c$.
\end{lemma}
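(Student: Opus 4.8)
The plan is to verify the three properties of the family $(P^{n,c})_{n\ge 1}$ asserted in the statement --- consistency, the coalescence property, and the Feller property --- and then to invoke the correspondence between consistent families of coalescent Feller semigroups and stochastic flows of mappings from \cite{MR2060298} in order to produce $\varphi^c$. The first two properties are essentially built into the pathwise construction given above; the real work is the Feller property, which the construction as written does not deliver for free.

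\emph{Consistency, coalescence and the Markov property.} For the $n$-point motion $Y$ of $P^{n,c}$ and any $1\le i_1<\dots<i_k\le n$, consistency of $(P^n)_{n\ge 1}$ shows that $(Y^{i_1},\dots,Y^{i_k})$, run until its entrance in $\Delta_k$, is a $k$-point motion of $P^k$; and the coalescing prescription used to define $Y$ (merge two coordinates as soon as they coincide, and keep them equal thereafter) visibly commutes with restriction to a subset of coordinates. Hence $(Y^{i_1},\dots,Y^{i_k})$ is a $k$-point motion of $P^{k,c}$, which is the consistency of $(P^{n,c})_{n\ge1}$. Property (ii) (if $X^i_0=X^j_0$ then $X^i_t=X^j_t$) and property (i) (agreement with $P^n$ up to entrance in $\Delta_n$) hold by construction. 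That $Y$ is a Markov process follows from the strong Markov property of the $P^n$-motion at the successive meeting times $T_1<T_2<\cdots$ together with consistency: conditionally on the past up to $T_j$, the distinct coordinates perform a fresh Feller motion of the appropriate dimension.

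\emph{The Feller property.} Since $\mathscr C^n$ is compact it suffices to prove that $x\mapsto P^{n,c}_tf(x)$ is continuous for every $f\in C(\mathscr C^n)$ and $t\ge0$; combined with the pointwise convergence $P^{n,c}_tf(x)\to f(x)$ as $t\downarrow0$ (right-continuity of the paths of $Y$) and an equicontinuity argument on the compact $\mathscr C^n$, this yields both $P^{n,c}_t\colon C(\mathscr C^n)\to C(\mathscr C^n)$ and strong continuity at $0$. To prove the continuity in $x$ I would couple, on a single probability space, the coalescent $n$-point motions issued from all nearby configurations: away from the two vertices each coordinate is rigidly transported, its argument moving by $\pm W$ for one Brownian motion $W$; at a vertex it leaves by an amount equal to the current excursion height of $W$, on a side chosen by independent randomness (this is exactly the mechanism of Section \ref{klp}, as reflected in Lemma \ref{rz}(i) and in the construction of $\varphi$); and coordinates are merged when they meet. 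In this coupling the map $x\mapsto(\text{path from }x)$ is continuous off a set of configurations for which an arbitrarily small perturbation changes which coordinates meet, or makes a coordinate graze a vertex. As in the proof of Proposition \ref{mm}, the probability that the coupled motions issued from $x^{(m)}$ and from $x$ differ by more than $\eta$ on $[0,t]$ is dominated by the probability of such events, which tends to $0$ as $x^{(m)}\to x$, because a Brownian motion reaches a given level at a prescribed time with probability $0$ and its first hitting times depend continuously on the starting point. One treats $n=2$ directly and passes to general $n$ by induction, using consistency to reduce meetings of several particles to meetings of pairs.

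Once $(P^{n,c})_{n\ge1}$ is known to be a consistent family of coalescent Feller semigroups, the correspondence of \cite{MR2060298} provides a stochastic flow of mappings $\varphi^c$ on $\mathscr C$ whose $n$-point motions have semigroups $P^{n,c}$, which is the assertion of the lemma. The main obstacle is the continuity step: one must control, uniformly as the initial configuration varies, the probability that a small displacement of the particles produces a macroscopically different coalescing or branching pattern at the vertices $1$ and $e^{il}$; this is in the spirit of Proposition \ref{mm}, but tracking $n$ interacting particles and their successive meeting times makes the bookkeeping substantially heavier.
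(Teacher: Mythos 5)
Your plan correctly identifies that the substantive issue is the Feller property and that consistency and coalescence are essentially automatic from the pathwise construction, but it misses the key structural reduction that the paper uses, and as a result the core of your argument is only a sketch.

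The paper does not verify the Feller property of $P^{n,c}$ for each $n$ by a coupling argument. It invokes Theorem~4.1 of \cite{MR2060298}, which asserts that the family $(P^{n,c})_{n\ge1}$ obtained from $(P^n)_{n\ge1}$ by this coalescing prescription is automatically a consistent family of \emph{coalescent Feller} semigroups associated with a flow of mappings, provided a single condition \emph{on the two-point motion of $P^2$} (not $P^{2,c}$) holds: for all $t>0$, $\varepsilon>0$, $x\in\mathscr C$,
$$\lim_{y\to x}\mathbb Q\big(\{T^{x,y}>t\}\cap\{d(X^x_t,Y^y_t)>\varepsilon\}\big)=0,$$
where $(X^x,Y^y)$ is the (non-coalesced) two-point motion of $P^2$, built so that its conditional law given $\omega$ is $K_{0,t}(x)\otimes K_{0,t}(y)$, and $T^{x,y}$ is its meeting time. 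The paper then checks only this condition, using two facts already available at that stage: Lemma~\ref{rz}(i) (the one-point motion moves rigidly, $X^y_s=ye^{i\epsilon(y)W_s}$ for $s\le\tau(y)$) and equation \eqref{k} (which forces $X^1_t\in\{e^{iW^+_t},e^{-iW^+_t}\}$ for $t\le\rho$). The case $x\in\{1,e^{il}\}$ is handled directly, and general $x$ by the strong Markov property of $P^2$ at the first hitting time of a vertex.

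Your proposal instead tries to establish Feller continuity of $x\mapsto P^{n,c}_tf(x)$ for every $n$ by coupling $n$ coalescing particles and controlling the probability of a macroscopic change of the collision/branching pattern, reducing to $n=2$ by induction. There are two problems. First, you never actually carry out the estimate; you describe what you ``would'' do and yourself acknowledge that ``the bookkeeping is substantially heavier.'' Second, and more seriously, your coupling description (rigid transport away from the vertices, independent choice of side at a vertex, driven by one Brownian motion $W$) implicitly assumes the $n$-point motion has the explicit structure of the flow $\varphi$ constructed in Section~2; but Lemma~\ref{ww} is in Section~3 about an \emph{arbitrary} solution $K$ of $(T_{\mathscr C})$, and at this point one only knows the two facts above — you cannot assume the full description of the $n$-point motion before having established the existence of $\varphi^c$, since that description (via \eqref{h} and \eqref{label}) is precisely a \emph{consequence} of this lemma. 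The paper avoids the circularity by working only with the two-point motion of $P^2$ and using Lemma~\ref{rz}(i) and \eqref{k}, and it avoids the heavy $n$-particle bookkeeping entirely by citing the reduction of Theorem~4.1 of \cite{MR2060298}. Without that theorem, your plan is not a complete proof.
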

\begin{proof}
For each $(x,y)\in \mathscr C^2$, let $(X^x_t,Y^y_t)_{t\geq0}$ be the two point motion started at $(x,y)$ associated with $P^{2}$ constructed as in Section 2.6 \cite{MR2060298} on an extension $(\Omega\times \Omega',\mathcal E,\mathbb Q)$ of $(\Omega,\mathcal{A},\mathbb P)$ such that the law of $(X^x_t,Y^y_t)$ given $\omega\in\Omega$ is $K_{0,t}(x)\otimes K_{0,t}(y)$. Define
$$T^{x,y}:=\inf\{t\geq0 : X^x_t=Y^y_t\}.$$
By Theorem $4.1$ \cite{MR2060298}, we only need to check that: for all $t>0,\varepsilon>0$ and $x\in \mathscr C,$ $$\lim\limits_{\substack{y \to x}}\mathbb Q(\{T^{x,y}>t\}\cap\{d(X^x_t,Y^y_t)>\varepsilon\})=0 \ \ \ \ \ \ (C).$$
Fix $t>0$ and $\epsilon>0$.\\
\textbf{First case $x=1$.} Recall that for all $s\in[0,\rho]$ where $\rho=\rho_0$, we have $$K^W_{0,t}(1)=\frac{1}{2}(\delta_{e^{iW^+_t}}+\delta_{e^{-iW^+_t}}).$$
This shows that when $t\leq\rho$, $K_{0,t}(1)$ is supported on $\{e^{iW^+_t},e^{-iW^+_t}\}$ and so $X^1_t=e^{iW^+_t}$ or $e^{-iW^+_t}$. Moreover, by Lemma \ref{rz} (i), if $y\notin\{1,e^{il}\}$, then $X^y_s=ye^{i\varepsilon(y)W_s}$ for all $s\in[0,\tau(y)]$ where $\tau(y)=\tau_0(y)$ .\\
Let $A=\{T^{1,y}>t\}\cap\{d(X^1_t,Y^y_t)>\varepsilon\}$ with $y$ close to $1$ such that $y\neq 1$ and write $$\mathbb Q(A)=\mathbb Q(A\cap\{t\leq\tau(y)\})+\mathbb Q(A\cap\{t>\tau(y)\}).$$
Since $\tau(y)$ tends to $0$ as $y$ goes to $1$, we have $\lim_{y\rightarrow 1}\mathbb Q(A\cap\{t\leq\tau(y)\})=0$. Moreover $$\mathbb Q(A\cap\{t>\tau(y)\})\leq\mathbb Q(B)+\mathbb Q(X^y_{\tau(y)}=e^{il}).$$
where $B=A\cap\{t>\tau(y), X^y_{\tau(y)}=1\}$. Obviously
$$\mathbb Q(B)\leq\mathbb Q(B\cap\{\tau(y)<\rho\})+\mathbb Q(\tau(y)\geq\rho)$$
with $\lim_{y\rightarrow 1}\mathbb Q(\tau(y)\geq\rho)=0$. On $B\cap\{\tau(y)<\rho\}$, we have $X^1_{\tau(y)}=X^y_{\tau(y)}=1$ and thus $T^{1,y}\leq \tau(y)$. As a result $$\mathbb Q(B\cap\{\tau(y)<\rho\})\leq\mathbb Q(t<T^{1,y}\leq\tau(y)).$$
Since the right-hand side converges to $0$ as $y\rightarrow1$, $(C)$ is satisfied for $x=1$.\\
\textbf{Second case $x\neq 1$.} By analogy $(C)$ is satisfied for $x=e^{il}$. Let $x\notin\{1,e^{il}\}$ and $y$ be close to $x$, then $X^x$ and $X^y$ move parallely until one of them reaches $1$ or $e^{il}$ say at time $T$. Since $P^2$ is Feller, the strong Markov property at time $T$ and the established result for $x\in\{1,e^{il}\}$ allows to deduce $(C)$ for $x$.
\end{proof}
\noindent\textbf {Consequences:} By the proof of Theorem 4.2 \cite{MR2060298}, there exists a joint realization $(K^1,K^2)$ on a probability space $(\tilde{\Omega},\tilde{\mathcal A},\tilde{\mathbb P})$ where $K^1$ and $K^2$ are two stochastic flows of kernels satisfying $K^1\overset{law}{=}\delta_{\varphi^c}$, $K^2\overset{law}{=}K$ and such that:
\begin{enumerate}
\item [(i)] $\hat K_{s,t}(x,y)=K^1_{s,t}(x)\otimes K^2_{s,t}(y)$ is a stochastic flow of kernels on $\mathscr C^2$,
\item [(ii)] For all $s\leq t, z\in \mathscr C$, a.s. $K^2_{s,t}(z)=E[K^1_{s,t}(z)|K^2]$.
\end{enumerate}
To simplify notations, we will denote $(K^1,K^2)$ by $(\delta_{\varphi^c},K)$. Recall that (i) and (ii) are also satisfied  by the pair $(\delta_{\varphi},K^{m^+,m^-})$ constructed in Section \ref{ss}. Now (ii) rewrites, for all $s\leq t, z\in \mathscr C$,
\begin{equation}\label{h}
K_{s,t}(z)=E[\delta_{\varphi^{c}_{s,t}(z)}|K]\ \ a.s.
\end{equation}
and using (\ref{k}), we obtain, for all $s\leq t, z\in \mathscr C$,
\begin{equation}\label{label}
K^{W}_{s,t}(z)=E[\delta_{\varphi^{c}_{s,t}(z)}|\sigma(W)]\ a.s.
\end{equation}
with $K^W$ being the Wiener solution.
\subsubsection{The law of $K$.\newline}
\noindent Recall the definitions of $\mathscr C^+$ and $\mathscr C^-$ from (\ref{geor}) and set for all $s\leq t$, $$U^+_{s,t}=K_{s,t}(1,\mathscr C^+)\ \textrm{and} \ U^-_{s,t}=K_{s,t}(e^{il},\mathscr C^-).$$
\begin{prop}\label{wouk}
Recall the definition of $\rho_s$ from (\ref{mom}). Then
\begin{itemize}
\item[(i)] There exist two probability measures $m^+$ and $m^-$ on $[0,1]$ with mean $\frac{1}{2}$ such that for all $s<t$, conditionally to $\{s<t<\rho_s\}$, $U^{\pm}_{s,t}$ is independent of $W$ and has for law $m^{\pm}$. Moreover, for all $s\in\R, z\in\mathscr C$, a.s. $\forall t\in[s,\rho_s]$,
\begin{eqnarray}
K_{s,t}(z)&=&\delta_{ze^{i\epsilon(z)W_{s,t}}} 1_{\{t\leq \tau_s(z)\}}\nonumber\\
&+&\left(K_{s,t}(1)1_{\{ze^{i\epsilon(z)W_{s,\tau_{s}(z)}}=1\}}+K_{s,t}(e^{il})1_{\{ze^{i\epsilon(z)W_{s,\tau_{s}(z)}}=e^{il}\}}\right)1_{\{t>\tau_s(z)\}}\nonumber\
\end{eqnarray}
where
\begin{eqnarray}
K_{s,t}(1)&=&U^+_{s,t}\delta_{\exp(iW^+_{s,t})}+(1-U^{+}_{s,t})\delta_{\exp(-iW^+_{s,t})},\nonumber\\
K_{s,t}(e^{il})&=&U^-_{s,t}\delta_{\exp(i(l+W^-_{s,t}))}+(1-U^{-}_{s,t})\delta_{\exp(i(l-W^-_{s,t}))}.\nonumber\
\end{eqnarray}
\item [(ii)] For all $s<t$, conditionally to $\{\rho_s>t\}$, $U^+_{s,t}, U^-_{s,t}$ and $W$ are independent.
\end{itemize}
\end{prop}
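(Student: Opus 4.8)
I would prove the three assertions of Proposition~\ref{wouk} --- the explicit form of $K_{s,t}(z)$ on $[s,\rho_s]$ (part~(i)), the conditional law of $U^{\pm}_{s,t}$ given $W$, and the joint independence (part~(ii)) --- in turn.

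\textbf{(a) The explicit form.} First I would identify the Wiener solution: the flow $K^{\delta_{1/2},\delta_{1/2}}$ built in Section~\ref{TAB} is $\sigma(W)$-measurable (its splitting parameters are the constant $\frac12$), hence by the uniqueness of the Wiener solution established above $K^{W}=K^{\delta_{1/2},\delta_{1/2}}$, and Theorem~\ref{a}(1) gives, on $\{s\le t<\rho_s\}$, $K^{W}_{s,t}(1)=\frac12\delta_{e^{iW^{+}_{s,t}}}+\frac12\delta_{e^{-iW^{+}_{s,t}}}$, with the analogous formula at $e^{il}$. Since $\sigma(W)\subset\sigma(K)$, \eqref{k} yields $E[K_{s,t}(1)\mid\sigma(W)]=K^{W}_{s,t}(1)$, so a.s.\ $\textrm{supp}\,K_{s,t}(1)\subset\{e^{iW^{+}_{s,t}},e^{-iW^{+}_{s,t}}\}$; moreover, for fixed $t$, $\mathbb P(W^{+}_{s,t}=0)=0$ since Brownian motion on $[s,t]$ a.s.\ does not attain its minimum at $t$, so on $\{s<t<\rho_s\}$ a.s.\ $0<W^{+}_{s,t}<l$, whence $e^{iW^{+}_{s,t}}\in\mathscr C^{+}$ and $e^{-iW^{+}_{s,t}}\in\mathscr C^{-}$; with $U^{+}_{s,t}=K_{s,t}(1,\mathscr C^{+})$ this is the asserted form of $K_{s,t}(1)$, and similarly for $K_{s,t}(e^{il})$. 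For $z\notin\{1,e^{il}\}$, Lemma~\ref{rz}(i) gives $K_{s,t}(z)=\delta_{ze^{i\epsilon(z)W_{s,t}}}$ while $t\le\tau_s(z)$; since $ze^{i\epsilon(z)W_{s,\cdot}}$ stays in the open arc between the two vertices up to $\tau_s(z)$, one gets $W^{+}_{s,\tau_s(z)}=0$ on $\{ze^{i\epsilon(z)W_{s,\tau_s(z)}}=1\}$ and $W^{-}_{s,\tau_s(z)}=0$ on the complementary event, so $K_{s,\tau_s(z)}(1)=\delta_1$, resp.\ $K_{s,\tau_s(z)}(e^{il})=\delta_{e^{il}}$; applying the flow property at the stopping time $\tau_s(z)$ (legitimate by continuity of $t\mapsto K_{s,t}(z)$ and approximation of $\tau_s(z)$ from above) then yields, for $t\in(\tau_s(z),\rho_s]$, $K_{s,t}(z)=K_{\tau_s(z),t}(1)=K_{s,t}(1)$ on the first event and $K_{s,t}(z)=K_{s,t}(e^{il})$ on the second, which is the displayed formula.

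\textbf{(b) The conditional law of $U^{\pm}_{s,t}$.} Taking $\sigma(W)$-conditional expectations in the formula of (a) and using $E[K_{s,t}(1)\mid\sigma(W)]=K^{W}_{s,t}(1)$ gives $E[U^{\pm}_{s,t}\mid\sigma(W)]=\frac12$; the real content is independence from $W$ and the existence of a law $m^{\pm}$ not depending on $(s,t)$. I would obtain this by reducing to the classification of solutions of Tanaka's equation \eqref{dd} on $\R$. Near the vertex $1$, in the angular coordinate $\theta=\arg$ one has $\epsilon(z)=\textrm{sgn}(\theta(z))$, so testing \eqref{jhk} with functions $f\in C^2(\mathscr C)$ that near $1$ are smooth functions of $\theta$ shows that the $\theta$-image of $(K_{s,u})_{u\le\rho_s}$, restricted to a neighbourhood of $1$, solves \eqref{dd} around $0$ driven by $W$; similarly, in the coordinate $\psi=\arg-l$ near $e^{il}$ one has $\epsilon(z)=-\textrm{sgn}(\psi(z))$, so the $\psi$-image of $(K_{s,u})_{u\le\rho_s}$ near $e^{il}$ solves \eqref{dd} around $0$ driven by $-W$. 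Running the argument of \cite{MR2235172} that classifies the laws of solutions of \eqref{dd}, one concludes that $U^{+}_{s,t}=K_{s,t}(1,\mathscr C^{+})$, resp.\ $U^{-}_{s,t}=K_{s,t}(e^{il},\mathscr C^{-})$, is, conditionally on $\{s<t<\rho_s\}$, independent of $W$ with a fixed law $m^{+}$, resp.\ $m^{-}$, of mean $\frac12$ by the first sentence here. This is the main obstacle: the classification of \cite{MR2235172} is stated for solutions on all of $\R\times\R$, and it must be transported to the present situation, where the flow is ``Tanaka'' only near a vertex and only up to the exit time $\rho_s$; the key point making this possible is that $U^{\pm}_{s,t}$, for $t<\rho_s$, is a functional of the flow inside that space-time region, so the relevant portion of the \cite{MR2235172} analysis goes through unchanged.

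\textbf{(c) Joint independence (part (ii)).} Conditionally on $W$, the kernel $K_{s,u}(1)$ differs from $\delta_1$ only while $u$ lies in an excursion interval of $r\mapsto W_r$ strictly below $\inf_{[s,r]}W$, and $K_{s,u}(e^{il})$ differs from $\delta_{e^{il}}$ only on the excursion intervals strictly above $\sup_{[s,r]}W$; these two random families of intervals are disjoint, so the locality recorded in (b) forces $U^{+}_{s,t}$ and $U^{-}_{s,t}$ to be conditionally independent given $W$. Combined with (b), this gives $E[f(U^{+}_{s,t})g(U^{-}_{s,t})h(W)]=\big(\int f\,dm^{+}\big)\big(\int g\,dm^{-}\big)E[h(W)]$ for bounded measurable $f,g,h$, i.e.\ $U^{+}_{s,t}$, $U^{-}_{s,t}$ and $W$ are independent conditionally on $\{\rho_s>t\}$, which is (ii).
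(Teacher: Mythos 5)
Your part (a) is essentially sound and lines up with the paper, modulo a change of route: the paper goes through the coalescing flow $\varphi^c$ and the filtering identities \eqref{h},\eqref{label} to get the explicit form, whereas you argue directly from $E[K_{s,t}(z)\mid\sigma(W)]=K^W_{s,t}(z)$ plus the flow property at the stopping time $\tau_s(z)$; both are fine, and your version is arguably cleaner. The substantive problems are in (b) and (c).

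In (b) you name the real difficulty yourself --- transporting the Le Jan--Raimond classification from $\R$ to the present setting --- and then simply assert that ``the relevant portion of the \cite{MR2235172} analysis goes through unchanged.'' That assertion is exactly the content of the paper's proof, and it is not automatic. What is needed is: (1) the strong Markov property for the $\mathcal{P}(\mathscr C)$-valued process $K_{\cdot}(z)$ (Lemma~\ref{r}); (2) the existence of an $\mathcal F^W$-progressive modification of $t\mapsto E[f(U^+_t)\mid\sigma(W)]$ that is constant on excursions of $W^+$ and equals its value at $L$, the last zero of $W^+$ before $\rho^+$ (Lemma~\ref{uhb}); (3) the identity $E[X_{\rho^+}\mid\mathcal F_{L-}]=E[f(U^+_{\rho^+})]$ (Lemma~\ref{hub}), which uses the nontrivial fact $\mathcal F_{L+}=\mathcal F_{L-}$ at the honest time $L$; and (4) an induction over the successive upcrossings of $W^+$ at level $2^{-n}$ to show that the laws of the $U$'s are the same along these times, independent of $W$, with a weak limit $m^+$ that does not depend on $t$ (Lemmas~\ref{bhu},\ref{buh}). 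Until you have re-established (or at least cited specific statements doing the work of) these lemmas in the circle setting, part (i) is not proved; ``the flow is Tanaka near a vertex'' is a heuristic, not an argument, because several of these steps (notably the last-zero / strong Markov interplay around $\rho^+$) depend on the global structure of $W$ on $[0,\rho^+]$ rather than a local chart around a vertex.

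In (c), ``the excursion intervals are disjoint, so locality forces conditional independence given $W$'' is not a proof; conditional independence given $W$ has to be extracted, not declared, and disjointness of excursion supports by itself does not do it because both $U^+_{0,t}$ and $U^-_{0,t}$ are functionals of the same flow on $[0,t]$. The paper's actual mechanism is: pick an intermediate time $s$ with $g^-_t<s<g^+_t$ and $s<\rho_0$; then $U^-_{0,t}=U^-_{0,s}$ (same excursion of $W^-$) and $U^+_{0,t}=U^+_{s,t}$ (a coalescence argument), and $U^+_{s,t}$ is measurable with respect to the flow increments over $[s,\infty)$ while $U^-_{0,s}$ is measurable with respect to those over $(-\infty,s]$, so the independence of increments gives the factorization. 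Finally one removes the auxiliary $s$ by letting $s$ range over dyadics and using that $f(g^-_t,g^+_t)1_{\{g^-_t<g^+_t\}}$ is $\sigma(W)$-measurable. This careful conditioning on a splitting time is the whole content of part (ii); your proposal leaves it as a slogan.
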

\noindent The proof of (i) essentially follows \cite{MR2235172} and will be deduced after establishing the lemmas \ref{r},\ref{uhb},\ref{hub},\ref{bhu} and \ref{buh} below.\\
For all $-\infty\leq s\leq t\leq +\infty$, define $\mathcal F^{K}_{s,t}=\sigma(K_{u,v}, s\leq u\leq v\leq t)$ and recall the definition of $\mathcal F^{W}_{s,t}$ from (\ref{jij}). When $s=0$, we denote $K_{0,t}, \varphi^c_{0,t}, \mathcal F^{K}_{0,t}, \mathcal F^{W}_{0,t}, U^{\pm}_{0,t}$ simply by $K_{t}, \varphi^c_{t}, \mathcal F^{K}_{t}, \mathcal F^{W}_{t}, U^{\pm}_t$. We will always consider the usual augmentations of these $\sigma$-fields which include all $\mathbb P$-negligible sets and are right-continuous. For each each $z\in\mathscr C$, recall that $t\longmapsto K_{t}(z)$ is continuous from $[0,+\infty[$ into $\mathcal P(\mathscr C)$. Denote by $\mathbb P_z$ the law of $K_{\cdot}(z)$ which is a probability measure on $C(\R_+,\mathcal P(\mathscr C))$, then since $K_{\cdot}(z)$ is a Feller process (see Lemma 2.2 \cite{MR2060298}) the following strong Markov property holds
\begin{lemma}\label{r}
Let $z_1, z_2\in \mathscr C$ and $T$ be a finite $(\mathcal F^{K}_{t})_{t\geq0}$-stopping time. On $\{K_{T}(z_1)=\delta_{z_2}\}$, the law of  $K_{T+\cdot}(z_1)$ knowing $\mathcal F^{K}_{T}$ is given by $\mathbb P_{z_2}$.
\end{lemma}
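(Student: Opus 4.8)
The plan is to read the statement off from the strong Markov property of the Feller process $K_\cdot(z_1)=(K_{0,t}(z_1))_{t\ge0}$ on $\mathcal P(\mathscr C)$, which is a Feller process by Lemma 2.2 of \cite{MR2060298}. There is no essential difficulty here; the only point that deserves a word of justification is that this process is Markov, hence strong Markov, with respect to the full flow filtration $(\mathcal F^K_t)_{t\ge0}$ and not merely with respect to its own natural filtration.

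First I would record the Markov property of $K_\cdot(z_1)$ relative to $(\mathcal F^K_t)_{t\ge0}$. Fix $t\ge0$, $s\ge0$ and $f$ bounded continuous on $\mathcal P(\mathscr C)$. By the flow property, $K_{0,t+s}(z_1)=K_{0,t}K_{t,t+s}(z_1)=\int_{\mathscr C}K_{t,t+s}(y)\,K_{0,t}(z_1,dy)$ is a measurable function of the pair $\bigl(K_{0,t}(z_1),(K_{t,v})_{t\le v\le t+s}\bigr)$, in which the first entry is $\mathcal F^K_{0,t}$-measurable while the second is independent of $\mathcal F^K_{0,t}$ and has a law depending only on $s$, by the independence and stationarity of the increments of $K$. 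Freezing the $\mathcal F^K_{0,t}$-measurable entry then yields $E[f(K_{0,t+s}(z_1))\mid\mathcal F^K_{0,t}]=\psi_s(K_{0,t}(z_1))$ for a measurable map $\psi_s:\mathcal P(\mathscr C)\to\R$ that does not depend on $z_1$; this exhibits $K_\cdot(z_1)$ as a time-homogeneous Markov process relative to $(\mathcal F^K_t)_{t\ge0}$ whose semigroup does not depend on $z_1$, and, specialising to $K_{0,t}(z_1)=\delta_z$ (so that $\psi_s(\delta_z)=E[f(K_{0,s}(z))]$), it shows that $\mathbb P_z$ is exactly the law of this Markov process started from the point $\delta_z\in\mathcal P(\mathscr C)$. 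Since $(\mathcal F^K_t)_{t\ge0}$ is right-continuous and complete and the semigroup is Feller, $K_\cdot(z_1)$ is strong Markov with respect to $(\mathcal F^K_t)_{t\ge0}$.

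It then remains to apply this at the stopping time $T$. Since $t\mapsto K_t(z_1)$ is continuous and $(\mathcal F^K_t)$-adapted and $T$ is a finite $(\mathcal F^K_t)$-stopping time, $K_T(z_1)$ is $\mathcal F^K_T$-measurable, so $\{K_T(z_1)=\delta_{z_2}\}\in\mathcal F^K_T$. The strong Markov property at $T$ gives that, conditionally on $\mathcal F^K_T$, the process $K_{T+\cdot}(z_1)$ has the law of the Markov process started from $K_T(z_1)$; on the event $\{K_T(z_1)=\delta_{z_2}\}$ this starting point equals $\delta_{z_2}$, and by the identification above the corresponding law is $\mathbb P_{z_2}$, which is the assertion of the lemma.
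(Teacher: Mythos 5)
Your argument is correct and is exactly the approach the paper intends: the paper states the lemma as an immediate consequence of the Feller property of the one-point motion $K_\cdot(z)$ on $\mathcal P(\mathscr C)$ (Lemma~2.2 of \cite{MR2060298}), and you fill in the same reasoning, with the useful extra care of checking that the Markov property holds relative to the full flow filtration $(\mathcal F^K_t)$ rather than only the natural filtration of $K_\cdot(z_1)$, and of identifying $\mathbb P_{z_2}$ as the law of the Feller process started from $\delta_{z_2}$.
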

\noindent Let
$$\rho^+=\inf\{r\geq 0 : W^+_{r}=l\} \quad \textrm{and} \quad L=\sup\{r\in[0,\rho^+] : W^+_{r}=0\}.$$
Thanks to (\ref{label}), on the event $\{0\leq t\leq \rho^+\}$, a.s. $$E[\delta_{\varphi^{c}_{t}(1)}|\sigma(W)]=\frac{1}{2}(e^{iW^+_t}+e^{-iW^+_t}).$$
By the continuity of $\varphi^c_{\cdot}(1)$, this shows that a.s.
\begin{equation}\label{gt}
\forall t\in[0,\rho^+],\quad \varphi^c_{t}(1)\in\{e^{iW^+_t},e^{-iW^+_t}\}.
\end{equation}
Let $h\in C(\mathscr C)$ such that $\forall x\in[-l,l]$, $h(e^{ix})=|x|$.
Using (\ref{h}), the fact that $\sigma(W)\subset\sigma(K)$ and the continuity of $t\longmapsto K_{t}(1)$, we have a.s. $\forall g\in C_0(\R), \forall t\in [0,\rho^+]$,
$$K_{t}(g\circ h)(1)=g(W^+_t).$$
Thus a.s. $\forall t\in[0,\rho^+],\ K_{t}h(1)=W^+_t$ and $\rho^+$ can be expressed as
\begin{equation}\label{63}
\rho^+=\inf\{t\geq 0 : K_{t}h(1)=l\}.
\end{equation}
Define the $\sigma$-fields:
$$\mathcal{F}_{L-}=\sigma(X_{L}, X\ \textrm{is a bounded}\ \  \mathcal{F}^{W}_{0,\cdot}-\textrm{previsible process}),$$
$$\mathcal{F}_{L+}=\sigma(X_{L}, X\ \textrm{is a bounded}\ \  \mathcal{F}^{W}_{0,\cdot}-\textrm{progressive process}).$$
By Lemma 4.11 in \cite{MR2235172}, we have $\mathcal{F}_{L+}=\mathcal{F}_{L-}$. Let $f : \R\longrightarrow\R$ be a bounded continuous function and set $$X_t=E[f(U^+_{t})|\sigma(W)]1_{\{0\leq t\leq \rho^+\}}.$$
\noindent By (\ref{h}), the process $U^+$ is constant on the excursions of $W^+$ out of $0$ before $\rho^+$.
\begin{lemma}\label{uhb}
There exists an $\mathcal F^W$-progressive version of $X$ denoted $Y$ that is constant on the excursions of $W^+$ out of $0$ before $\rho^+$ and satisfies $Y_L=Y_{\rho^+}$ a.s.
\end{lemma}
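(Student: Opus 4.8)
The plan is to construct $Y$ in three moves: replace the conditioning by all of $W$ with conditioning on the current $\sigma$-field, produce a progressive version by optional projection, and then force constancy on excursions by comparison with the given form of $X$; the endpoint $L$ is treated separately with $\mathcal F_{L-}=\mathcal F_{L+}$.

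First I would check that on $\{t\le\rho^+\}$ one has $X_t=E[f(U^+_t)\mid\mathcal F^W_{0,t}]$: $U^+_t$ is $\mathcal F^K_{0,t}$-measurable, and since $\sigma(W)\subset\sigma(K)$ the $\sigma$-field generated by the increments of $W$ outside $[0,t]$ is independent of $\mathcal F^K_{0,t}$ (independence of increments of $K$), so conditioning on $\sigma(W)$ reduces to conditioning on $\mathcal F^W_{0,t}$; and $\{t\le\rho^+\}\in\mathcal F^W_{0,t}$. Since $t\mapsto K_t(1)$ is continuous (hence $\mathcal F^K$-progressive), the process $(t,\omega)\mapsto f(U^+_t)\,1_{\{t\le\rho^+\}}$ is bounded and jointly measurable, so its optional projection $Y^0$ with respect to the right-continuous filtration $(\mathcal F^W_{0,t})_t$ is well defined, $\mathcal F^W$-progressive, and satisfies $Y^0_t=X_t$ a.s.\ for each fixed $t$.

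The heart of the matter is that $Y^0$ is, off a single null set, constant on the rationals of every excursion interval of $W^+$ contained in $[0,\rho^+]$. Indeed, for rationals $q_1<q_2$ with $W^+>0$ on $[q_1,q_2]$ and $q_2\le\rho^+$ one has $U^+_{q_1}=U^+_{q_2}$ (constancy of $U^+$ on excursions, recalled just before the statement), and since that event belongs to $\sigma(W)$, $X_{q_1}=E[f(U^+_{q_1})\mid\sigma(W)]=E[f(U^+_{q_2})\mid\sigma(W)]=X_{q_2}$ on it; hence $Y^0_{q_1}=Y^0_{q_2}$ a.s.\ there, and intersecting over the countably many pairs gives the claim (likewise $X_t=X_q$ a.s.\ whenever a fixed time $t$ and a rational $q$ lie in the same excursion of $[0,\rho^+]$). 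Now set, for $t$ with $W^+_t>0$, $Y_t:=\lim_{q\downarrow g_t,\,q\in\mathbb Q}Y^0_q$, where $g_t=\sup\{s\le t:W^+_s=0\}$ is the start of the excursion straddling $t$ (the limit exists a.s.\ since $Y^0$ is a.s.\ constant on the rationals of that excursion), and $Y_t:=0$ when $W^+_t=0$ or $t>\rho^+$ (harmless since $\{W^+_t=0\}$ is $\mathbb P$-null for fixed $t$); right-continuity of the filtration makes this limit $\mathcal F^W_{0,t}$-measurable, and the standard regularization keeps $Y$ progressive. By the comparison above, $Y$ is an $\mathcal F^W$-progressive version of $X$, and it is constant on the excursion intervals of $W^+$ inside $[0,\rho^+]$.

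It remains to prove $Y_L=Y_{\rho^+}$. Since $W^+_{\rho^+}=l>0$, $\rho^+$ lies in the interior of the excursion interval $(L,d)$ with $d>\rho^+$, so $Y$ is constant on $(L,\rho^+]$; call $c'$ that constant value, so $Y_{\rho^+}=c'$ (this left-continuous value at $\rho^+$ is the right one: beyond $\rho^+$ the process $f(U^+)1_{\{\cdot\le\rho^+\}}$ vanishes, and moreover the bare definition $U^+_t=K_t(1,\mathscr C^+)$ degenerates to $0$ exactly when $W^+_t\in\{0,l\}$). For $Y_L$: $c'=\lim_{q\downarrow L,\,q\in\mathbb Q}Y^0_q$ is $\mathcal F_{L+}$-measurable, hence by $\mathcal F_{L-}=\mathcal F_{L+}$ (Lemma~4.11 of \cite{MR2235172}) also $\mathcal F_{L-}$-measurable, which is exactly what allows $c'$ to be attached at the non-stopping time $L$ inside an $\mathcal F^W$-previsible (hence progressive) process, giving $Y_L=c'=Y_{\rho^+}$ a.s. The main obstacle is precisely this endpoint bookkeeping: $L$ is not a stopping time and the value one must place at $L$ (equivalently at $\rho^+$) is the common value of the straddling-excursion martingale rather than any naive limit, and $\mathcal F_{L-}=\mathcal F_{L+}$ is imported to reconcile that with progressivity; the constancy on excursions itself is soft, following from the comparison above together with the already-known constancy of $U^+$ on excursions.
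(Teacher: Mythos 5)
Your route through the optional projection is genuinely different from the paper's: the paper builds an explicit progressive approximation $X^n$ from dyadic excursion-level stopping times $S_{k,n}=\inf\{t\geq T_{k-1,n}:W^+_t=2^{-n}\}$, $T_{k,n}=\inf\{t\geq S_{k,n}:W^+_t=0\}$, takes $\tilde X=\limsup_n X^n$, and then right-regularizes. Your replacement of this by the optional projection $Y^0$ of $f(U^+_\cdot)1_{\{\cdot\le\rho^+\}}$ followed by the observation that $Y^0$ is a.s.\ constant on the rationals of each excursion is clean, and the first four of your steps are sound.

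The gap is at the endpoint identity $Y_L=Y_{\rho^+}$. With your definition — $Y_t:=\lim_{q\downarrow g_t}Y^0_q$ when $W^+_t>0$ and $t\le\rho^+$, and $Y_t:=0$ when $W^+_t=0$ or $t>\rho^+$ — you get $Y_L=0$ outright, because $W^+_L=0$, while $Y_{\rho^+}=c'$, which is not $0$ in general. The attempted repair does not work: (a) the statement ``$c'$ is $\mathcal F_{L+}$-measurable'' is precisely what having a progressive version with $Y_L=c'$ is supposed to \emph{deliver}, so invoking it here is circular; and (b) the equality $\mathcal F_{L-}=\mathcal F_{L+}$ is a tool used \emph{after} this lemma (in Lemma~\ref{hub}, to pass from $\mathcal F_{L+}$-measurability of $X_{\rho^+}$ to $E[X_{\rho^+}\mid\mathcal F_{L-}]$), not a device that lets you overwrite a progressive process at a single non-stopping random time: the graph $[[L]]$ is the end of an optional set and is not progressively measurable, so the modified process is not obviously progressive. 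The clean fix stays inside your framework but changes where you regularize: take the right limit of $Y^0$ at $t$ itself (so that at $L$, which is the start of an excursion, $Y_L$ picks up the value of that excursion), and handle the other endpoint by exploiting that $\rho^+$ \emph{is} a stopping time — e.g.\ drop the $\{t\le\rho^+\}$ cutoff while regularizing, then multiply by $1_{\{t\le\rho^+\}}$ at the very end, which preserves progressivity and forces $Y_{\rho^+}$ to take the same excursion value as $Y_L$. As written, the proposal does not establish $Y_L=Y_{\rho^+}$.
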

\begin{proof}
We closely follow Lemma 4.12 \cite{MR2235172} and correct an error at the end of the proof there.
By induction, for all integers $k$ and $n$, define the sequence of stopping times $S_{k,n}$ and $T_{k,n}$ by the relations: $T_{0,n}=0$ and for $k\geq 1$,
\begin{eqnarray}
S_{k,n}&=&\inf\{t\geq T_{k-1,n}: W^+_t=2^{-n}\},\nonumber\\
T_{k,n}&=&\inf\{t\geq S_{k,n}: W^+_t=0\}.\nonumber\
\end{eqnarray}
In the following $U^+_{k,n}$ will denote $U^+_{S_{k,n}}$. For all $t>0$, on $\{t\in[S_{k,n},T_{k,n}[, t\leq \rho^+\}$, we have $U^+_{t}=U^+_{k,n}$ as. Let $X_{k,n}:=E[f(U^+_{k,n})|W]1_{\{S_{k,n}\leq\rho^+\}}$. Since $\sigma(W_{S_{k,n},u+S_{k,n}}, u\geq0)$ is independent of $\mathcal{F}^K_{S_{k,n}}$, we have $X_{k,n}=E[f(U_{k,n})|\mathcal{F}_{S_{k,n}}^{W}]1_{\{S_{k,n}\leq\rho^+\}}$ which is $\mathcal{F}_{S_{k,n}}^{W}$ measurable. Set $I_{n}=\bigcup_{k\geq 1}[S_{k,n},T_{k,n}[$ and define

$$ X_{t}^{n} = \begin{cases}
       X_{k,n} & \text{if} \ t\in[S_{k,n},T_{k,n}[\ (\textrm{for some}\ k)\ \textrm{and}\ t\leq \rho^+,\\
        f(0)& \text{if} \ t\in I_{n}^c\cap[0,\rho^+],\\
         0& \text{if} \ t>\rho^+.\\
       \end{cases}$$
Then $X^{n}$ is $\mathcal{F}^{W}$-progressive. For all $t\geq 0$, set $\tilde X_{t}=\limsup_{n\rightarrow\infty}X_{t}^{n}$, then $\tilde X$ is $\mathcal{F}^{W}$-progressive and for all $t\geq0$, $\tilde X_{t}=X_{t}$ a.s. Indeed, fix $t>0$ and on the event $\{\rho^+>t\}$, choose $k_0$ and $n_0$ such that $t\in[S_{k_0,n_0},T_{k_0,n_0}[$, then $X_{t}^{n_0}=X_{k_0,n_0}$. For all $n\geq n_0$, there exists an integer $l_n$ such that $t\in[S_{l_n,n},T_{l_n,n}[$. Thus $X_{t}^{n}=X_{l_n,n}=X_{k_0,n_0}$ since $S_{k_0,n_0}$ and $S_{l_n,n}$ belong to the same excursion interval of $W^+$ containing also $t$. Now set $Y_0=f(0)$ and $Y_t=\limsup_{n\rightarrow\infty}\tilde X_{t+\frac{1}{n}}$ for all $t>0$. Then $Y$ is a modification of $X$ which is $\mathcal{F}^{W}$-progressive and constant on the excursions of $W^+$ out of $0$ before $\rho^+$. Moreover $Y_L=Y_{\rho^+}$ a.s.
\end{proof}
We take for $X$ this $\mathcal{F}^{W}$-progressive  version. Then $X_{\rho^+}=E[f(U^+_{\rho^+})|\sigma(W)]$ is $\mathcal{F}_{L+}$ measurable.
\begin{lemma}\label{hub}
$E[X_{\rho^+}|\mathcal{F}_{L-}]=E[f(U^+_{\rho^+})]$.
\end{lemma}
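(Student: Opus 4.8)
Since $\mathcal F_{L-}$ is generated by bounded $\mathcal F^W$-previsible processes evaluated at $L$, it is contained in $\sigma(W)$; as $X_{\rho^+}=E[f(U^+_{\rho^+})\mid\sigma(W)]$, the tower property gives $E[X_{\rho^+}\mid\mathcal F_{L-}]=E[f(U^+_{\rho^+})\mid\mathcal F_{L-}]$, so the lemma amounts to showing that $f(U^+_{\rho^+})$ is independent of $\mathcal F_{L-}$; equivalently, since $X_{\rho^+}$ is $\mathcal F_{L+}=\mathcal F_{L-}$-measurable, that $X_{\rho^+}$ equals a.s. the constant $E[f(U^+_{\rho^+})]$. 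I would argue through the stopping times $S_{k,n},T_{k,n}$ of the proof of Lemma \ref{uhb}. For each $n$ there is a.s. a unique index $\kappa(n)$ with $S_{\kappa(n),n}<\rho^+<T_{\kappa(n),n}$; the excursion interval of $W^+$ straddling $\rho^+$ has left endpoint $L$, and, using continuity of $W^+$ at $L$ together with the fact that excursions of $W^+$ accumulate at $L$ with heights tending to $0$, one checks $S_{\kappa(n),n}\downarrow L$ and $T_{\kappa(n)-1,n}\uparrow L$ a.s. Because $U^+$ is constant on the excursions of $W^+$ out of $0$ before $\rho^+$ (the paragraph before Lemma \ref{uhb}), we have $U^+_{\rho^+}=U^+_{S_{\kappa(n),n}}$ for all $n$, and unwinding the construction of the progressive version $Y$ of $X$ together with $Y_L=Y_{\rho^+}$ yields $X_{\rho^+}=\limsup_n X_{\kappa(n),n}$, where $X_{k,n}=E[f(U^+_{S_{k,n}})\mid\mathcal F^W_{S_{k,n}}]1_{\{S_{k,n}\le\rho^+\}}$.

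Next I would exploit the strong Markov property at $T_{k-1,n}$. On $\{T_{k-1,n}\le\rho^+\}$ one has $W^+_{T_{k-1,n}}=0$, hence $K_{0,T_{k-1,n}}(1)=\delta_1$ by the description of $K_{0,t}(1)$ on $[0,\rho^+]$ established before Lemma \ref{r}, so by the strong Markov property of the flow of kernels $K$ at $T_{k-1,n}$ (see \cite{MR2060298}; cf. Lemma \ref{r}) the restarted pair $(K_{T_{k-1,n},T_{k-1,n}+\cdot},\,W_{T_{k-1,n},T_{k-1,n}+\cdot})$ is independent of $\mathcal F^K_{T_{k-1,n}}$ and has the law of $(K,W)$. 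Writing $\bar W^+_u=W^+_{T_{k-1,n}+u}$, which is exactly the reflected process built from the restarted noise, the flow property at $T_{k-1,n}$ and $K_{0,T_{k-1,n}}(1)=\delta_1$ give $U^+_{S_{k,n}}=K_{T_{k-1,n},S_{k,n}}(1,\mathscr C^+)$, while $S_{k,n}-T_{k-1,n}=\inf\{u:\bar W^+_u=2^{-n}\}$ and, on $\{T_{k-1,n}<\rho^+\}$, $1_{\{S_{k,n}\le\rho^+\}}$ are functionals of the restarted data, and $\mathcal F^W_{S_{k,n}}=\mathcal F^W_{T_{k-1,n}}\vee\sigma\big(W_{T_{k-1,n},T_{k-1,n}+u}:0\le u\le S_{k,n}-T_{k-1,n}\big)$. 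Since $\mathcal F^W_{T_{k-1,n}}\subset\mathcal F^K_{T_{k-1,n}}$ is independent of the restarted data, the conditioning on $\mathcal F^W_{T_{k-1,n}}$ drops out, and one obtains $X_{k,n}=\psi_n\big(W_{T_{k-1,n},T_{k-1,n}+\cdot}\big|_{[0,\,S_{k,n}-T_{k-1,n}]}\big)$ for a measurable map $\psi_n$ depending only on the law of $(K,W)$, the same for every $k$, in particular for $k=\kappa(n)$.

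Finally I would let $n\to\infty$. Because $T_{\kappa(n)-1,n}\uparrow L$ and $S_{\kappa(n),n}\downarrow L$, the argument of $\psi_n$ above is a path of $W$ on an interval shrinking to $\{L\}$, so $X_{\rho^+}=\limsup_n X_{\kappa(n),n}$ is measurable with respect to the germ $\sigma$-field of $W$ at $L$. Invoking $\mathcal F_{L+}=\mathcal F_{L-}$ (Lemma 4.11 of \cite{MR2235172}), so that the germ of $W$ strictly after $L$ carries no information beyond $\mathcal F_{L-}$, together with the last-exit decomposition of $W^+$ at $L$ (the post-$L$ excursion of $W^+$ reaching $l$ being independent of $\mathcal F_{L-}$) and the resulting $0$--$1$ behaviour at $L$, one concludes that $X_{\rho^+}$ is degenerate, i.e. $X_{\rho^+}=E[X_{\rho^+}]=E[f(U^+_{\rho^+})]$ a.s. I expect this last step — transferring the independence obtained at the approximating stopping times $T_{k-1,n}$ down to the last-exit time $L$, that is, the excursion-theoretic passage to the limit — to be the only delicate point; it is carried out exactly as in the corresponding step of \cite{MR2235172}.
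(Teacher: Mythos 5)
Your proposal takes a genuinely different route from the paper's, and its final step has a gap. The paper does not pass through the dyadic sequence $(S_{k,n},T_{k,n})$ at all for this lemma: it uses the characterization (from \cite{MR1780932}) that $\mathcal F_{L-}$ is generated by the events $\{S<L\}$ over all $\mathcal F^W$-stopping times $S$, identifies $\{S<L\}=\{d_S<\rho^+\}$ with $d_S=\inf\{t\ge S: W^+_t=0\}$, and then makes a single application of Lemma~\ref{r} at $H=d_S\wedge\rho^+$: on $\{d_S<\rho^+\}$ one has $K_H(1)=\delta_1$, and via~(\ref{63}) both $\rho^+$ and $U^+_{\rho^+}$ are functionals of the flow restarted at $H$, independent of $\mathcal F^K_H\ni 1_{\{d_S<\rho^+\}}$. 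This yields $E[X_{\rho^+}1_{\{S<L\}}]=E[f(U^+_{\rho^+})]\,\mathbb P(S<L)$ for every stopping time $S$, which is the lemma. Your reformulation — that the claim is equivalent to $X_{\rho^+}$ being a.s.\ constant, since $X_{\rho^+}$ is $\mathcal F_{L+}=\mathcal F_{L-}$-measurable — is correct, and the intermediate step (strong Markov restart at $T_{k-1,n}$ expressing $X_{k,n}$ through the increments of $W$ after $T_{k-1,n}$) is essentially sound.

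The gap is in the passage to the limit. You argue that $X_{\rho^+}=\limsup_n X_{\kappa(n),n}$ is measurable with respect to a \emph{germ} $\sigma$-field of $W$ at $L$ and then invoke a $0$--$1$ statement to get degeneracy. But the germ $\sigma$-field of $W$ at the random last-exit time $L$ is not trivial: it contains, for instance, $L$ itself and $W_L=\inf_{u\le\rho^+}W_u$, both non-degenerate and $\mathcal F_{L-}$-measurable. So germ-measurability alone cannot give constancy; you would need independence of $X_{\rho^+}$ from $\mathcal F_{L-}$, and that is precisely what does not follow from your restart argument. The Markov restart at $T_{k-1,n}$ gives independence of the restarted data from $\mathcal F^K_{T_{k-1,n}}$ for each \emph{fixed} $k$, but the index $\kappa(n)$ (and hence the starting time $T_{\kappa(n)-1,n}$ and the window $[T_{\kappa(n)-1,n},S_{\kappa(n),n}]$) is itself a functional of the pre-$L$ path, i.e.\ of $\mathcal F_{L-}$. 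Plugging a random, $\mathcal F_{L-}$-dependent index into an independence statement indexed by $k$ is exactly the kind of step that a last-exit decomposition is designed to justify, and without carrying it out explicitly the argument is incomplete. The paper avoids this delicacy entirely by working with the generating events $\{S<L\}$, for which the stopping time $d_S$ is an honest $\mathcal F^W$-stopping time not depending on $L$, so the strong Markov property applies directly.
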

\begin{proof}
Let $S$ be an $\mathcal{F}^W$-stopping time and $d_{S}=\inf\{t\geq S: W^+_{t}=0\}$. We have $\{S<L\}=\{d_{S}<\rho^+\}$ (up to some negligible set) and so $\{S<L\}\in\mathcal{F}_{d_{S}}^{W}$. Let $H=d_S\wedge \rho^+$ and $K=\inf\{r\geq0: K_{H+r}h(1)=l\}$, then
$$E[X_{\rho^+}1_{\{d_{S}<\rho^+\}}]=E[f(U^+_{H+K})1_{\{d_{S}<\rho^+, K_{H}(1)=\delta_1\}}].$$
Note that on $\{d_{S}<\rho^+\}$, we have $H+K=\rho^+$ a.s. Applying Lemma \ref{r} at time $H$ and using (\ref{63}), we get
$$E[X_{\rho^+}1_{\{d_{s}<\rho^+\}}]=E[f(U^+_{\rho^+})]E[1_{\{d_{S}<\rho^+, K_{H}(1)=\delta_1\}}]=E[f(U^+_{\rho^+})]\mathbb P(d_{S}<\rho^+).$$
Since the $\sigma$-field $\mathcal{F}_{L-}$ is generated by the events $\{S<L\}$ for all stopping time $S$ (see \cite{MR1780932} page 344), the lemma holds.
\end{proof}
The previous lemma implies that $U^+_{\rho^+}$ is independent of $\sigma(W)$ (Lemma 4.14 \cite{MR2235172}) and the same holds if we replace $\rho^+$ by $\inf\{t\geq 0 : W^+_{t}=a\}$ where $0<a\leq l$. For $n$ such that $2^{-n}<l$, define inductively $T_{0,n}^+=0$ and for $k\geq 1$:
\begin{eqnarray}
S_{k,n}^{+}&=&\inf\{t\geq T_{k-1,n}^{+} : W^+_{t}=2^{-n}\},\nonumber\\
T_{k,n}^{+}&=&\inf\{t\geq S_{k,n}^{+} : W^+_{t}=0\}.\nonumber\
\end{eqnarray}
Set $V_{k,n}^{+}=U_{S_{k,n}^{+}}^{+}$. Then, we have the following
\begin{lemma}\label{bhu}
For all $q\geq 1$, conditionally to $\{S_{q,n}^{+}\leq \rho^+\}$, $V_{1,n}^{+},\cdots, V_{q,n}^{+}, W$ are independent and $V_{1,n}^{+},\cdots, V_{q,n}^{+}$ have the same law (which depends on $n$ but no longer depends on $q$).
\end{lemma}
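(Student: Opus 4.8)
The plan is to prove the lemma by induction on $q$, following the corresponding step in \cite{MR2235172}. The key inputs are the consequence of Lemma \ref{hub}—that $U^+_{\sigma_a}$ is independent of $\sigma(W)$ for $\sigma_a=\inf\{t\ge 0:W^+_t=a\}$ with $0<a\le l$—together with the strong Markov property of the solution $(K,W)$ at a finite stopping time (which follows from the independence of increments by the discretization argument already used for $\varphi^+_{S,S+\cdot}$ and in Proposition \ref{oo}), and the fact recorded before Lemma \ref{uhb} that $U^+$ is constant on the excursions of $W^+$ out of $0$ up to $\rho^+$.

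For the base case $q=1$: since $2^{-n}<l$, the continuous nonnegative path $W^+$ reaches $2^{-n}$ strictly before $l$, so $S^+_{1,n}=\sigma_{2^{-n}}<\rho^+$ almost surely and $\{S^+_{1,n}\le\rho^+\}$ has probability one; moreover $V^+_{1,n}=U^+_{S^+_{1,n}}=U^+_{\sigma_{2^{-n}}}$ is independent of $\sigma(W)$, and I denote its law by $\mu_n$. It is convenient to record the functional identity $V^+_{1,n}=\widehat F(K,W)$, where $\widehat F(k,w):=k_{\sigma(w)}(1,\mathscr C^+)$ and $\sigma(w):=\inf\{t:w^+_t=2^{-n}\}$; since $V^+_{1,n}$ is independent of $\sigma(W)$, the pair $(\widehat F(K,W),W)=(V^+_{1,n},W)$ has independent coordinates, the first with law $\mu_n$.

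Assume now the statement for $q$ and set $A_{q+1}=\{S^+_{q+1,n}\le\rho^+\}$ (which has positive probability). I would first verify the elementary identities $A_{q+1}=\{T^+_{q,n}<\rho^+\}=\{K_{T^+_{q,n}}(1)=\delta_1\}$: indeed $W^+_{T^+_{q,n}}=0$ always, so $K_{T^+_{q,n}}(1)=\delta_1$ exactly on $\{T^+_{q,n}<\rho^+\}$, while $\{S^+_{q+1,n}\le\rho^+\}$ coincides with $\{T^+_{q,n}<\rho^+\}$ because $W^+$ restarts from $0$ at $T^+_{q,n}$ and $2^{-n}<l$. This event lies in $\mathcal F^W_{0,T^+_{q,n}}\subset\mathcal F^K_{T^+_{q,n}}$. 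Writing $\widetilde K:=K_{T^+_{q,n},T^+_{q,n}+\cdot}$ and $\widetilde W:=W_{T^+_{q,n}+\cdot}-W_{T^+_{q,n}}$, the restart of $W^+$ gives $S^+_{q+1,n}=T^+_{q,n}+\sigma(\widetilde W)$, and on $A_{q+1}$—where $K_{0,T^+_{q,n}}(1)=\delta_1$ erases the prefix, so that $K_{T^+_{q,n}+t}(1)=\widetilde K_t(1)$—one gets $V^+_{q+1,n}=\widehat F(\widetilde K,\widetilde W)$. By the strong Markov property of $(K,W)$ at $T^+_{q,n}$, the flow increments over $[T^+_{q,n},\infty)$ together with $\widetilde W$ are independent of $\mathcal F^K_{T^+_{q,n}}$ and distributed as $(K,W)$; conditioning on $A_{q+1}\in\mathcal F^K_{T^+_{q,n}}$ changes neither this law nor this independence. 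Hence, conditionally on $A_{q+1}$, the pair $(V^+_{q+1,n},\widetilde W)=(\widehat F(\widetilde K,\widetilde W),\widetilde W)$ is distributed as $(V^+_{1,n},W)$—so has independent coordinates with first marginal $\mu_n$—and is independent of $\mathcal F^K_{T^+_{q,n}}$. Since $V^+_{1,n},\dots,V^+_{q,n}$ are $\mathcal F^K_{T^+_{q,n}}$-measurable and $W$ is recovered from $\mathcal F^W_{0,T^+_{q,n}}\subset\mathcal F^K_{T^+_{q,n}}$ together with $\widetilde W$, it follows that, conditionally on $A_{q+1}$, $V^+_{q+1,n}\sim\mu_n$ and $V^+_{q+1,n}$ is independent of $(V^+_{1,n},\dots,V^+_{q,n},W)$.

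It then remains to show that, conditionally on $A_{q+1}$, the vector $(V^+_{1,n},\dots,V^+_{q,n},W)$ still has the product law. For this I would write $A_{q+1}=A_q\cap D$, where $A_q=\{S^+_{q,n}\le\rho^+\}\supset A_{q+1}$ and $D$ is the $\sigma(W)$-measurable event ``$W^+$ returns to $0$, before reaching $l$, after time $S^+_{q,n}$''; by the induction hypothesis, under $\mathbb P(\cdot\mid A_q)$ the vector $(V^+_{1,n},\dots,V^+_{q,n})$ is independent of $\sigma(W)$ with each marginal $\mu_n$, and conditioning further on a $\sigma(W)$-event preserves both. Combining the last two paragraphs shows that $V^+_{1,n},\dots,V^+_{q+1,n},W$ are independent under $\mathbb P(\cdot\mid A_{q+1})$ with each $V^+_{i,n}\sim\mu_n$; in particular $\mu_n$ does not depend on $q$, which closes the induction. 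I expect the main difficulty to be organizational: keeping the nested events $A_{q+1}\subset A_q$ straight so that the extra conditioning is harmless (precisely because $D$ is $\sigma(W)$-measurable and the $V^+_{i,n}$ are $\sigma(W)$-independent), and making the restart argument rigorous—verifying that on $A_{q+1}$ the reflected path $W^+$ and the kernel $K(1)$ genuinely restart at $T^+_{q,n}$ and that $\widetilde W$ is a functional of the restarted flow, so that the strong Markov property may be applied to the joint object $(K,W)$.
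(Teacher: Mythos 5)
Your proposal follows the same inductive skeleton as the paper's proof: the base case $q=1$ comes from the consequence of Lemma \ref{hub}, the induction step rests on a restart at $T^+_{q,n}$ using $K_{T^+_{q,n}}(1)=\delta_1$ on the conditioning event, and the change of conditioning event from $A_q$ to $A_{q+1}$ is handled by observing that the discarded set is $\sigma(W)$-measurable so that the induction hypothesis survives. Those are exactly the two ingredients the paper uses, and your bookkeeping of the nested events (including noting $A_{q+1}\subset A_q$, where the paper has a small typo writing $A_{q-1,n}\subset A_{q,n}$) is correct.

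The one place where your route genuinely diverges from the paper's is the justification of the restart. You invoke a \emph{strong Markov property of the full flow} $(K,W)$ at the stopping time $T^+_{q,n}$, i.e.\ that the family of flow increments over $[T^+_{q,n},\infty)$ together with $\widetilde W$ is independent of $\mathcal F^K_{T^+_{q,n}}$ with the same law as $(K,W)$. That is a true statement, but it is not what the paper has at its disposal: the paper only proves Lemma \ref{r}, a strong Markov property for the single $\mathcal P(\mathscr C)$-valued Feller process $t\mapsto K_{0,t}(z)$. The paper's proof deliberately reduces everything to that one process: on $A_{q,n}$, the driving increment $W_{T^+_{q-1,n},\,\cdot+T^+_{q-1,n}}$ is recovered as an $L^2$-limit of functionals of $K_{\cdot+T^+_{q-1,n}}(1)$ (this is identity \eqref{t}, obtained by taking $f_j\to\epsilon$ as in Lemma \ref{rz}(ii) and using $K_{T^+_{q-1,n}}(1)=\delta_1$), and $V^+_{q,n}$ is likewise a functional of that same restarted process. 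Applying Lemma \ref{r} at $T^+_{q-1,n}$ then gives, in one stroke, the joint distributional restart of $(V^+_{q,n},\widetilde W)$ and its independence of $\mathcal F^K_{T^+_{q-1,n}}$ — which is exactly the conclusion you want, but obtained without ever formulating or proving a flow-level strong Markov property. Your version is correct in substance and can certainly be made rigorous (by a discretization along the lines you sketch), but it requires establishing an extra property that the paper sidesteps; the paper's use of identity \eqref{t} to funnel $W$ through $K_{\cdot}(1)$ is the device that keeps the argument within the framework already built, and that device is absent from your write-up. You flag this yourself (``making the restart argument rigorous... so that the strong Markov property may be applied to the joint object $(K,W)$''), so you have identified the right pressure point; you just haven't seen the paper's more economical way of resolving it.
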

\begin{proof}
We prove the result by induction on $q$. For $q=1$, this has been justified. Suppose the result holds for $q-1$ and let $(f_j)$ be an approximation of $\epsilon$ as in the proof of Lemma \ref{rz} (ii). For a fixed $t\geq 0$, in $L^2(\mathbb P)$, we have
$$W_{T_{q-1,n}^{+},t+T_{q-1,n}^{+}}=\lim_{j\rightarrow\infty}\left(K_{t+T_{q-1,n}^{+}}f_j(1)-K_{T_{q-1,n}^{+}}f_j(1)-\frac{1}{2}\int_0^tK_{u+T_{q-1,n}^{+}}f_j''(1)du\right).$$

On $\{S_{q,n}^{+}\leq \rho^+\}$, we have $K_{T_{q-1,n}^{+}}(1)=\delta_1$ and therefore, in $L^2(\mathbb P(.|S_{q,n}^{+}\leq \rho^+))$,
\begin{equation}\label{t}
W_{T_{q-1,n}^{+},t+T_{q-1,n}^{+}}=\lim_{j\rightarrow\infty}\left(K_{t+T_{q-1,n}^{+}}f_j(1)-f_j(1)-\frac{1}{2}\int_0^tK_{u+T_{q-1,n}^{+}}f_j''(1)du\right)
\end{equation}
As $2^{-n}<l$, $\{S_{q,n}^{+}\leq \rho^+\}=\{T_{q-1,n}^{+}\leq \rho^+\}$ a.s. Choose a family $\{g_1,\cdots,g_q,g,h\}$ of bounded continuous functions on $\R$. For any $A\in\mathcal A$, we will use the notation $E_A$ to denote the expectation under $\mathbb P(\cdot|A)$. Set $A_{q,n}=\{S_{q,n}^{+}\leq \rho^+\}$, then using (\ref{t}) and Lemma \ref{r} at time $T_{q-1,n}^{+}$, we get
$$E_{A_{q,n}}\left[\prod_{i=1}^{q} g_i(U_{S_{i,n}^{+}}^{+})g(W_{t\wedge T_{q-1,n}^{+}})h(W_{T_{q-1,n}^{+},t+T_{q-1,n}^{+}})\right]$$
$$=E_{A_{q,n}}\left[\prod_{i=1}^{q-1} g_i(U_{S_{i,n}^{+}}^{+})g(W_{t\wedge T_{q-1,n}^{+}})\right] E\left[h(W_t)\right]E\left[g_q(U^+_{S_{1,n}^{+}})\right].$$
Since $A_{q-1,n}\subset A_{q,n}$, we have by the induction hypothesis
$$E_{A_{q,n}}\left[\prod_{i=1}^{q-1} g_i(U_{S_{i,n}^{+}}^{+})g(W_{t\wedge T_{q-1,n}^{+}})\right]=E_{A_{q-1,n}}\left[\prod_{i=1}^{q-1} g_i(U_{S_{i,n}^{+}}^{+})\right]E_{A_{q,n}}\left[g(W_{t\wedge T_{q-1,n}^{+}})\right].$$
In conclusion
$$E_{A_{q,n}}\left[\prod_{i=1}^{q} g_i(U_{S_{i,n}^{+}}^{+})g(W_{t\wedge T_{q-1,n}^{+}})h(W_{T_{q-1,n}^{+},t+T_{q-1,n}^{+}})\right]$$
$$=E_{A_{q-1,n}}\left[\prod_{i=1}^{q-1} g_i(U_{S_{i,n}^{+}}^{+})\right] E_{A_{q,n}}\left[g(W_{t\wedge T_{q-1,n}^{+}})h(W_{T_{q-1,n}^{+},t+T_{q-1,n}^{+}})\right]E\left[g_q(U^+_{S_{1,n}^{+}})\right].$$
The last identity remains satisfied if we replace $g(W_{t\wedge T_{q-1,n}^{+}})h(W_{T_{q-1,n}^{+},t+T_{q-1,n}^{+}})$ by a finite product\\
$\prod_{i=1}^{k}g^i(W_{t_i\wedge T_{q-1,n}^{+}})h^i(W_{T_{q-1,n}^{+},t_i+T_{q-1,n}^{+}})$. As a result, for all bounded continuous $g: C(\R_+,\R)\rightarrow\R$,
$$E_{A_{q,n}}\left[\prod_{i=1}^{q} g_i(U_{S_{i,n}^{+}}^{+})g(W)\right]=E_{A_{q-1,n}}\left[\prod_{i=1}^{q-1} g_i(U_{S_{i,n}^{+}}^{+})\right] E_{A_{q,n}}\left[g(W)\right]E\left[g_q(U^+_{S_{1,n}^{+}})\right].$$
Iterating this relation, yields
$$E_{A_{q,n}}\left[\prod_{i=1}^{q} g_i(U_{S_{i,n}^{+}}^{+})g(W)\right]=\prod_{i=1}^qE\left[g_i(U^+_{S_{1,n}^{+}})\right] E_{A_{q,n}}\left[g(W)\right].$$
In particular, for all $i\in[1,q]$,
$$E_{A_{q,n}}\left[g_i(U_{S_{i,n}^{+}}^{+})\right]=E\left[g_i(U^+_{S_{1,n}^{+}})\right].$$
This completes the proof.
\end{proof}
Let $m_n^{+}$ be the law of $V_{1,n}^{+}$ and $m^+$ be the law of $U_1^+$ under $\mathbb P(.|\rho^+>1)$. Then, we have the
\begin{lemma}\label{buh}
The sequence $(m_n^{+})_{n\geq1}$ converges weakly towards $m^+$. For all $t>0$, under $\mathbb P(\cdot|\rho^+>t)$, $U^+_t$ and $W$ are independent and the law of $U^+_t$ is given by $m^+$.
\end{lemma}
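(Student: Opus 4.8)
The plan is to derive the lemma from the conditional independence already established in Lemma \ref{bhu}, after localising the value of $U^+_t$ inside a single excursion of $W^+$, and then passing to the limit by dominated convergence. Fix $t>0$, and for $n$ with $2^{-n}<l$ set $I_n=\bigcup_{k\ge 1}[S^+_{k,n},T^+_{k,n})$. Recall (as in the proof of Lemma \ref{uhb}) that $U^+$ is constant on each excursion interval of $W^+$ out of $0$ lying before $\rho^+$; since $S^+_{k,n}$ and any $t\in[S^+_{k,n},T^+_{k,n})$ belong to the same such interval, it follows that $U^+_t=V^+_{k,n}$ on the event $\{t\in[S^+_{k,n},T^+_{k,n})\}\cap\{\rho^+>t\}$.

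The first thing to check is that, a.s.\ on $\{\rho^+>t\}$, one has $t\in I_n$ for all $n$ large enough. A.s.\ the point $t$ lies in the interior of the excursion interval $(g,d)$ of $W^+$ out of $0$ that straddles it, so $W^+_t>0$; as soon as $2^{-n}<W^+_t$, continuity of $W^+$ forces this excursion to up-cross the level $2^{-n}$ at some time of $(g,t)$, and that time is necessarily one of the $S^+_{k,n}$ (namely for the $k$ with $T^+_{k-1,n}\le g$, since $W^+<2^{-n}$ throughout $[T^+_{k-1,n},g]$, the intervening excursions all being too low), while the matching $T^+_{k,n}$, being the next zero of $W^+$, is $d>t$. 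Hence $t\in[S^+_{k,n},T^+_{k,n})\subset I_n$, so that $1_{\{t\in I_n\}}\uparrow 1$ a.s.\ on $\{\rho^+>t\}$ and $1_{\{t\in I_n\}\cap\{\rho^+>t\}}\to 1_{\{\rho^+>t\}}$ a.s.

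For the main step, observe that $\{t\in[S^+_{k,n},T^+_{k,n})\}\cap\{\rho^+>t\}$ is $\sigma(W)$-measurable and, since $S^+_{k,n}\le t<\rho^+$ on it, is contained in $A_{k,n}=\{S^+_{k,n}\le\rho^+\}$. By Lemma \ref{bhu}, conditionally on $A_{k,n}$ the variable $V^+_{k,n}$ is independent of $W$ with law $m^+_n$, whence for every bounded continuous $\psi$ and every bounded $\sigma(W)$-measurable $F$,
$$E\!\left[\psi(V^+_{k,n})\,F\,1_{\{t\in[S^+_{k,n},T^+_{k,n})\}\cap\{\rho^+>t\}}\right]=\Big(\int\psi\,dm^+_n\Big)E\!\left[F\,1_{\{t\in[S^+_{k,n},T^+_{k,n})\}\cap\{\rho^+>t\}}\right].$$
Summing over $k\ge 1$ and using $U^+_t=V^+_{k,n}$ on those events yields
$$E\!\left[\psi(U^+_t)\,F\,1_{\{t\in I_n\}\cap\{\rho^+>t\}}\right]=\Big(\int\psi\,dm^+_n\Big)E\!\left[F\,1_{\{t\in I_n\}\cap\{\rho^+>t\}}\right].$$

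Finally I would let $n\to\infty$. Taking $F\equiv 1$ and dividing by $\mathbb P(\{t\in I_n\}\cap\{\rho^+>t\})$ (positive for $n$ large), dominated convergence gives $\int\psi\,dm^+_n\to E[\psi(U^+_t)1_{\{\rho^+>t\}}]/\mathbb P(\rho^+>t)$ for every bounded continuous $\psi$, so $m^+_n$ converges weakly to the law $\mu_t$ of $U^+_t$ under $\mathbb P(\cdot\,|\,\rho^+>t)$. Since this weak limit does not depend on $t$, one gets $\mu_t=\mu_1=m^+$, which is the first assertion. Feeding $\int\psi\,dm^+_n\to\int\psi\,dm^+$ back into the second displayed identity for a general $F$ and applying dominated convergence once more gives $E[\psi(U^+_t)\,F\,1_{\{\rho^+>t\}}]=(\int\psi\,dm^+)\,E[F\,1_{\{\rho^+>t\}}]$, i.e.\ under $\mathbb P(\cdot\,|\,\rho^+>t)$ the variable $U^+_t$ is independent of $W$ with law $m^+$. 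I expect the only genuinely delicate point to be the excursion bookkeeping of the second paragraph---correctly matching the excursion of $W^+$ containing $t$ to a pair $(S^+_{k,n},T^+_{k,n})$ and verifying the inclusion in $A_{k,n}$; the rest is a direct application of Lemma \ref{bhu} together with bounded/dominated convergence.
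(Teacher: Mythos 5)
Your proposal is correct and follows essentially the same route as the paper's proof: localise $U^+_t$ via the excursion intervals $[S^+_{k,n},T^+_{k,n})$, invoke Lemma~\ref{bhu} for the conditional independence and the law $m^+_n$, pass to the limit in $n$ using $1_{\{t\in I_n\}}\uparrow 1$ on $\{\rho^+>t\}$, and observe that the limit of $\int f\,dm^+_n$ is $t$-independent. The only difference is cosmetic: you phrase the independence via test functions $F$ while the paper works directly with $E[\,\cdot\,|W]$, and you spell out the excursion bookkeeping that the paper leaves implicit.
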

\noindent\begin{proof}
For each bounded continuous function $f : \R\longrightarrow\R,$
$$\begin{array}{ll}
E[f(U^+_{t})|W] 1_{\{0<t<\rho^+\}}&=\displaystyle{\lim_{n\rightarrow \,\infty}}\displaystyle{\sum_{k}}
E\big[1_{\{t\in[S_{k,n}^{+},T_{k,n}^{+}[\}}f(V_{k,n}^{+})|W\big]1_{\{0<t<\rho^+\}}\\
&=\displaystyle{\lim_{n\rightarrow \,\infty}}\displaystyle{\sum_{k}} 1_{\{t\in[S_{k,n}^{+},T_{k,n}^{+}\wedge \rho^+[\}}\left(\int f dm_{n}^+\right)\\
&=\left[1_{\{0<t<\rho^+\}}\displaystyle{\lim_{n\rightarrow \,\infty}}\int f dm_{n}^+\right]\\
\end{array}$$
Consequently $$\displaystyle{\lim_{n\rightarrow \,\infty}}\int f dm_{n}^+=\frac{1}{\mathbb P(\rho^+>t)}E[f(U_t^+) 1_{\{\rho^+>t\}}].$$
The left-hand side no longer depends on $t$, which completes the proof.
\end{proof}
By analogy, we define the measure $m^-$  such that if $\rho^-=\inf\{t\geq0: W^-_t=l\}$, then for all $t>0$, under $\mathbb P(\cdot|\rho^->t)$, $U^-_t$ and $W$ are independent and $U^-_t\overset{law}{=}m^-$. Recall the definition $\rho_0=\inf(\rho^+,\rho^-)$, then for all $t>0$, the law of $U^+_{t}$ (respectively $U^-_{t}$) knowing $\{\rho_0>t\}$ is given by $m^+$ (respectively $m^-$).\\
Now take $s=0$ and fix $z\in\mathscr C$. Similarly to (\ref{gt}), we can deduce from (\ref{label}) that a.s. for all $t\in[0,\rho_0]$,
$$\varphi^c_{t}(z)=ze^{i\epsilon(z)W_t},\ \ \varphi^c_{t}(1)\in\{e^{iW^+_t},e^{-iW^+_t}\}\ \textrm{and}\ \ \varphi^c_{t}(e^{il})\in\{e^{i(l+W^-_t)},e^{i(l-W^-_t)}\}.$$
Note that $\varphi^c$ is constructed such that for all $x,y\in\mathscr C$ as. $\varphi^c_{\cdot}(x)$ and $\varphi^c_{\cdot}(y)$ collide whenever they meet. So a.s. for all $t\in[0,\rho_0]$,
\begin{eqnarray}
\varphi^c_{t}(z)&=&ze^{i\epsilon(z)W_{t}} 1_{\{t\leq \tau_0(z)\}}\nonumber\\
&+&\left(\varphi^c_{t}(1)1_{\{ze^{i\epsilon(z)W_{\tau_{0}(z)}}=1\}}+\varphi^c_{t}(e^{il})1_{\{ze^{i\epsilon(z)W_{\tau_{0}(z)}}=e^{il}\}}\right)1_{\{t>\tau_0(z)\}},\nonumber\
\end{eqnarray}
By (\ref{h}), the second claim of Proposition \ref{wouk} (i) holds.\\
\textbf{Proof of Proposition \ref{wouk} (ii)}
We first prove the following statements: For all $0<s<t$, we have
\begin{itemize}
\item [(a)] Conditionally to $\{s<\rho_0, t<\rho_s\}$, $U^+_{s,t}, U^-_{0,s}, W$ are independent and $U^+_{s,t}$ (resp. $U^-_{0,s}$) has for law $m^+$ (resp. $m^-$).
\item [(b)] Let $$g^{\pm}_t=\sup\{u\in[0,t] : W^{\pm}_u=0\}.$$
Then, conditionally to $\{g^-_t<s<g^+_t, s<\rho_0\}$, $U^+_{0,t}, U^-_{0,t}, W$ are independent and the law of $U^+_{0,t}$
 (resp. $U^-_{0,t}$) is $m^+$ (resp. $m^-$).
\item [(c)] Conditionally to $\{g^-_t<g^+_t, t<\rho_0\}$, $U^+_{0,t}, U^-_{0,t}, W$ are independent.
\item [(d)] Conditionally to $\{t<\rho_0\}$, $U^+_{0,t}, U^-_{0,t}, W$ are independent.
\end{itemize}

\noindent \textbf{(a)} Note that $\{s<\rho_0\}\in \mathcal F^W_{s},\ \{t<\rho_s\}\in \mathcal F^W_{s,+\infty}$ and $\mathcal F^W_{0,+\infty}=\mathcal F^W_{s}\vee\mathcal F^W_{s,+\infty}$ with $\mathcal F^W_{s}\subset \mathcal F^K_{s},\ \mathcal F^W_{0,+\infty}\subset\mathcal F^K_{0,+\infty}$. Now (a) holds from Proposition \ref{wouk} (i) and using the independence of $\mathcal F^K_{s}$ and $\mathcal F^K_{s,+\infty}$. \\
\textbf{(b)} By (a), it suffices to show that on $A=\{g^-_t<s<g^+_t, s<\rho_0\}$ (which is a subset of $\{s<\rho_0, t<\rho_s\}$), a.s. $U^-_{0,t}=U^-_{0,s}$ and $U^+_{0,t}=U^+_{s,t}$. The first equality is clear since $r\longmapsto U^-_{r}$ is constant on the excursions of $W^-$ on $[0,\rho]$ and on $A$, $s$ and $t$ belong to the same excursion of $W^-$. Moreover, on $A$, we have $Z:=\varphi^c_{s}(1)\in\{e^{iW^+_s}, e^{-iW^+_s}\}$ and so $\mathbb P(\cdot|A)$ a.s. $$\tau_s(Z)=\inf\{r\geq s: W_r-\textrm{m}^+_{0,s}=0\}=\inf\{r\geq s: W^+_r=0\}\leq g^+_t.$$
Clearly $\varphi^c_{s,\tau_s(Z)}(Z)=\varphi^c_{s,\tau_s(Z)}(1)=1$ and therefore $\varphi^c_{s,r}(Z)=\varphi^c_{s,r}(1)$ for all $r\geq\tau_s(Z)$ (using the coalescence property of $\varphi^c$ and the independence of increments). On $A$, $\tau_s(Z)\leq g^+_t\leq t$ and by the flow property of $\varphi^c$, a.s.
$$\varphi^c_{t}(1)=\varphi^c_{s,t}(y)=\varphi^c_{s,t}(1).$$
Using (\ref{h}), we get $\mathbb P(\cdot|A)$ a.s. $U^+_{0,t}=U^+_{s,t}$. \\
\textbf{(c)} For all $n\geq0$, let $\mathbb D_n=\{\frac{k}{2^n},\ k\in\N\}$ and $\mathbb D=\cup_{n\in\N}\mathbb D_n$. Define for $0\leq u<v$, $$n(u,v)=\inf\{n\in\N: \mathbb D_n\cap ]u,v[\neq\emptyset\}\quad \textrm{and}\quad f(u,v)=\inf (\mathbb D_{n(u,v)}\cap]u,v[).$$
Then by writing $$\{g^-_t<g^+_t, t<\rho_0\}=\bigcup_{s\in\mathbb D}\{g^-_t<s<g^+_t, t<\rho_0, s=f(g^-_t,g^+_t)\}$$
and using that $f(g^-_t,g^+_t)1_{\{g^-_t<g^+_t\}}$ is $\sigma(W)$-measurable, (c) easily holds from (b).\\
\textbf{(d)} By analogy with (c), conditionally to $\{g^+_t<g^-_t, t<\rho_0\}$, $U^+_{0,t}, U^-_{0,t}, W$ are independent. Now (d) holds after remarking that as. $\{t<\rho_0\}=\{g^-_t<g^+_t, t<\rho_0\}\cup\{g^+_t<g^-_t, t<\rho_0\}$.\\
Finally Proposition \ref{wouk} (ii) holds for $s=0$ and thus for all $s$ using the stationarity of $K$.\\
Now the proof of Proposition \ref{wouk} is completed. \qed
\begin{prop}\label{jhkh}
We have $K\overset{law}{=}K^{m^+,m^-}$.
\end{prop}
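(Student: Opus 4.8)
The plan is to identify, for each fixed $t\ge 0$, the law of the random kernel $K_{0,t}$ with that of $K^{m^+,m^-}_{0,t}$; by the discussion at the beginning of Section 3.2, the flow property, stationarity and independence of increments of $K$ then force $K\overset{law}{=}K^{m^+,m^-}$. For this it is enough to prove that $E[\Phi(K_{0,t})]=E[\Phi(K^{m^+,m^-}_{0,t})]$ for every bounded measurable functional $\Phi$ on the space of kernels on $\mathscr C$. Fix $t>0$ and reuse the partition and events introduced in the proof of Proposition \ref{oo}: $t^n_i=\frac{it}{n}$ for $0\le i\le n$, $A_{n,i}=\{t^n_i\le\rho_{t^n_{i-1}}\}$ and $A_n=\bigcap_{i=1}^n A_{n,i}$. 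Recall from that proof that $A_{n,i}\in\mathcal F^W_{t^n_{i-1},t^n_i}$ and that $\mathbb P(A_n^c)\to 0$ as $n\to\infty$; note also that $\rho$ is the same functional of $W$ for both flows, so $A_n$ has the same meaning and the same probability for $K$ and for $K^{m^+,m^-}$.

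By the flow property, a.s.\ $K_{0,t}(z)=K_{0,t^n_1}K_{t^n_1,t^n_2}\cdots K_{t^n_{n-1},t}(z)$ for each $z\in\mathscr C$, and the same decomposition holds for $K^{m^+,m^-}$ by Corollary \ref{g}. Hence there is a fixed measurable map $\Psi$ (namely: compose the kernels, apply $\Phi$, and multiply by the product of the indicators) with $\Phi(K_{0,t})\,1_{A_n}=\Psi\big((K_{t^n_{i-1},t^n_i},1_{A_{n,i}})_{1\le i\le n}\big)$, and likewise for $K^{m^+,m^-}$ with the same $\Psi$. Since both flows have independent increments and $A_{n,i}$ is measurable with respect to the increment on $[t^n_{i-1},t^n_i]$, the pairs $(K_{t^n_{i-1},t^n_i},1_{A_{n,i}})$, $1\le i\le n$, are independent, so $E[\Phi(K_{0,t})\,1_{A_n}]$ depends only on the laws of these pairs, and — because the indicators force the events $A_{n,i}$ — only on the numbers $\mathbb P(A_{n,i})$ together with the conditional laws of $K_{t^n_{i-1},t^n_i}$ given $A_{n,i}$.

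It remains to match, block by block, these conditional laws. By Proposition \ref{wouk}(i)--(ii) applied at the deterministic time $t^n_{i-1}$, conditionally on $A_{n,i}$ the kernel $K_{t^n_{i-1},t^n_i}$ is given by the explicit formulas of Proposition \ref{wouk}(i) in terms of $W$ on $[t^n_{i-1},t^n_i]$ and a pair $(U^+,U^-)$ that is independent of $W$ and has law $m^+\otimes m^-$; by the very construction in Section \ref{ss}, conditionally on $A_{n,i}$ the kernel $K^{m^+,m^-}_{t^n_{i-1},t^n_i}$ is given by exactly the same formulas with $(U^+,U^-)$ having the same conditional joint law (independent of $W$, law $m^+\otimes m^-$). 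As moreover $\mathbb P(A_{n,i})$ is the same for both flows, we conclude that $E[\Phi(K_{0,t})\,1_{A_n}]=E[\Phi(K^{m^+,m^-}_{0,t})\,1_{A_n}]$ for every $n$.

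Letting $n\to\infty$, and using $|E[\Phi(K_{0,t})]-E[\Phi(K_{0,t})\,1_{A_n}]|\le\|\Phi\|_\infty\,\mathbb P(A_n^c)\to 0$ (and the analogous bound for $K^{m^+,m^-}$), we get $E[\Phi(K_{0,t})]=E[\Phi(K^{m^+,m^-}_{0,t})]$, hence $K_{0,t}\overset{law}{=}K^{m^+,m^-}_{0,t}$ and therefore $K\overset{law}{=}K^{m^+,m^-}$. The only delicate point — and it is a mild one, the substantive work being already contained in Proposition \ref{wouk} and in the construction of Section \ref{ss} — is that the building blocks $K_{t^n_{i-1},t^n_i}$ are described explicitly only on the events $A_{n,i}$; this is exactly why one restricts to $A_n$ and invokes $\mathbb P(A_n^c)\to 0$, and why one must verify that the conditional law of each block given $A_{n,i}$, jointly with the Brownian increment, is the same for the two flows.
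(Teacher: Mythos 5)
Your proof is correct and follows essentially the same approach as the paper: partition $[0,t]$ into $n$ blocks, restrict to $A_n=\bigcap_i A_{n,i}$ where each block stays within the window $[t^n_{i-1},\rho_{t^n_{i-1}}]$, invoke Proposition \ref{wouk} to match the conditional law of each increment given $A_{n,i}$ (together with the independence of increments and the $\mathcal F^W_{t^n_{i-1},t^n_i}$-measurability of $A_{n,i}$), and let $\mathbb P(A_n^c)\to 0$. The only presentational difference is that the paper first constructs, on the same probability space as $K$ and driven by the same $W$, a coupled flow $K'$ with $(K',W)\overset{law}{=}(K^{m^+,m^-},W)$, and then compares $K$ with $K'$ pathwise on $A_n$; you instead compare expectations of functionals of $K_{0,t}$ and $K^{m^+,m^-}_{0,t}$ directly, which avoids introducing the auxiliary $K'$ at the small cost of spelling out that $E[\Phi(K_{0,t})\,1_{A_n}]$ depends only on $\mathbb P(A_{n,i})$ and the conditional law of each block on $A_{n,i}$. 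Both versions rest on the same two pillars (Proposition \ref{wouk} and the estimate $\mathbb P(A_n^c)\to 0$ from the proof of Proposition \ref{oo}); there is no substantive gap.
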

\begin{proof}
Like in Section \ref{ss}, extending the probability space, we can construct a flow $K'$ such that $(K',W)$ has the same law as $(K^{m^+,m^-},W)$. By Proposition \ref{wouk}, for all $t>s$, $K_{s,t}\overset{law}{=}K'_{s,t}$ conditionally to $\{\rho_s>t\}$. For $t>0$ and $n\geq 1$, let $t^n_i=\frac{it}{n}, i\in[0,n]$ and define $A_{n,i}=\{t^n_i\leq\rho_{t^n_{i-1}}\}\in \mathcal F^{W}_{t^n_{i-1},t^n_i}$, $A_n=\cap_{i=1}^{n}A_{n,i}$. Then by the independence of increments of $K$ and $K'$,
$$(K_{0,t^n_1},\cdots,K_{t^n_{n-1},t})\overset{law}{=}(K'_{0,t^n_1},\cdots,K'_{t^n_{n-1},t})\ \textrm{on}\ A_n.$$
Recall that $\mathbb P(A_n^{c})\rightarrow0$ as $n\rightarrow\infty$ (see the proof of Proposition \ref{oo}). Letting $n\rightarrow\infty$ and using the flow property for both $K$ and $K'$, we deduce that $K_{0,t}\overset{law}{=}K'_{0,t}$.
\end{proof}
\begin{remark}
Let $\varphi$ be the coalescing flow constructed in Section \ref{TAB}, then $\varphi\overset{law}{=}\varphi^c$. As before this remains to show that  conditionally to $\{\rho_s>t\}$, $\varphi_{s,t}$ is distributed as $\varphi^c_{s,t}$. However the situation is more easy here and we do not need the lemmas \ref{r},\ref{uhb},\ref{hub},\ref{bhu} and \ref{buh}. For example
$$\eta^+_{s,t}=1_{\{\varphi^c_{s,t}(1)\in\mathscr C^+\}}-1_{\{\varphi^c_{s,t}(1)\in\mathscr C^-\}}$$
is independent of $\sigma(|\varphi^c_{s,u}(1)|, s\leq u\leq \rho_s)$ conditionally to $\{\rho_s>t\}$ where $|\cdot|$ is the distance to $1$ since $\varphi^c_{s,\cdot}(1)$ is a Brownian motion on $\mathscr C$. Following Proposition \ref{jhkh}, we check that $\varphi\overset{law}{=}\varphi^c$. In particular $\varphi^c$ solves $(T_{\mathscr C})$.
\end{remark}
\section{Proof of Proposition \ref{telv}}\label{maths}
In this section, we use the same notations as in Section \ref{TAB}. For $r\geq0$, we denote $W^{\pm}_{0,r}$ simply by $W^{\pm}_{r}$. For all $a\in\R, b\geq 0$ define $$T_a=\inf\{r\geq 0: W_{r}=a\}\ \textrm{and}\ \ \gamma^{\pm}_b=\inf\{r\geq 0: W^{\pm}_{r}=b\}.$$ 
\noindent We will further need the following
\begin{lemma}\label{zz}
For all $a>0, b>0$ and $c<0$, we have $\mathbb P(T_a<\gamma^-_b\wedge T_c)>0$.
\end{lemma}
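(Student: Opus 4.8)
The plan is to produce an explicit event of positive probability on which the path $W$ rises from $0$ to $a$ in a controlled, essentially monotone fashion, so that its drawdown $W^-_{0,\cdot}$ never reaches $b$ and it never goes below $c$. First I would fix an integer $N$ with $a/N<b$ (possible since $a/N\to 0$) together with $\epsilon\in\big(0,\min(b-a/N,\,-c)\big)$, and set $x_k=ka/N$ for $0\le k\le N$, so $x_0=0$ and $x_N=a$. Then I define $\sigma_0=0$ and, recursively on the event $\{W_{\sigma_{k-1}}=x_{k-1}\}$, the stopping time $\sigma_k=\inf\{t\ge\sigma_{k-1}:W_t\in\{x_k,\,x_{k-1}-\epsilon\}\}$ (its value off this event being irrelevant); I then put $E_k=\{W_{\sigma_k}=x_k\}$ and $G=E_1\cap\cdots\cap E_N$.

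The probability of $G$ is estimated by iterating the strong Markov property. On $E_1\cap\cdots\cap E_{k-1}$ one has $W_{\sigma_{k-1}}=x_{k-1}$ and $\sigma_{k-1}<\infty$, and $W_{\sigma_{k-1}+\cdot}-x_{k-1}$ is a standard Brownian motion independent of $\mathcal F_{\sigma_{k-1}}$; the classical exit formula for its first exit from $(-\epsilon,a/N)$ gives $\mathbb P(E_k\mid\mathcal F_{\sigma_{k-1}})=\epsilon/(a/N+\epsilon)$ on that event. Hence by induction $\mathbb P(G)=\big(\epsilon/(a/N+\epsilon)\big)^{N}>0$.

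It then remains to verify that $G\subset\{T_a<\gamma^-_b\wedge T_c\}$. Working on $G$, a short induction shows $\sup_{[0,\sigma_{k-1}]}W=x_{k-1}$; moreover for $s\in[\sigma_{k-1},\sigma_k)$ continuity and the definition of $\sigma_k$ force $W_s\in(x_{k-1}-\epsilon,x_k)$, so $\sup_{[0,t]}W\in[x_{k-1},x_k)$ for $t\in[\sigma_{k-1},\sigma_k)$ and therefore $W^-_{0,t}=\sup_{[0,t]}W-W_t<a/N+\epsilon<b$, while $W_t>x_{k-1}-\epsilon\ge-\epsilon>c$. At each time $\sigma_k$ itself, $W_{\sigma_k}=x_k$ is a new running maximum, so $W^-_{0,\sigma_k}=0<b$ and $W_{\sigma_k}=x_k\ge0>c$. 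Consequently, on $G$, $W^-_{0,t}<b$ and $W_t>c$ for every $t\in[0,\sigma_N]$, whereas $W_{\sigma_N}=a$; this yields $T_a\le\sigma_N<\gamma^-_b\wedge T_c$ on $G$, and finally $\mathbb P(T_a<\gamma^-_b\wedge T_c)\ge\mathbb P(G)>0$.

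I do not anticipate a real obstacle: the only delicate points are the bookkeeping that the running supremum equals exactly $x_{k-1}$ at time $\sigma_{k-1}$ — which is what makes the drawdown reset to $0$ there and lets the ``fresh'' Brownian motion control it on the next stretch — and choosing $\epsilon$ small enough to keep the drawdown below $b$ and the path above $c$ simultaneously.
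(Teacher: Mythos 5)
Your proof is correct and takes essentially the same route as the paper: both construct a ladder of stopping times along which $W$ makes monotone upward progress while its drawdown and running minimum stay controlled, and compute the positive probability of that staircase event via the strong Markov property. The only cosmetic difference is that you use an asymmetric exit interval $(-\epsilon,a/N)$ with per-rung probability $\epsilon/(a/N+\epsilon)$, whereas the paper uses a symmetric interval $(-\eta,\eta)$ with $\eta<\tfrac{b}{2}\wedge(-c)$ and per-rung probability $\tfrac12$.
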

\begin{proof}
Fix $\eta\in]0,\frac{b}{2}\wedge (-c)[$ and let $k\geq 1$ such that $k\eta\geq a$. Now define the sequence of stopping times $(R_i)_{i\geq 0}$ such that $R_0=0$ and for $i\geq 0$, $$R_{i+1}=\inf\{r\geq R_i: |W_{r}-W_{R_i}|=\eta\}.$$
Let $A=\cap_{i=1}^{k}\{W_{R_i}=W_{R_{i-1}}+\eta\}$. Then on $A$, $\sup_{r\leq R_k}W_r=k\eta\geq a$ and for all $i\in[0,k-1], u\in[R_i,R_{i+1}]$, 
$$W^-_{u}=\sup_{r\leq u}W_r-W_u=\sup_{R_i\leq s\leq u}(W_s-W_u)\leq 2\eta<b.$$
Moreover $\inf_{0\leq r\leq R_k} W_r>-\eta\geq c$. Since $A\subset \{T_a<\gamma^-_b\wedge T_c\}$ and $\mathbb P(A)=\frac{1}{2^k}$, this proves the lemma.
\end{proof}
Let $a>0$. Since $\{T_a<\gamma^-_a\wedge T_{-a}\}\subset \{T_a<\gamma^-_a\}$, we deduce that $\mathbb P(T_a<\gamma^-_a)>0$. Obviously $\gamma^+_a\leq T_a$. Since $W\overset{law}{=}-W$, we have $\mathbb P(\gamma^+_a<\gamma^-_a)=\mathbb P(\gamma^-_a<\gamma^+_a)=\frac{1}{2}$. Remark also that 
$$\gamma^+_a\wedge \gamma^-_a=\inf\{r\geq 0: W^+_{r}+W^-_{r}=a\}.$$
This shows that on $\{\gamma^+_a<\gamma^-_a\}$, we have $W^-_{\gamma^+_a}=0$ and similarly on $\{\gamma^-_a<\gamma^+_a\}$, we have $W^+_{\gamma^-_a}=0$.
\subsection{The case $l=\pi$.\label{habilt}\newline}
This is the more easy case. 
\begin{lemma}\label{ho} With probability $1$, for all $z\in\mathscr C$, we have

$$\varphi_{0,\gamma^+_{\pi}}(z)=-1,\qquad K^{m^+,m^-}_{0,\gamma^+_{\pi}}(z)=\delta_{-1}$$
and 
$$\varphi_{0,\gamma^-_{\pi}}(z)=1,\qquad K^{m^+,m^-}_{0,\gamma^-_{\pi}}(z)=\delta_{1}.$$

\end{lemma}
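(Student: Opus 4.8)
The plan is to establish the two displays for $\gamma^+_\pi$; those for $\gamma^-_\pi$ follow by the symmetric argument, interchanging $W^+\leftrightarrow W^-$ and the vertices $1\leftrightarrow e^{il}$ (recall $e^{il}=-1$ here). Since $l=\pi$ we have $\rho_0=\rho^1_0=\gamma^+_\pi\wedge\gamma^-_\pi$, and we may assume $\gamma^+_\pi\neq\gamma^-_\pi$, the complement having probability $0$. The heart of the argument will be the following statement about the first interval: a.s. there is $P_1\in\{1,e^{il}\}$, equal to $e^{il}$ on $\{\gamma^+_\pi<\gamma^-_\pi\}$ and to $1$ on $\{\gamma^-_\pi<\gamma^+_\pi\}$, such that $\varphi_{0,\rho_0}(z)=P_1$ and $K^{m^+,m^-}_{0,\rho_0}(z)=\delta_{P_1}$ for \emph{every} $z\in\mathscr C$ --- in other words $\varphi_{0,\rho_0}$ collapses all of $\mathscr C$ onto one vertex, and likewise for the kernel.

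To prove this I would treat the vertices and the rest separately. For $z\in\{1,e^{il}\}$ the formulas of Section \ref{ss} give $\varphi_{0,\rho_0}(1)=e^{i\varphi^+_{0,\rho_0}(0)}$ and $\varphi_{0,\rho_0}(e^{il})=e^{i(l+\varphi^-_{0,\rho_0}(0))}$ with $|\varphi^{\pm}_{0,\rho_0}(0)|=W^{\pm}_{0,\rho_0}$; on $\{\gamma^+_\pi<\gamma^-_\pi\}$ one has $W^+_{0,\rho_0}=\pi$ and $W^-_{0,\rho_0}=0$ (the latter being the identity recalled just above), so both vertices are sent to $e^{\pm i\pi}=e^{il}$, and the kernel weights $U^{\pm}_{0,\rho_0}$ are immaterial because $W^{\pm}_{0,\rho_0}\in\{0,\pi\}$; on $\{\gamma^-_\pi<\gamma^+_\pi\}$ both vertices are sent to $1$ by the same computation. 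For $z=e^{i\theta}\notin\{1,e^{il}\}$ the key point is that $W^+_{0,\rho_0}+W^-_{0,\rho_0}=\pi$, so $W([0,\rho_0])$ is a closed interval of length exactly $\pi$; since $\{W\in\R:\,ze^{i\epsilon(z)W}\in\{1,e^{il}\}\}$ is a coset of $\pi\Z$, it meets $W([0,\rho_0])$, hence $\tau_0(z)\le\rho_0$. If $\tau_0(z)<\rho_0$ the point has already reached a vertex $v$, and the construction (the remark following the definitions of $\varphi$ and $K^{m^+,m^-}$ in Section \ref{ss}) gives $\varphi_{0,\rho_0}(z)=\varphi_{0,\rho_0}(v)$ and $K^{m^+,m^-}_{0,\rho_0}(z)=K^{m^+,m^-}_{0,\rho_0}(v)$, which reduces to the vertex case. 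The delicate point --- the one I expect to need the most care --- is the boundary case $\tau_0(z)=\rho_0$: on $\{\gamma^+_\pi<\gamma^-_\pi\}$ the running minimum $m^+_{0,\rho_0}=W_{\rho_0}-\pi$ is attained at some $s^*<\rho_0$ (because $W^+_{0,\rho_0}=\pi\neq0$), and if $z$ reached the vertex $1$ at time $\rho_0$ then $W_{\rho_0}$, hence also $W_{s^*}=W_{\rho_0}-\pi$, would lie in the above coset of $\pi\Z$, contradicting $\tau_0(z)=\rho_0$; so $z$ reaches $e^{il}$ at $\rho_0$, and $\varphi_{0,\rho_0}(z)=ze^{i\epsilon(z)W_{0,\rho_0}}=e^{il}=P_1$, and likewise for the kernel. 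The case $\{\gamma^-_\pi<\gamma^+_\pi\}$ is symmetric, using that the running maximum is attained before $\rho_0$. This proves the claim.

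With this in hand, $\varphi_{0,\rho_0}(\mathscr C)=\{P_1\}$, so the flow property (Corollary \ref{g}) gives $\varphi_{0,t}(z)=\varphi_{\rho_0,t}(P_1)$ and $K^{m^+,m^-}_{0,t}(z)=K^{m^+,m^-}_{\rho_0,t}(P_1)$ for all $t\ge\rho_0$ and all $z$. Restarting afresh at the stopping time $\rho^1_0$, the same statement applies on each interval $[\rho^{k-1}_0,\rho^k_0]$, so a.s., for all $z$ and $k$, $\varphi_{0,\rho^k_0}(z)=P_k\in\{1,e^{il}\}$ (independent of $z$), $\varphi_{0,t}(z)=\varphi_{\rho^{k-1}_0,t}(P_{k-1})$ on $[\rho^{k-1}_0,\rho^k_0]$, and similarly for $K^{m^+,m^-}$, with $P_k=e^{il}$ exactly when $\rho^k_0$ is reached by $W^+_{\rho^{k-1}_0,\cdot}$ (rather than $W^-_{\rho^{k-1}_0,\cdot}$) attaining $\pi$. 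To finish, set $N=\max\{k\ge0:\rho^k_0\le\gamma^+_\pi\}$, finite since $\rho^k_0\to+\infty$ and $\gamma^+_\pi<\infty$ a.s. If $\rho^k_0$ were reached by its $W^+$-part then $W^+_{0,\rho^k_0}\ge W^+_{\rho^{k-1}_0,\rho^k_0}=\pi$, forcing $\gamma^+_\pi\le\rho^k_0$; hence every $\rho^k_0<\gamma^+_\pi$ is reached by its $W^-$-part, so $W^+_{\rho^{k-1}_0,\rho^k_0}=0$ (again the identity recalled above), and inductively $W^+_{0,\rho^k_0}=0$, whence $W^+_{0,t}=W^+_{\rho^k_0,t}$ for $t\ge\rho^k_0$. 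Applying this at $k=N$: on $\{\rho^N_0<\gamma^+_\pi\}$ we get $P_N=1$ and $W^+_{\rho^N_0,\gamma^+_\pi}=\pi$, so $\varphi_{0,\gamma^+_\pi}(z)=\varphi_{\rho^N_0,\gamma^+_\pi}(1)=e^{i\varphi^+_{\rho^N_0,\gamma^+_\pi}(0)}=e^{\pm i\pi}=-1$ and $K^{m^+,m^-}_{0,\gamma^+_\pi}(z)=\delta_{-1}$; on $\{\rho^N_0=\gamma^+_\pi\}$, $\rho^N_0$ is reached by its $W^+$-part, so $P_N=e^{il}=-1$ and the same conclusion holds. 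In all cases this is valid for every $z\in\mathscr C$ at once, which is what was claimed.
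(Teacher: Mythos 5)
Your proof is correct and its overall strategy is the same as the paper's: show that the whole circle collapses onto a single vertex at $\rho_0$ (and then at each $\rho^k_0$), and track which vertex until $\gamma^{\pm}_\pi$. The difference is one of rigor rather than route. The paper simply \emph{asserts} the two facts $\varphi_{0,\gamma^+_\pi}(1)=-1$ and $\varphi_{0,\gamma^-_\pi}(-1)=1$ and then only explains the propagation to a general $z$, using the flow property to reduce from $\gamma^-_\pi$ to $\gamma^+_\pi$; those assertions themselves require exactly the induction over $\rho$-intervals that you carry out in full, so your writeup is genuinely more self-contained. Two of the subcases you treat are in fact vacuous. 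First, for $z\notin\{1,e^{il}\}$ one always has $\tau_0(z)<\rho_0$: the coset $\{W : ze^{i\epsilon(z)W}\in\{1,e^{il}\}\}$ does not contain $0$, so the unique coset point in the range of $W$ over $[0,\rho_0]$ (or the opposite endpoint of that range, when both endpoints lie in the coset) is reached strictly before $\rho_0$; thus the boundary case $\tau_0(z)=\rho_0$ never arises. Second, the case $\rho^N_0<\gamma^+_\pi$ at the end cannot occur either, since then $W^+_{\rho^N_0,\gamma^+_\pi}=\pi$ would force $\rho^{N+1}_0\le\gamma^+_\pi$, contradicting the choice of $N$ (indeed $\gamma^+_\pi$ is always equal to some $\rho^k_0$). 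Neither redundancy harms the argument; removing them would shorten the proof without losing any content.
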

\begin{proof}
This lemma is a consequence of the facts that  $(\varphi_{0,\gamma^+_{\pi}}(1),K^{m^+,m^-}_{0,\gamma^+_{\pi}}(1))=(-1,\delta_{-1})$ and that $(\varphi_{0,\gamma^-_{\pi}}(-1),K^{m^+,m^-}_{0,\gamma^-_{\pi}}(-1))=(1,\delta_{1})$. 
Let us just explain why $\varphi_{0,\gamma^+_{\pi}}(1)=-1$ implies $\varphi_{0,\gamma^+_{\pi}}(z)=-1$ for all $z\in\mathscr C$. 
Fix $z\in\mathscr C$. To simplify, assume $\arg(z)\in [0,\pi]$. 
It holds that $\rho_0=\gamma^+_{\pi}\wedge \gamma^-_{\pi}$ and that $\tau_0(z)\leq \rho_0$. 
Then $\varphi_{0,\rho_0}(z)=\varphi_{0,\rho_0}(-1)$ on the event $\{\arg(z)+W_{\tau_0(z)}=\pi\}$ and $\varphi_{0,\rho_0}(z)=\varphi_{0,\rho_0}(1)$ on the event $\{\arg(z)+W_{\tau_0(z)}=0\}$.
Now, if $\rho_0=\gamma^+_{\pi}<\gamma^-_{\pi}$, then  $\varphi_{0,\gamma^+_{\pi}}(1)=\varphi_{0,\gamma^+_{\pi}}(-1)=-1$ (this thus implies that $\varphi_{0,\gamma^+_{\pi}}(z)=-1$). 
And if $\rho_0=\gamma^-_{\pi}<\gamma^+_{\pi}$, then  $\varphi_{0,\gamma^-_{\pi}}(1)=\varphi_{0,\gamma^-_{\pi}}(-1)=1$ (this thus implies that $\varphi_{0,\gamma^-_{\pi}}(z)=1$). 
To conclude in this case, we use the flow property $\varphi_{0,\gamma^+_{\pi}}(z)=\varphi_{\gamma^-_{\pi},\gamma^+_{\pi}}(\varphi_{0,\gamma^-_{\pi}}(z))=\varphi_{\gamma^-_{\pi},\gamma^+_{\pi}}(1)$. It remains to remark that $\varphi_{\gamma^-_{\pi},\gamma^+_{\pi}}(1)=\varphi_{0,\gamma^+_{\pi}}(1)=-1$.
\end{proof}
\noindent To prove Proposition \ref{telv}, consider the sequences of stopping times given by $S_1=\rho^+_{\pi}$ and for $k\geq 1$,
\begin{eqnarray} 
T_{k}&=&\inf\{u\geq S_k :\; W^-_{S_k,u}=\pi\},\nonumber\\
S_{k+1}&=&\inf\{u\geq T_{k}:\; W^+_{T_{k},u}=\pi\}.\nonumber\
\end{eqnarray}
Then Lemma \ref{ho} implies that $(S_{k})_{k\geq 1}$ (resp. $(T_k)_{k\geq 1}$) satisfies $(1)$ (resp. (2)) of Proposition \ref{telv}.

\subsection{The case $l\neq\pi$.\newline}
The key argument to prove Proposition \ref{telv} in this case is to find some conditions on the path of $W$ under which the image of the whole circle by $\varphi$ at some specific time is reduced to $e^{il}$. 

We fix $\delta>0$ such that $0<l-\delta<l+\delta<\pi$. For any $(\mathcal F^W_{0,\cdot})$-finite stopping time $S$ and $a \in\R$ define 
$$T_{S,a}=\inf\{r\geq S: W_{S,r}=a\}$$
and $$\gamma^{-}_{S,\delta}=\inf\{r\geq S: W_{S,r}^{-}=\delta\}.$$
Let
$$A_S=\{T_{S,2(\pi-l)}<\gamma^{-}_{S,\delta}\}.$$
The event $A_S$ is the event "for all $t\in [S,T_{S,2(\pi-l)}]$ we have $\displaystyle\sup_{s\in[S,t]}W_{S,s}\le W_{S,t} + \delta$".
Setting $T=T_{S,2(\pi-l)}$, this event can be represented by the following figure (Figure 2).
\begin{figure}[h]
\begin{center}
\includegraphics[width=11cm,height=6cm]{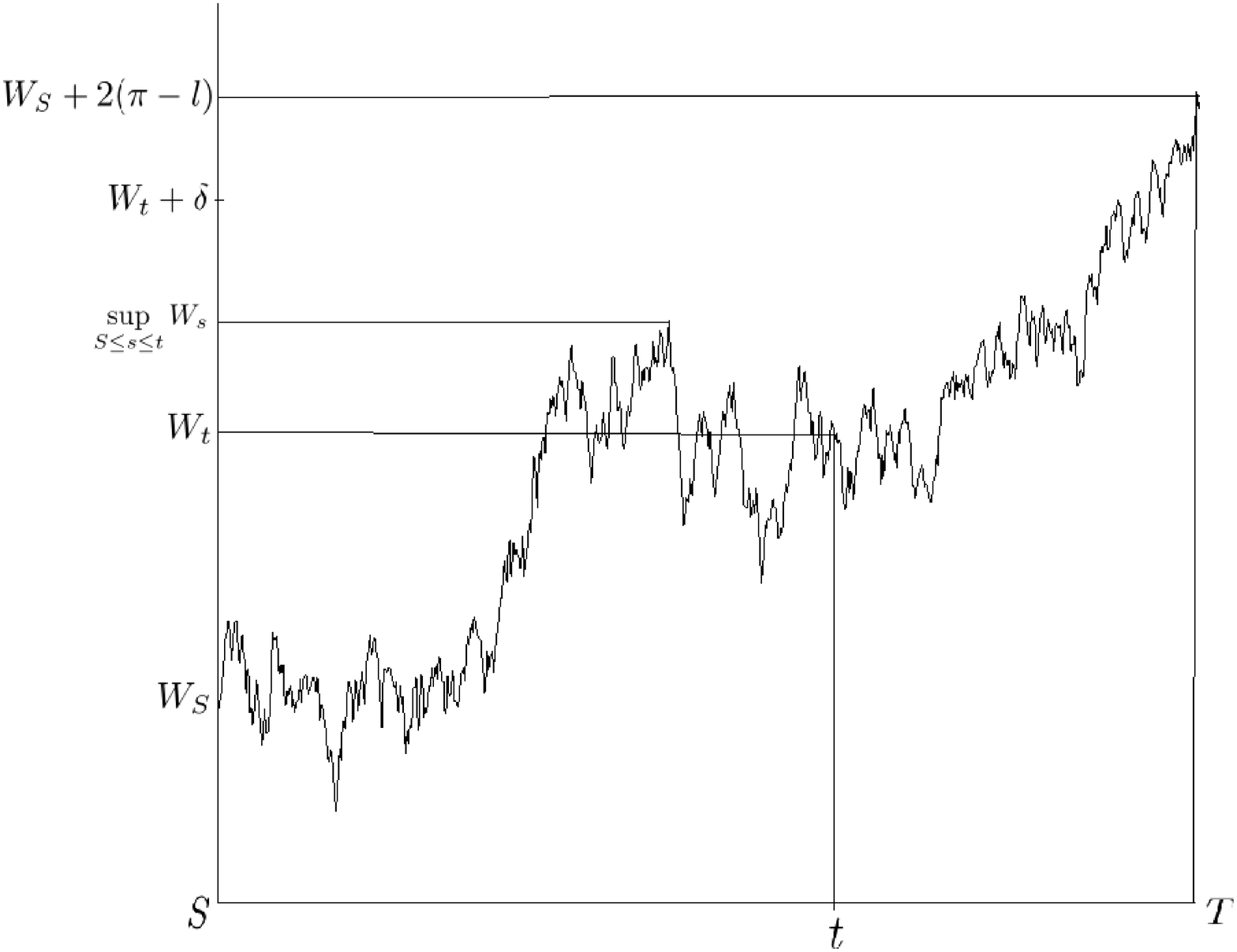}
\caption{The path of W after $S$.}
\end{center}
\end{figure}
On the event $A_S$, $W^-_{S,t}\le \delta$ for all $t\in [S,T]$ and $W_{S,T}=2\pi - 2l$. Thus on this event, we have $\varphi_{S,T}(e^{il})=\varphi_{S,T}(e^{-il})=e^{il}$ and a fortiori $\varphi_{S,T}(z)=e^{il}$ for any intermediate point $z$ such that $\arg(z)\in[l,2\pi-l]$. In other words,
$$A_S\subset\big\{\varphi_{S,\cdot}(e^{-il})\ \textrm{reaches}\ e^{il}\ \textrm{before}\ 1\ \textrm{and before that}\ \varphi_{S,\cdot}(e^{il})\ \textrm{hits}\ e^{i(l+\delta)}\ \textrm{or}\ e^{i(l-\delta)}\big\}.$$ 
Note that $A_S$ is independent of $\mathcal{F}^W_{0,S}$ and that $\mathbb{P}(A_S)>0$ and does not depend on $S$.\\

When $S=\inf\{t\ge 0;\; W^+_{0,t}=l\}$, which is also the first time $t$ when $$\sup_{s\in [0,t]} W_{0,s} - \inf_{s\in [0,t]} W_{0,s} = l.$$
Then at time $S$, we have $\arg(\varphi_{0,S}(z))\in[l,2\pi-l]$ for all $z\in\mathscr C$.  Applying the flow property, we see that on $A_S$, $\varphi_{0,T}(z)=e^{il}$ for all $z\in\mathscr C$. Now the rest of the proof will only require an application of the Borel-Cantelli Lemma. 
We give the details in the following.\\
Define the sequence $(\sigma_k)_{k\geq 0}$ of $(\mathcal F^W_{0,t})_{t\geq0}$-stopping times by $\sigma_0=0$ and for $k\geq 0, \sigma_{k+1}=T_{\rho_{\sigma_k}, 2(\pi-l)}$ (note that $2(\pi-l)=\arg(e^{-il})-\arg(e^{il}))$. Then set, for $k\geq 0$, $$C_{k}=\{W^+_{\sigma_k,\rho_{\sigma_k}}=l\}\cap A_{\rho_{\sigma_k}}.$$
Note that the events $\{W^+_{\sigma_k,\rho_{\sigma_k}}=l\}$ and $A_{\rho_{\sigma_k}}$ are independent. The following proposition describes what happens on $C_k$.
\begin{prop}
With probability $1$, for all $k\geq0$, on $C_k$, we have for all $z\in\mathscr C$,
\begin{enumerate}
 \item [(i)] $\arg(\varphi_{\sigma_k,\rho_{\sigma_k}}(z))\in[l,2\pi-l]$.
\item  [(ii)] If $\arg(z)\in[l,2\pi-l]$, then $\varphi_{\rho_{\sigma_k},\sigma_{k+1}}(z)=e^{il}$.
\item [(iii)] $\varphi_{\sigma_k,\sigma_{k+1}}(z)=e^{il}$.
\item [(iv)] $\varphi_{0,\sigma_{k+1}}(z)=e^{il}$ and $K^{m^+,m^-}_{0,\sigma_{k+1}}(z)=\delta_{e^{il}}$.
\end{enumerate}
\end{prop}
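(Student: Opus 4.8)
The plan is to prove the four statements in order; (iii) and (iv) will be quick consequences of (i), (ii) and the flow property (Corollary~\ref{g}), so the real content is in (i) and (ii), which are pathwise statements about the transport of points of $\mathscr C$ between $\sigma_k$, $\rho_{\sigma_k}$ and $\sigma_{k+1}$. I argue throughout on $C_k$, using as the main structural input the two facts recalled before the statement: $\gamma^+_a\wedge\gamma^-_a=\inf\{r\ge 0:W^+_r+W^-_r=a\}$, and $W^-_{\gamma^+_a}=0$ on $\{\gamma^+_a<\gamma^-_a\}$ (and their symmetric versions, started from an arbitrary stopping time). I also use the observation from Section~\ref{ss} that a freely rotating point meets the vertex $1$ (resp.\ $e^{il}$) only at an instant where $W^+$ (resp.\ $W^-$) vanishes, so that at that instant it genuinely coincides with $\varphi_{\cdot}(1)$ (resp.\ $\varphi_{\cdot}(e^{il})$) and follows it afterwards; this is what makes the coalescence in the construction unambiguous.

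For (i): on $\{W^+_{\sigma_k,\rho_{\sigma_k}}=l\}\supset C_k$ one has $\rho_{\sigma_k}=\gamma^+_{\sigma_k,l}$, hence $W^-_{\sigma_k,\rho_{\sigma_k}}=0$ and $W^+_{\sigma_k,t}+W^-_{\sigma_k,t}\le l$ for $t\in[\sigma_k,\rho_{\sigma_k}]$. Writing $m=\inf_{[\sigma_k,\rho_{\sigma_k}]}W_{\sigma_k,\cdot}$ and $M=\sup_{[\sigma_k,\rho_{\sigma_k}]}W_{\sigma_k,\cdot}$, this forces $M-m=l$, $W_{\sigma_k,\rho_{\sigma_k}}=M$, and hence (as $M\ge 0$) $0\le M\le l$ and $-l\le m\le 0$. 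Now I track a single point. A point with argument $\theta_0\in[0,l]$ moves as $\theta_0+W_{\sigma_k,t}$ and, because $M\le l$ and $m\ge-l$, must reach argument $0$ or $l$ before $\rho_{\sigma_k}$, so by the construction it then coincides with $\varphi_{\sigma_k,\cdot}(1)$ or $\varphi_{\sigma_k,\cdot}(e^{il})$; since $\varphi_{\sigma_k,\rho_{\sigma_k}}(1)=e^{\pm iW^+_{\sigma_k,\rho_{\sigma_k}}}=e^{\pm il}$ and $\varphi_{\sigma_k,\rho_{\sigma_k}}(e^{il})=e^{il}$ (using $W^-_{\sigma_k,\rho_{\sigma_k}}=0$), its image has argument $l$ or $2\pi-l$. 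A point with argument $\theta_0\in(l,2\pi)$ moves as $\theta_0-W_{\sigma_k,t}$; if it reaches $0$ or $l$ it again ends at $e^{\pm il}$, and if it reaches neither then its image at $\rho_{\sigma_k}$ has argument $\theta_0-M$, which lies in $[l,2\pi-l]$ — the lower bound because not reaching argument $l$ means $W_{\sigma_k,\cdot}<\theta_0-l$, the upper bound because not reaching argument $2\pi$ means $m\ge\theta_0-2\pi$, which together with $M-m=l$ gives $\theta_0-M\le 2\pi-l$. This proves (i).

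For (ii): here I use $A_{\rho_{\sigma_k}}$, which gives $W^-_{\rho_{\sigma_k},t}<\delta<l$ (hence $W_{\rho_{\sigma_k},t}>-\delta$) for $t\in[\rho_{\sigma_k},\sigma_{k+1}]$ and $W_{\rho_{\sigma_k},\sigma_{k+1}}=2(\pi-l)$, which is the running maximum. First, $\varphi_{\cdot}(e^{il})$ stays pinned at $e^{il}$ up to $\sigma_{k+1}$: decomposing $[\rho_{\sigma_k},\sigma_{k+1}]$ along the stopping times $\rho^j_{\rho_{\sigma_k}}$ (finitely many meet it), each completed sub-block has $W^-<l$ throughout, hence is a ``$W^+$ reaches $l$ first'' block, so $W^-$ vanishes at its right endpoint and $e^{il}$ is mapped to $e^{il}$ across it, while on the last partial block $W^-$ again vanishes at $\sigma_{k+1}$; thus $\varphi_{\rho^j_{\rho_{\sigma_k}},\sigma_{k+1}}(e^{il})=e^{il}$ for every relevant $j$. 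Next, a point $z$ with $\arg(z)=\theta\in(l,2\pi-l]$ has $\epsilon(z)=-1$, moves as $\theta-W_{\rho_{\sigma_k},t}$, cannot reach vertex $1$ before $\sigma_{k+1}$ (that would need $W_{\rho_{\sigma_k},t}\le\theta-2\pi\le-l<-\delta$), and therefore first reaches $e^{il}$ precisely when $W_{\rho_{\sigma_k},\cdot}$ first attains $\theta-l\in(0,2(\pi-l)]$, a time $\le\sigma_{k+1}$; at that instant $W^-=0$, so it coincides with $\varphi_{\cdot}(e^{il})$ there, and hence, by the flow property across the $\rho^j_{\rho_{\sigma_k}}$-blocks, $\varphi_{\rho_{\sigma_k},\sigma_{k+1}}(z)=e^{il}$. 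With the case $z=e^{il}$ this gives (ii). I expect this step — the pathwise bookkeeping, and especially keeping $\varphi_{\cdot}(e^{il})$ pinned at $e^{il}$ up to $\sigma_{k+1}$ while respecting the block structure of the construction — to be the main obstacle.

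Finally, (iii) and (iv) follow from Corollary~\ref{g}. With $S=\sigma_k$, $T=\rho_{\sigma_k}$: a.s.\ $\varphi_{\sigma_k,\sigma_{k+1}}(z)=\varphi_{\rho_{\sigma_k},\sigma_{k+1}}\big(\varphi_{\sigma_k,\rho_{\sigma_k}}(z)\big)$ for all $z$, and on $C_k$ the inner point has argument in $[l,2\pi-l]$ by (i), hence is sent to $e^{il}$ by (ii); this is (iii). With $S=0$, $T=\sigma_k$: $\varphi_{0,\sigma_{k+1}}(z)=\varphi_{\sigma_k,\sigma_{k+1}}\big(\varphi_{0,\sigma_k}(z)\big)=e^{il}$ by (iii), which holds at every point of $\mathscr C$. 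The kernel statement is obtained by running the same argument for $K^{m^+,m^-}$: mass is transported in exactly the same way, and whenever a point reaches $e^{il}$ at an instant where $W^-=0$ the corresponding kernel equals $U^-\delta_{e^{il}}+(1-U^-)\delta_{e^{il}}=\delta_{e^{il}}$, so $K^{m^+,m^-}_{\sigma_k,\sigma_{k+1}}(z)=\delta_{e^{il}}$ for all $z$ on $C_k$, whence $K^{m^+,m^-}_{0,\sigma_{k+1}}(z)=K^{m^+,m^-}_{0,\sigma_k}K^{m^+,m^-}_{\sigma_k,\sigma_{k+1}}(z)=\delta_{e^{il}}$.
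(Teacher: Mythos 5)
Your proof is correct and follows essentially the same route as the paper's: the same pathwise bounds ($M-m=l$ and $W^-_{\rho_{\sigma_k},\sigma_{k+1}}=0$), the same decomposition along the blocks $\rho^j_{\rho_{\sigma_k}}$, and the same appeal to Corollary~\ref{g} for (iii)--(iv). Your step in (ii) showing that $e^{il}$ is mapped to itself across each intermediate sub-block is a useful refinement of the paper's terser argument (the paper's line ``$\tau_{\rho^{n+1}}(Z)=\tau_\rho(z)$'' tacitly assumes $z$ has not already coalesced by $\rho^{n+1}$, and your version covers that case cleanly), though the phrase ``stays pinned at $e^{il}$'' is imprecise---the image of $e^{il}$ oscillates during a sub-block and only returns to $e^{il}$ at each block boundary and at $\sigma_{k+1}$, which is all your argument actually uses.
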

\begin{proof} 
We take $k=0$ (the proof is similar for all $k$). Denote $\rho_0$ simply by $\rho$ and $\rho^n_0$ by $\rho^n$.\\
\textbf{(i)} Fix $z\in\mathscr C$. If $\tau_{0}(z)\leq \rho$, then $\varphi_{0,\rho}(z)\in\{\varphi_{0,\rho}(1),\varphi_{0,\rho}(e^{il})\}$. 
On $C_0$, we have $W^+_{\rho}=l$ and so $W^-_{\rho}=0$ (see the lines after Lemma \ref{zz}). Consequently $\varphi_{0,\rho}(e^{il})=e^{il}$ and $\varphi_{0,\rho}(1)\in\{e^{il},e^{-il}\}$.\\ 
Suppose $\rho<\tau_{0}(z)$, then necessarily $\arg(z)\in]l,2\pi[$ and using that $W_{\rho}=l+\displaystyle\inf_{0\leq u\leq \rho}W_{u}$, we have
$$\varphi_{0,\rho}(z)=\exp(i(\arg(z)-W_{\rho}))=\exp(i(\arg(z)-l-\inf_{0\leq u\leq \rho}W_{u})).$$
Since $\rho<\tau_{0}(z)$, we have $\arg(z)-\displaystyle\inf_{0\leq u\leq \rho}W_{u}<2\pi$ and therefore $\arg(\varphi_{0,\rho}(z))< 2\pi-l$. It is also clear that $\arg(\varphi_{0,\rho}(z))\geq l$ which proves the first statement.\\
\textbf{(ii)}  Let $z\in\mathscr C$ with $\arg(z)\in[l,2\pi-l]$. Then $\varphi_{\rho,\cdot}(e^{-il})$ arrives to $e^{il}$ before $1$ and this happens at time $\sigma_1$. Thus $\varphi_{\rho,\cdot}(z)$ reaches $e^{il}$ before $\sigma_1$. Let $n$ be the greatest integer such that $\rho^n_{\rho}(=\rho^{n+1})\leq \sigma_1$. Then $\varphi_{\rho,\sigma_1}(z)=\varphi_{\rho^{n+1},\sigma_1}(Z)$ where $Z=\varphi_{\rho,\rho^{n+1}}(z)$. Clearly $\tau_{\rho^{n+1}}(Z)=\tau_{\rho}(z)\leq\sigma_1$. Therefore $\varphi_{\rho,\sigma_1}(z)=\varphi_{\rho^{n+1},\sigma_1}(e^{il})$. But $-W_{\rho,u}+2(\pi-l)\geq W^-_{\rho,u}$ for all $u\in[\rho,\sigma_1]$ and so $W^-_{\rho,\sigma_1}=0$. As $\rho^{n+1}\geq\rho$, we get $W^-_{\rho^{n+1},\sigma_1}=0$. That is $\varphi_{\rho,\sigma_1}(z)=e^{il}$.\\
\textbf{(iii)} and \textbf{(iv)} are immediate from the flow property (Corollary \ref{g}) and (i), (ii). The result for $K^{m^+,m^-}$ can be proved by following the same steps with minor modifications.
\end{proof}
\noindent Since for all $k\geq 0$, $\sigma_k$ is an $(\mathcal F^W_{0,t})_{t\geq0}$-stopping time, the sequence $(C_k)_{k\geq0}$ is independent. We also have $\mathbb P(C_k)=\mathbb P(C_0)=\mathbb P(A_0)\times \mathbb P(W^+_{\rho}=l)$ for all $k\geq0$. 
By Lemma \ref{zz}, $\sum_{k\geq0}\mathbb P(C_k)=\infty$ and the Borel-Cantelli lemma yields $\mathbb P(\overline{\lim}C_k)=1$.
\noindent We deduce that with probability 1, $$\varphi_{0,\sigma_k}(\mathscr C)=e^{il}\ \textrm{and}\ \ K^{m^+,m^-}_{0,\sigma_k}(\mathscr C)=\delta_{e^{il}} \ \textrm{for infinitely many}\ k.$$
To deduce Proposition \ref{telv}, we only need to extract from $(\sigma_k)_k$ a subsequence $(\sigma'_k)$ with the preceding property satisfied for all $k$ and not just for infinitely many $k$. This is the subject of the following
\begin{lemma}
Let $(k_n)_{n\geq 0}$ be the sequence of random integers defined by $k_0(\omega)=0$ and for $n\geq 0$, $$k_{n+1}(\omega)=\inf\{k>k_n(\omega) : \omega\in C_k\}.$$
Set $\sigma'_n=\sigma_{k_n}, n\geq 1$. Then $(\sigma'_n)_{n\geq 1}$ is a sequence of $(\mathcal F^W_{0,t})_{t\geq0}$-stopping times such that a.s. $\lim_{n\rightarrow\infty} \sigma'_n=+\infty$, $\varphi_{0,\sigma'_n}(\mathscr C)=e^{il}$ and $K^{m^+,m^-}_{0,\sigma'_n}(\mathscr C)=\delta_{e^{il}}$ for all $n\geq 1$.
\end{lemma}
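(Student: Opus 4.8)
The plan is to read the lemma off the preceding proposition by a standard subsequence-extraction argument: the geometric content is already in hand, and all that remains is a short measurability check together with some Borel--Cantelli bookkeeping. First, since $\mathbb P(\overline{\lim}C_k)=1$ was just established, a.s.\ the set $\{k\ge 0:\omega\in C_k\}$ is infinite, so each $k_n(\omega)$ is finite and $(k_n)_n$ is strictly increasing with $k_n\to+\infty$; thus $\sigma'_n$ is well defined. (Here $\sigma'_n$ should be understood as the collapse time $\sigma_{k_n+1}$ associated with the index $k_n$ — equivalently, the $n$-th element of $\{\sigma_{k+1}:\omega\in C_k\}$ listed in increasing order — since the preceding proposition places the collapse of the circle at $\sigma_{k+1}$ on the event $C_k$.) The collapse property then follows at once: on $C_k$ one has $\varphi_{0,\sigma_{k+1}}(z)=e^{il}$ and $K^{m^+,m^-}_{0,\sigma_{k+1}}(z)=\delta_{e^{il}}$ for every $z\in\mathscr C$, and a.s.\ $\omega\in C_{k_n(\omega)}$, so a.s.\ $\varphi_{0,\sigma'_n}(\mathscr C)=e^{il}$ and $K^{m^+,m^-}_{0,\sigma'_n}(\mathscr C)=\delta_{e^{il}}$ for all $n$. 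For the divergence I would check $\sigma_k\ge\rho^k_0$ for every $k$ by induction ($\sigma_0=0=\rho^0_0$ and $\sigma_{k+1}=T_{\rho_{\sigma_k},2(\pi-l)}\ge\rho_{\sigma_k}\ge\rho_{\rho^k_0}=\rho^{k+1}_0$, using that $r\mapsto\rho_r$ is nondecreasing), so that $\sigma'_n\ge\rho^{k_n+1}_0\to+\infty$ a.s.

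The only delicate point is the stopping-time property, and the key observation is that $C_k$ is measurable with respect to $\mathcal F^W_{0,\sigma_{k+1}}$ — not merely $\mathcal F^W_{0,\sigma_k}$. Indeed the factor $\{W^+_{\sigma_k,\rho_{\sigma_k}}=l\}$ lies in $\mathcal F^W_{0,\rho_{\sigma_k}}\subset\mathcal F^W_{0,\sigma_{k+1}}$, and the factor $A_{\rho_{\sigma_k}}=\{T_{\rho_{\sigma_k},2(\pi-l)}<\gamma^-_{\rho_{\sigma_k},\delta}\}$ is decided by observing $W$ only up to time $T_{\rho_{\sigma_k},2(\pi-l)}\wedge\gamma^-_{\rho_{\sigma_k},\delta}$, which is $\le\sigma_{k+1}=T_{\rho_{\sigma_k},2(\pi-l)}$ in every case. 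Granting this, for $t\ge0$ and integers $j_1<\cdots<j_n$ the event $\{k_1=j_1,\dots,k_n=j_n\}\cap\{\sigma_{j_n+1}\le t\}$ belongs to $\mathcal F^W_{0,t}$: on $\{\sigma_{j_n+1}\le t\}$ every $C_j$ with $j\le j_n$ lies in $\mathcal F^W_{0,\sigma_{j+1}}\subset\mathcal F^W_{0,\sigma_{j_n+1}}\subset\mathcal F^W_{0,t}$ (the sequence $(\sigma_j)_j$ being nondecreasing), and $\{\sigma_{j_n+1}\le t\}\in\mathcal F^W_{0,t}$; expressing $\{\sigma'_n\le t\}$ as the countable union over $j_1<\cdots<j_n$ of these events gives $\{\sigma'_n\le t\}\in\mathcal F^W_{0,t}$.

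The main obstacle is thus the measurability bookkeeping just described, and in particular the realization that $C_k$ only becomes known at time $\sigma_{k+1}$ — which is also why it is the times $\sigma_{k_n+1}$, and not $\sigma_{k_n}$, that constitute the correct subsequence. With this settled, combining the statements above completes the proof.
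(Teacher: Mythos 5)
Your proof is correct and follows essentially the same route as the paper: establish that $\{k_n=k\}\in\mathcal F^W_{0,\sigma_{k+1}}$ (the paper does this by an explicit induction expressing $\{k_n=k\}$ in terms of the $C_i$ and $C_i^c$; you do it more compactly by noting that $\{k_1=j_1,\dots,k_n=j_n\}$ is a Boolean function of $1_{C_0},\dots,1_{C_{j_n}}$), then decompose $\{\sigma'_n\le t\}$ over the values of $k_n$ and use the stopping-time localization.

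The genuinely useful content in your write-up is the observation, which you make explicitly, that the lemma as printed contains an off-by-one slip: since the collapse proposition places $\varphi_{0,\sigma_{k+1}}(\mathscr C)=e^{il}$ on $C_k$ and $\omega\in C_{k_n}$, the correct definition is $\sigma'_n=\sigma_{k_n+1}$ rather than $\sigma_{k_n}$. The paper's own proof carries the same slip: it derives $\{k_n=k\}\in\mathcal F^W_{0,\sigma_{k+1}}$ but then decomposes $\{\sigma_{k_n}\le t\}$ as $\cup_k\{\sigma_k\le t,\,k_n=k\}$, for which membership in $\mathcal F^W_{0,t}$ would require $\{k_n=k\}\in\mathcal F^W_{0,\sigma_k}$ (false, since $C_k$ only becomes observable at $\sigma_{k+1}$). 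Replacing $\sigma_{k_n}$ by $\sigma_{k_n+1}$ repairs both the statement and the measurability step, exactly as you did. Your divergence argument $\sigma_k\ge\rho_0^k$ by induction and the a.s.\ finiteness of the $k_n$'s via $\mathbb P(\overline{\lim}\,C_k)=1$ are also correct.
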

\begin{proof}
Remark that $C_k\in \mathcal F^W_{\sigma_{k+1}}$ for all $k\geq 0$. For all $n\geq 1$ and $t\geq 0$, we have
$$\{\sigma_{k_n}\leq t\}=\displaystyle\cup _{k\geq 1}\{\sigma_k\leq t, k_n=k\}.$$
It remains to prove that $\{k_n=k\}\in \mathcal F^W_{\sigma_{k+1}}$. We will prove this by induction on $n$. For $n=1$, this is clear since $\{k_1=1\}=C_1$ and for $k\geq 2$, 
$$\{k_1=k\}=C_1^c\cap\cdots\cap C_{k-1}^c\cap C_k.$$
Suppose the result holds for $n$. Then for all $k\geq 2$, 
$$\{k_{n+1}=k\}=\displaystyle\cup _{1\leq i\leq k-1}\left(\{k_{n}=i\}\cap C_{i+1}^c\cap \cdots C_{k-1}^c\cap C_k\right)$$
and the desired result holds for $n+1$ using the induction hypothesis.
\end{proof}

\noindent We have proved Part $(1)$ of Proposition \ref{telv} (for both $\varphi$ and $K^{m^+,m^-})$. Part $(2)$ can be deduced by analogy.
\section{The support of $K^{m^+,m^-}$ (Proof of Proposition \ref{nahi})}\label{yiu}
In this section $\rho^k_0$ and $K^{m^+,m^-}$ will be denoted simply by $\rho^k$ and $K$.
\subsection{The case $l=\pi$.\newline}
When $m^+$ and $m^-$ are both different from $\frac{1}{2}(\delta_0+\delta_1)$, a precise description of $\textrm{supp} (K_{0,t}(1))$ can be given as follows. Recall the definitions of the sequences $(S_k)_{k\geq 1}$ and $(T_k)_{k\geq 1}$ from Section \ref{habilt} and set $T_0=0$. Then for all $k\in\N, t\in[T_k,S_{k+1}]$,
$$\textrm{supp} (K_{0,t}(1))=\{e^{iW^+_{T_k,t}},e^{-iW^+_{T_k,t}}\}$$
and for all $k\geq 1, t\in[S_k,T_{k}]$,
$$\textrm{supp} (K_{0,t}(1))=\{e^{i(\pi+W^-_{S_k,t})},e^{i(\pi-W^-_{S_k,t})}\}.$$
In fact, for all $s\leq t$,
$$\textrm{supp} (K_{s,t}(1))=\{e^{iX_{s,t}},e^{-iX_{s,t}}\},$$
with $X_{s,t}$ being the unique reflecting Brownian motion on $[0,\pi]$ (see \cite{MR1771660}) solution of 
$$X_{s,t}=W_{s,t}+L^0_{s,t}-L^{\pi}_{s,t},\ \ t\geq s,$$
and
$${L}_{s,t}^{x}=\lim_{\varepsilon \rightarrow 0^+} \frac{1}{2\varepsilon}\int_{s}^{t} 1_{\{|X_{s,u}-x|\leq \varepsilon\}}du,\ \ x=0,\pi.$$ 
If $m^+=m^-=\delta_{\frac{1}{2}}$, then $K$ is a Wiener flow such that $K_{s,t}(1)=\frac{1}{2}(\delta_{e^{iX_{s,t}}}+\delta_{e^{-iX_{s,t}}})$ for all $s\leq t$. 
\subsection{The case $l\neq\pi$.\newline}
From the definition of $K$, $K_{\rho^k,t}(z)$ is carried by at most two points for all $k\geq0$, $t\in[\rho^k,\rho^{k+1}]$ and $ z\in\mathscr C$. Using the flow property and the fact that $\lim_{k\rightarrow\infty}\rho^k=\infty$ a.s., it is therefore clear that a.s.  $$\forall t\geq0,\ z\in\mathscr C,\ \textrm{Card supp}\ K_{0,t}(z)<\infty.$$
We assume in this section that $m^+$ and $m^-$ are both distinct from $\frac{1}{2}(\delta_{0}+\delta_1)$ (for the other case, see Remark \ref{rema} below).\\
Fix a decreasing positive sequence $(\alpha_k)_{k\geq1}$ such that $\alpha_1<\inf(l,2(\pi-l))$. Now define $A_1=\{\ W^+_{0,\rho^1}=l\}$
and for $k\geq 1$,
\begin{eqnarray}
A_{2k}&=&\{W^-_{\rho^{2k-1},\rho^{2k}}=l,\ \alpha_{2k}<\displaystyle\sup_{\rho^{2k-1}\leq u\leq \rho^{2k}}W_{\rho^{2k-1},u}<\alpha_{2k-1}\}\nonumber\\
&=& \{W^-_{\rho^{2k-1},\ \rho^{2k}}=l, -l+\alpha_{2k}<W_{\rho^{2k-1},\rho^{2k}}<-l+\alpha_{2k-1}\}, \nonumber\\
A_{2k+1}&=&\{W^+_{\rho^{2k},\rho^{2k+1}}=l,\ -\alpha_{2k}<\displaystyle\inf_{\rho^{2k}\leq u\leq \rho^{2k+1}} W_{\rho^{2k},u}<-\alpha_{2k+1}\}\nonumber\\
&=&\{W^+_{\rho^{2k},\rho^{2k+1}}=l,\ l-\alpha_{2k}<W_{\rho^{2k},\rho^{2k+1}}<l-\alpha_{2k+1}\}.\nonumber\
\end{eqnarray}
We are going to prove the following
\begin{prop}\label{NES}
Let $C_0=\Omega$ and $C_n=\cap_{i=1}^{n}A_i$ for all $n\geq 1$. Then for all $n\geq 0$,
\begin{enumerate}
 \item [(i)] $\mathbb P(C_n)>0$,
\item[(ii)] $\textrm{Card supp}\left(K_{0,\rho^n}(1)\right)=n+1$ a.s. on $C_n$.
\end{enumerate}
Moreover a.s. for all $k\geq 0,$
\begin{enumerate}
 \item [(ii1)] On $C_{2k}$, $$\textrm{supp}\left(K_{0,\rho^{2k}}(1)\right)=\{P^{2k}_i, 1\leq i\leq 2k+1\},$$ with $\arg(P^{2k}_i)<\arg(P^{2k}_{i+1})$ for all $i\in[1,2k]$,
$$P^{2k}_1=1,\ P^{2k}_2=e^{2il}\ \textrm{and}\ \ P^{2k}_{2k+1}=e^{i(-l-W_{\rho^{2k-1},\rho^{2k}})}.$$
(Note that $\arg(P^{2k}_{2k+1})<2\pi-\alpha_{2k}$.)
\item[(ii2)] On $C_{2k+1}$, we have 
$$\textrm{supp}\left(K_{0,\rho^{2k+1}}(1)\right)=\{P^{2k+1}_i,\ 1\leq i\leq 2k+2\},$$
with $\arg(P^{2k+1}_i)<\arg(P^{2k+1}_{i+1})$ for all $i\in[1,2k+1]$, $$P^{2k+1}_1=e^{il},\ P^{2k+1}_2=e^{i(2l-W_{\rho^{2k},\rho^{2k+1}})}\ \ \textrm{and}\ \ P^{2k+1}_{2k+2}=e^{-il}.$$
(Note that $\arg(P^{2k+1}_2)>l+\alpha_{2k+1}$.)
\end{enumerate}
\end{prop}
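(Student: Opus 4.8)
The plan is to establish the two refined assertions (ii1) and (ii2) — which together contain (ii) — by a single induction on $n$ that alternates between the two parities, and to obtain (i) separately. The base cases are immediate: for $n=0$ one has $C_0=\Omega$ and $\textrm{supp}(K_{0,0}(1))=\{1\}$; for $n=1$, on $C_1=A_1$ we have $W^+_{0,\rho^1}=l$, so by the construction of $K$ in Section \ref{ss} the measure $K_{0,\rho^1}(1)$ is carried by the two points $e^{il}$ and $e^{-il}$, which are distinct because $l\ne\pi$. For the inductive step I would combine two tools. First, the flow property (Corollary \ref{g}) gives, on $[\rho^n,\rho^{n+1}]$, $K_{0,\rho^{n+1}}(1)=\sum_{P}w_P\,K_{\rho^n,\rho^{n+1}}(P)$ with $w_P>0$ and $P$ ranging over $\textrm{supp}(K_{0,\rho^n}(1))$, hence $\textrm{supp}(K_{0,\rho^{n+1}}(1))=\bigcup_{P}\textrm{supp}(K_{\rho^n,\rho^{n+1}}(P))$. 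Second, the explicit form of the elementary kernel recalled in Section \ref{ss} (equivalently Proposition \ref{wouk}(i)): on $[\rho^n,\rho^{n+1}]$ a support point that is not a vertex rotates rigidly along its edge, $z\mapsto z\,e^{i\epsilon(z)W_{\rho^n,t}}$, until the instant it would reach a vertex, whereas a support point equal to a vertex fans out, $1$ into $e^{\pm iW^+_{\rho^n,t}}$ and $e^{il}$ into $e^{i(l\pm W^-_{\rho^n,t})}$.

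Concretely, suppose (ii1) holds at level $2k$: on $C_{2k}$ the support is the vertex $P^{2k}_1=1$ together with the $2k$ non-vertex points $P^{2k}_2=e^{2il},\dots,P^{2k}_{2k+1}$, whose arguments all lie in the arc $[2l,2\pi-\alpha_{2k})$ — so exactly one support point is a vertex. On $A_{2k+1}$ the increment $W_{\rho^{2k},\cdot}$ reaches level $l$ of $W^+$ at $\rho^{2k+1}$ while its running extrema are confined to a band controlled by $\alpha_{2k}$ and $\alpha_{2k+1}$. Using these inequalities together with $l\ne\pi$ one checks that over this interval: the vertex $1$ fans out into the two distinct points $e^{il}$ and $e^{-il}$ (this needs the mass-split $U^+_{\rho^{2k},\rho^{2k+1}}\in(0,1)$); the whole $2k$-point cloud rotates rigidly and keeps its argument in $(l,2\pi)$, so no cloud point is ever absorbed at a vertex; and the two fanning branches $e^{iW^+}$, $e^{-iW^+}$ issued from $1$ never overtake the cloud, because the $\alpha$-margins keep $\sup$ and $\inf$ of $W_{\rho^{2k},\cdot}$ away from the values producing such a coincidence. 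At $\rho^{2k+1}$ the cloud has landed strictly inside the open arc $(l+\alpha_{2k+1},2\pi-l)$, hence is disjoint from $\{e^{il},e^{-il}\}$ and still consists of $2k$ distinct points. Reading off arguments one obtains precisely (ii2) at level $2k+1$: a total of $2k+2$ points, ordered by argument as $P^{2k+1}_1=e^{il}$, then the rotated cloud starting with $P^{2k+1}_2=e^{2il}e^{-iW_{\rho^{2k},\rho^{2k+1}}}=e^{i(2l-W_{\rho^{2k},\rho^{2k+1}})}$, with $\arg(P^{2k+1}_2)>l+\alpha_{2k+1}$, and ending with $P^{2k+1}_{2k+2}=e^{-il}$. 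The step from (ii2) at level $2k+1$ to (ii1) at level $2k+2$ is the mirror image on $A_{2k+2}$: the unique vertex $e^{il}$ fans out into $e^{i(l+l)}=e^{2il}$ and $e^{i(l-l)}=1$ (distinct since $l\ne\pi$), the remaining $2k+1$ points rotate rigidly and land in the open arc $(2l,2\pi-\alpha_{2k+2})$ — disjoint from $\{1,e^{2il}\}$ — while the fanning branches $e^{i(l\pm W^-)}$ stay clear of them; this yields (ii1) at level $2k+2$ with $P^{2k+2}_1=1$, $P^{2k+2}_2=e^{2il}$, $P^{2k+2}_{2k+3}=e^{i(-l-W_{\rho^{2k+1},\rho^{2k+2}})}$ and $\arg(P^{2k+2}_{2k+3})<2\pi-\alpha_{2k+2}$. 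In particular the cardinality increases by exactly one at each step, which proves (ii).

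For (i), I would note that $A_i$ is a functional of the post-$\rho^{i-1}$ increments of $W$ up to time $\rho^i$, and $\rho^i=\rho_{\rho^{i-1}}$ is itself determined by those increments; hence by the strong Markov property at $\rho^{i-1}$ and the stationarity and independence of increments, $\mathbb P(C_n)=\prod_{i=1}^n\mathbb P(A_i)$. Each factor is positive: $\mathbb P(A_1)=\mathbb P(W^+_{0,\rho_0}=l)=\frac{1}{2}$ by the symmetry $W\overset{law}{=}-W$, while for $i\ge2$ a path realizing $A_i$ is obtained by letting a fresh Brownian motion descend to a level inside the prescribed band and then climb back past $l$ minus that level while remaining in a thin tube, and such tube events have positive probability exactly as in Lemma \ref{zz}. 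The only genuinely technical point left is the genericity of the mass-splits $U^+,U^-$ at the vertices: one needs them in $(0,1)$, which holds a.s. on $C_n$ when $m^\pm$ charge no mass to $\{0,1\}$, and in general one reads $C_n$ as further intersected with the events $\{U^+\in(0,1)\}$, $\{U^-\in(0,1)\}$ at the relevant vertices — these are independent of the $W$-increments and have probability $m^\pm((0,1))>0$ because $m^\pm\ne\frac{1}{2}(\delta_0+\delta_1)$, so $\mathbb P(C_n)>0$ is unaffected.

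The heart of the proof — and its only real difficulty — is the geometric bookkeeping of the inductive step: one must verify that over each interval $[\rho^n,\rho^{n+1}]$ no two support points merge, i.e. that no rigidly rotating point is absorbed at a vertex, that the two points produced at the fanning vertex are distinct and distinct from every rotated point, and that the rotating cloud never wraps around the circle. This is precisely what dictates the choice of a decreasing sequence $(\alpha_k)_{k\ge1}$ with $\alpha_1<\inf(l,2(\pi-l))$, and carrying it out reduces to a small number of elementary arc-length inequalities, checked separately at the two parities, while simultaneously keeping track of the exact argument of the extreme point $P^n_{n+1}$ so that the induction hypothesis propagates. The hypothesis $m^\pm\ne\frac{1}{2}(\delta_0+\delta_1)$, which guarantees genuine fanning at the vertices, is a minor but essential side point, and the degenerate case $l=\pi$ (treated separately in Section \ref{yiu}) behaves differently.
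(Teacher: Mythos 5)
Your proof takes essentially the same route as the paper's: an alternating-parity induction across the windows $[\rho^n,\rho^{n+1}]$, using the explicit kernel form from Section~\ref{ss} to see that the unique vertex in the support fans into two points while the $n$ non-vertex points rotate rigidly, with the $\alpha_k$-margins ruling out absorption of a rotating point at a vertex and ruling out the fanning branches overtaking the cloud; and (i) follows from independence of the $A_i$ plus a positive-probability path construction. Two remarks. For (i), the paper isolates the positivity statement as Lemma~\ref{kar} (about $\{W^-_\rho=l,\ \alpha<\sup_{u\le\rho}W_u<\beta\}$); you point instead to Lemma~\ref{zz}, which proves a different fact, so strictly speaking you still need the tube construction of Lemma~\ref{kar}, although the method is indeed that of Lemma~\ref{zz}. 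More substantively, you correctly flag a point the paper elides: the fanning vertex contributes two support points only when $U^{\pm}_{\rho^k,\rho^{k+1}}\in(0,1)$, and the hypothesis $m^{\pm}\neq\frac{1}{2}(\delta_0+\delta_1)$ does not preclude $m^{\pm}(\{0,1\})>0$. Since the paper's $C_n$ are $\sigma(W)$-events and $U^{\pm}_{\rho^k,\rho^{k+1}}$ is independent of $W$ with law $m^{\pm}$, conclusion (ii) can then fail on a positive-probability subset of $C_n$. Your repair---intersect $C_n$ with the finitely many independent events $\{U^{\pm}\in(0,1)\}$ at the relevant levels, each of probability $m^{\pm}((0,1))>0$ because $m^{\pm}\neq\frac{1}{2}(\delta_0+\delta_1)$ forces $m^{\pm}((0,1))>0$---preserves positivity and is exactly what is needed for Proposition~\ref{nahi}.
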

\noindent To prove this proposition, let us first establish the following
\begin{lemma}\label{kar}
Fix $0<\alpha<\beta<l$ and define $$E=\{W^-_{\rho}=l,\; \alpha<\displaystyle\sup_{0\leq u\leq \rho}W_{u}<\beta\}$$ where $\rho=\inf\{r\geq 0: \sup(W^{+}_{r},W^{-}_{r})=l\}$. Then $\mathbb P(E)>0.$
\end{lemma}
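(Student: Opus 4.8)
The plan is to reduce the lemma to exhibiting a single explicit event of positive probability on which $E$ holds, exactly in the spirit of the proof of Lemma \ref{zz}. Observe first that $\rho=\gamma^+_l\wedge\gamma^-_l$ and that $W^-_\rho=l$ happens precisely on $\{\gamma^-_l<\gamma^+_l\}$, an event of probability $\tfrac12$; what the symmetry argument does not control is the extra requirement $\sup_{u\in[0,\rho]}W_u\in\,]\alpha,\beta[$, so the path of $W$ must be pinned down more precisely. I would therefore build the event by hand from the $\eta$-step skeleton of $W$ used in Lemma \ref{zz}: fix a small $\eta>0$, put $R_0=0$ and $R_{i+1}=\inf\{r\ge R_i:|W_r-W_{R_i}|=\eta\}$, and recall that by the strong Markov property and the symmetry of Brownian motion the increments $W_{R_i}-W_{R_{i-1}}$ are i.i.d., equal to $\pm\eta$ with probability $\tfrac12$ each, and independent of the past.

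Next I would choose the parameters so that $p$ consecutive ``up'' increments bring $W$ to a level in $\,]\alpha,\beta[$ and $q$ subsequent ``down'' increments then force $\sup W-W$ to reach $l$. Concretely, take $\eta<\tfrac14\min(\beta-\alpha,l-\beta)$, $p=\lfloor\alpha/\eta\rfloor+2$ and $q=\lceil l/\eta\rceil+1$, and let $B$ be the event that the first $p$ skeleton increments equal $+\eta$ and the next $q$ equal $-\eta$; then $\mathbb P(B)=2^{-(p+q)}>0$ as in Lemma \ref{zz}. The crux is to verify $B\subseteq E$. On $B$ one has $W_{R_i}=i\eta$ for $0\le i\le p$ and $W_{R_{p+j}}=(p-j)\eta$ for $0\le j\le q$, and on each step $W$ stays within $\eta$ of the left-endpoint value, which yields explicit two-sided bounds for $W_r$, $\sup_{u\le r}W_u$ and $\inf_{u\le r}W_u$ on every step. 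From these I would read off that on the up-phase $W^+_r\le(p+1)\eta$, $W^-_r\le2\eta$, $\sup_{u\le R_p}W_u=p\eta$ and $W^-_{R_p}=0$; that on the down-phase the running supremum stays in $[p\eta,(p+1)\eta[$ (it can only climb, by at most $\eta$, during the very first down-step, and stays below $p\eta$ afterwards), while $W^+_r\le(p+2)\eta$ during the first $p$ down-steps and $W^+_r\le2\eta$ afterwards; and that $W^-_{R_{p+q}}=\sup_{u\le R_{p+q}}W_u-(p-q)\eta\ge q\eta>l$, so the continuous process $W^-$, which equals $0$ at $R_p$, crosses $l$ at some time $\rho\in\,]R_p,R_{p+q}[$. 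By the choice of $\eta$ one has $(p+1)\eta<\beta$ and $(p+2)\eta<l$, hence $W^+_r<l$ for all $r\in[0,R_{p+q}[\,\supseteq[0,\rho]$; therefore this $\rho$ really is $\inf\{r\ge0:\sup(W^+_r,W^-_r)=l\}$, it satisfies $W^-_\rho=l$, and $\alpha<p\eta\le\sup_{u\le\rho}W_u\le\sup_{u\le R_{p+q}}W_u<(p+1)\eta<\beta$. Thus $B\subseteq E$ and $\mathbb P(E)\ge2^{-(p+q)}>0$.

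The part I expect to be the main obstacle is the bookkeeping for $W^+_r=W_r-\inf_{u\le r}W_u$ at the transition from the up-phase to the down-phase: there the running infimum is still $-\eta$ (it has not yet begun to decrease), while $W_r$ can be as large as roughly $p\eta$ because of the overshoot in the first down-step, so one has to make sure the resulting bound $W^+_r\le(p+2)\eta$ stays strictly below $l$ — which is exactly what forces $\eta<\tfrac14(l-\beta)$ together with the estimate $p\eta\le\alpha+2\eta$. Once $W$ has genuinely started dropping, $\inf_{u\le r}W_u$ tracks $W_r$ to within $\eta$ and $W^+$ becomes harmlessly small. A secondary point to watch is that the running supremum is attained during the up-phase and can only be nudged up by at most $\eta$ during the first down-step, so the bound $\sup_{u\le\rho}W_u<\beta$ is not lost; all of this is a routine, if slightly tedious, inspection of the skeleton values and the $\eta$-oscillation bounds on each excursion between consecutive $R_i$'s.
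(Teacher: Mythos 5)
Your proof is correct, but it follows a genuinely different route from the paper's. The paper constructs a positive-probability sub-event of $E$ directly out of hitting times of explicit levels: it sets
$F=\{T_{\alpha}<T_{\beta-l}<T_{\beta}\}\cap\{\textrm{after }T_{\beta-l},\ W\textrm{ reaches }\alpha-l\textrm{ before }\beta-l+\alpha\}$,
observes $\mathbb P(F)>0$ by the strong Markov property at $T_{\beta-l}$, and then reads off the three required facts on $F$: the range $\sup-\inf$ stays $<l$ up to $T_{\beta-l}$ and reaches $l$ by $T_{\alpha-l}$, so $\rho\in(T_{\beta-l},T_{\alpha-l}]$; the running supremum lies in $]\alpha,\beta[$ there; and $W^+_\rho<\beta<l$, forcing $W^-_\rho=l$. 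That is a four-line event and the verification is short because the levels are chosen exactly so that the range and $W^+$ bounds drop out immediately. You instead extend Lemma~\ref{zz}'s $\eta$-skeleton construction, forcing $p$ up-steps followed by $q$ down-steps and tracking $W^\pm$ and the running extrema within $\eta$-bands step by step. Your bookkeeping checks out: with $\eta<\frac14\min(\beta-\alpha,l-\beta)$, $p=\lfloor\alpha/\eta\rfloor+2$, $q=\lceil l/\eta\rceil+1$, one indeed has $p\eta>\alpha$, $(p+1)\eta\le\alpha+3\eta<\beta$, $(p+2)\eta\le\alpha+4\eta<l$ (since $\min(\beta-\alpha,l-\beta)\le l-\alpha$), so $W^+<l$ on all of $[0,R_{p+q}]$, the sup is trapped in $[p\eta,(p+1)\eta[\subset\,]\alpha,\beta[$, and $W^-$ crosses $l$ strictly between $R_p$ and $R_{p+q}$, giving $W^-_\rho=l$; the bound $W^+_r<\max((p-j+2)\eta,2\eta)$ on down-step $p+j$ also covers the range $j>p$ that you did not single out. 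What the paper's version buys is brevity — no choice of a mesh $\eta$, no case analysis on steps — while your version buys uniformity of method with Lemma~\ref{zz} and an entirely elementary, walk-based argument that avoids juggling the specific level $\beta-l+\alpha$. Either proof is acceptable; yours is longer but equally rigorous.
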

\begin{proof}
Recall the definition of $T_a$ from the begining of Section \ref{maths}. Consider the event
$$F=\{T_{\alpha}<T_{\beta-l}<T_{\beta}\}\cap{\{\textrm{after}\ T_{\beta-l}, W\ \textrm{reaches}\ \alpha-l\ \textrm{before}\ \beta-l+\alpha\}}.$$
Using the Markov property at time $T_{\beta-l}$, we have $\mathbb P(F)>0$. Note that $\rho$ can be expressed as 
$$\rho=\inf\{t\geq 0: \sup_{0\leq u\leq t}W_u-\inf_{0\leq u\leq t}W_u=l\}.$$

\noindent On $F$, we have $T_{\beta-l}<\rho\leq T_{\alpha-l}$ and so $\alpha<\displaystyle\sup_{0\leq u\leq \rho}W_{u}<\beta$. Moreover, on $F$

$$W^+_{\rho}=W_{\rho}-\inf_{0\leq u\leq t}W_u<\beta-l+\alpha-(\alpha-l)<l.$$
In other words $W^-_{\rho}=l$ which proves the inclusion $F\subset E$ and allows to deduce the lemma.
\end{proof}

\noindent\textbf{Proof of Proposition \ref{NES}} 
\textbf{(i)} The sequence $(A_i)_{i\geq 1}$ is independent and therefore we only need to check that $\mathbb P(A_n)>0$ for all $n\geq 1$. But this is immediate from Lemma \ref{kar} for $n$ even. By replacing $W$ with $-W$, it is also immediate for $n$ odd.\\ 
\textbf{(ii)} We denote the properties (ii1) and (ii2) respectively by $\mathcal P_{2k}$ and $\mathcal P_{2k+1}$. Let prove all the $(\mathcal P_{i})_{i\geq 0}$ by induction. First $\mathcal P_{0}$ and $\mathcal P_{1}$ are clearly satisfied since $K_{0,0}(1)=\delta_1$ and $\textrm{supp}\ K_{0,\rho^1}(1)=\{e^{il}, e^{-il}\}$ on $C_1$. Suppose that all the $\mathcal P_{i}$ hold for all $0\leq i\leq 2k-1$ where $k\geq 1$. On $C_{2k}$, $K_{\rho^{2k-1},t}(e^{-il})\neq \delta_1$ for all $t\in[\rho^{2k-1},\rho^{2k}]$ since for all $t\in ]\rho^{2k-1},\rho^{2k}]$, we have $$-W_{\rho^{2k-1},t}<W^-_{\rho^{2k-1},t}\leq l.$$
Moreover, on $C_{2k}$, we have
$$\inf _{\rho^{2k-1}\leq t\leq \rho^{2k}}\big(2l-W_{\rho^{2k-2},\rho^{2k-1}}-W_{\rho^{2k-1},t}\big)=l-W_{\rho^{2k-2},\rho^{2k-1}}-W_{\rho^{2k-1},\rho^{2k}}>l.$$
Thus for all $t\in[\rho^{2k-1},\rho^{2k}]$, we have $$K_{\rho^{2k-1},t}(P^{2k-1}_2)=e^{i(2l-W_{\rho^{2k-2},\rho^{2k-1}}-W_{\rho^{2k-1},t})}\neq e^{il}$$  
so that $\mathcal P_{2k}$ holds. Similarly, on $C_{2k+1}$, $K_{\rho^{2k},\cdot}(e^{2il})$ cannot reach $\delta_{e^{il}}$ before $\rho^{2k+1}$ since for all $t\in]\rho^{2k},\rho^{2k+1}]$, $$W_{\rho^{2k},t}<W^+_{\rho^{2k},t}\leq l.$$
Moreover, on $C_{2k+1}$,
$$\sup_{\rho^{2k}\leq u\leq \rho^{2k+1}}\big(2\pi-l-W_{\rho^{2k-1},\rho^{2k}}-W_{\rho^{2k},u}\big)=2\pi-(W_{\rho^{2k-1},\rho^{2k}}+W_{\rho^{2k},\rho^{2k+1}})<2\pi.$$
Thus, on $C_{2k+1}$, $K_{\rho^{2k},t}(P^{2k}_{2k+1})\neq \delta_1$ for all $t\in[\rho^{2k},\rho^{2k+1}]$ and $\mathcal P_{2k+1}$ easily holds.
\begin{remark}\label{rema}
When $m^+\neq m^-, m^-= \frac{1}{2}(\delta_{0}+\delta_1)$, by considering
$$E_{2i-1}=A_{2i-1}\quad \textrm{and}\quad E_{2i}=A_{2i}\cap\{K_{\rho^{2i-1},\rho^{2i}}(e^{il})=\delta_1\}\quad \textrm{for }\ i\geq1,$$
and then $F_n=\cap_{1\leq i\leq n}E_i$, we similarly show that $\text{supp}(K_{0,t}(1))$ may be sufficiently large with positive probability.
\end{remark}

\section*{Acknowledgement}
The first author is grateful to Yves Le Jan who suggested to him this problem as a part of his Ph.D.

\bibliographystyle{plain}
\bibliography{Bil}

\end{document}